\documentclass{amsart}
\usepackage[left=1in, right=1in, top=1in, bottom=1in]{geometry}

\usepackage[utf8]{inputenc}
\usepackage{amssymb}
\usepackage{mathtools}
\usepackage{derivative}
\usepackage{mathrsfs}
\usepackage{array, ltablex, multirow} % tabularx: auf Textbreite vergrößern
\usepackage{makecell}
\usepackage{amsmath}
\usepackage{amsthm}
\usepackage{url}
\usepackage{tikz-cd}
\usepackage{graphicx}
\usepackage{tabularx}
\usepackage{hyperref}% http://ctan.org/pkg/hyperref
\hypersetup{
    colorlinks=true,
    linkcolor=blue,
    filecolor=magenta,      
    urlcolor=cyan,
    pdftitle={Overleaf Example},
    pdfpagemode=FullScreen,
    }

\usepackage[T1]{fontenc}
\usepackage[utf8]{inputenc}
\usepackage{babel}
\usepackage{amsthm}
\usepackage{tikz}
\usepackage{xparse}
\let\realItem\item % save a copy of the original item
\makeatletter
\NewDocumentCommand\myItem{ o }{%
   \IfNoValueTF{#1}%
      {\realItem}% add an item
      {\realItem[#1]\def\@currentlabel{#1}}% add an item and update label
}
\makeatother
\usepackage{enumitem}
\setlist[enumerate]{
    before=\let\item\myItem,       % use \myItem in enumerate
    label=\textnormal{(\arabic*)}, % format the label
    widest=(2')                    % set the widest label
}

\usepackage{aliascnt}

\title{Stable Quadratic Polynomials over $\Qi$}
\author{Jermain McDermott}

\theoremstyle{plain}
\newtheorem{theorem}{Theorem}[section]
\newtheorem{lemma}[theorem]{Lemma}
\newtheorem{corollary}[theorem]{Corollary}
\newtheorem{proposition}[theorem]{Proposition}
\newtheorem{conjecture}[theorem]{Conjecture}
\newtheorem{definition}[theorem]{Definition}

\usepackage[noabbrev, capitalize]{cleveref}

\newtheorem*{remark}{Remark}
\usepackage{cleveref}
\crefname{theorem}{Theorem}{Theorems}
\crefname{lemma}{Lemma}{Lemmas}
\crefname{corollary}{Corollary}{Corollaries}
\crefname{proposition}{Proposition}{Propositions}
\crefname{definition}{Definition}{Definitions}
\crefname{conjecture}{Conjecture}{Conjectures}

\pagecolor{white}
\color{black}

\begin{document}

\newcommand{\U}{\mathscr{U}}
\newcommand{\Q}{\mathbb{Q}}
\newcommand{\Qi}{\mathbb{Q}(i)}
\newcommand{\Z}{\mathbb{Z}}
\newcommand{\Zi}{\mathbb{Z}[i]}
\newcommand{\R}{\mathbb{R}}
\newcommand{\C}{\mathbb{C}}
\newcommand{\K}{\overline{K}}
\newcommand{\F}{\mathbb{F}_p}
\newcommand{\Qx}{\mathbb{Q}[x]}
\newcommand{\Zx}{\mathbb{Z}[x]}
\newcommand{\Fx}{\mathbb{F}_p[x]}
\newcommand{\Cl}{\text{Cl}}
\newcommand{\Zm}{(\Z/m\Z)^\times}
\newcommand{\Zq}{(\Z/q\Z)^\times}
\newcommand{\Zp}{(\Z/p\Z)^\times}
\newcommand{\Gal}{\text{Gal}}
\newcommand{\I}{\langle i \rangle}
\newcommand{\ord}{\text{ord}}
\newcommand{\Real}{\text{Re}}
\newcommand{\Imag}{\text{Im}}

\newcommand{\genlegendre}[4]{%
  \genfrac{(}{)}{}{#1}{#3}{#4}%
  \if\relax\detokenize{#2}\relax\else_{\!#2}\fi
}
\newcommand{\legendre}[3][]{\genlegendre{}{#1}{#2}{#3}}
\newcommand{\dlegendre}[3][]{\genlegendre{0}{#1}{#2}{#3}}
\newcommand{\tlegendre}[3][]{\genlegendre{1}{#1}{#2}{#3}}

\newcommand{\genglegendre}[4]{%
  \genfrac{[}{]}{}{#1}{#3}{#4}%
  \if\relax\detokenize{#2}\relax\else_{\!#2}\fi
}
\newcommand{\glegendre}[3][]{\genglegendre{}{#1}{#2}{#3}}
\newcommand{\dglegendre}[3][]{\genglegendre{0}{#1}{#2}{#3}}
\newcommand{\tglegendre}[3][]{\genglegendre{1}{#1}{#2}{#3}}

\begin{abstract}

 %Let $f$ be a polynomial or a rational function over a field $K$. A basic question is, if $f$ is a polynomial, are its iterates irreducible or not? 
 We study iterates of a quadratic $f= x^2+1/c\in K[x]$. If the number of factors of $f^n:=f\circ f \circ ... \circ f$ is bounded by a constant independent of $n$, then $f$ is said to be \emph{eventually stable}. This paper is an extension to $\Qi$ of the paper \cite{evstb},  which considered $f$ over $\mathbb{Q}$. The conjecture "if $f^2$ is irreducible, then $f^n$ is irreducible for all $n$" extends to $\Qi$, but due to the lack of a linear ordering on $\Qi$, an auxiliary function is involved in a specific $n$ to check. The elusive case of $c\equiv 2 \bmod 4$ (as a $\Z$ equivalence class) is shown to be "stable" over $\Qi$, offering more evidence for \cite[Conjecture 1]{evstb}. Stability for $c\equiv 1\bmod 2$ (as a $\Zi$ equivalence class) is not as fully handled as over $\Z$, however. \\

\end{abstract}

\maketitle

\section{Introduction}
\quad Suppose $K$ is a field with algebraic closure $\K$. Let $f(x)=x^2+r\in K[x]$, and denote its iterates by  $$f^n(x):=(f\circ f \circ ... \circ f)(x) \text{  ($n$ compositions) } \quad \text{ with  } f^0(x)=x.$$ 

%In this paper we study the arithmetic dynamics of quadratic polynomials $f(x)=x^2+r\in K[x]$ under iteration, i.e. $$f^n(x)=(f\circ f \circ ... \circ f)(x) \text{  ($n$ compositions) } \quad \text{ with  } f^0(x)=x.$$ 

Fix $\alpha\in K$ and denote by $f^{-n}(\alpha):=\{\beta\in \K: f^n(\beta)=\alpha\}.$ If $f^n(x)-\alpha$ is separable, the disjoint union $T_{f,\infty}(\alpha):=\{\alpha\}\sqcup f^{-1}(\alpha) \sqcup f^{-2}(\alpha) \sqcup \cdots $ is a rooted tree with root vertex $\alpha$, and edges assigned according to the action of $f$. Denote by $\text{Aut}(T_f(\alpha))$ the tree automorphisms of $T_f(\alpha)$. Since the natural action of the Galois group $G_K:=\Gal(\K/K)$ on $f^{-n}(\alpha)$ commutes with $f\in K[x]$, we obtain a homomorphism $\Gal(\K/K)\to \text{Aut}(T_f(\alpha))$ called the arboreal Galois representation associated to $(f,\alpha)$. One of the central problems in arithmetic dynamics is whether the image of this homomorphism has finite index in $\text{Aut}(T_f(\alpha))$; a dynamical parallel of the Serre open image theorem. This question has generated a large literature, see \cite{currenttrends} for a summary of this and other current research in the field. \\

In this article, we attempt to characterize factorizations of $f^n(x)-\alpha$ for $\alpha=0$.

%Definition 1
\begin{definition}
Let $K$ be a field,  $f\in K[x]$, and $\alpha\in K$. For $n\ge 1$ let $k_n$ denote the number of irreducible factors of $f^n(x)-\alpha$ over $K$. We call the pair $(f,\alpha)$ (resp. $f$) $\textbf{eventually stable over $K$}$ if there is a constant $C(f,\alpha)$ (resp. $C(f, 0)$) such that for all $n$, $$k_n\le C(f,\alpha).$$
The pair $(f,\alpha)$ is called \textbf{stable over $K$} if $C(f,\alpha)=1$, i.e. $f^n(x)-\alpha$ is irreducible over $K$ for all $n\ge 1$.
%We call $f$ is eventually stable over K if the pair (f,0) is eventually stable over K.\\

%We call the pair $(f,\alpha)$ (resp. $f$) \textbf{stable over K} if $C(f,\alpha)=1$ (resp. $C(f,0)=1$), i.e. when $f^n(x)-\alpha$ (resp. $f^n$) is irreducible over $K$ for all $n$.
\end{definition}

%A conjecture of Jones and Levy posits that if $f(x)\in K(x)$ is a rational function and $\alpha$ is not periodic under $f$, then $(f,\alpha)$ is eventually stable.\\

Eventual stability is applied in some finite-index results for certain arboreal Galois representations \cite{18finite,17finite}, and is equivalent to a bound independent of $n$ on the number of Galois orbits on $f^{-n}(\alpha)$ \cite[Proposition 2.2]{ratstb}. However, this is a straightforward consequence of the image of the arboreal representation associated to $(f,\alpha)$ having finite index, so finite index of the arboreal representation implies eventual stability. The reverse implication is much less clear. Other applications include finiteness of $S$-integer points in backwards orbits; see \cite[Section 3]{ratstb} for a discussion of these and other results concerning the topic. \\

The following theorem of Hamblen, Jones, and Madhu \cite{orbits} establishes eventual stability of a large class of polynomials over number fields (using ideas resembling Eisenstein's criterion):
\begin{theorem}
\label[theorem]{evstbthm} (Hamblen, Jones and Madhu, \cite[Theorem 5]{orbits})
Let $d\ge 2$, let $K$ be a field of characteristic not dividing $d$, and let $f(x)=x^d+r\in K[x]$. If there is a discrete non-archimedean absolute value on $K$ with $|r|<1$, then $f$ is eventually stable over $K$.
\end{theorem}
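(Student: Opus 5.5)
The plan is an Eisenstein-type argument carried out in the completion with respect to the given absolute value. Let $v$ be the associated discrete valuation on $K$, normalized so that $v(r)=e\ge 1$. First I pass to the completion $K_v$. The number of irreducible factors of $f^n$ over $K$ is at most the number over $K_v$, so it suffices to bound the latter independently of $n$. Next I pass to a finite extension $L=K_v(\pi^{1/N})$ with $e_L\cdot$ stuff chosen so that the relevant valuations become integral. This changes the number of factors over $K_v$ by at most a factor of $[L:K_v]$, which is independent of $n$. So it suffices to bound the factor count over a fixed finite extension.

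The key computation is the valuations of the roots of $f^n$. If $f^n(\beta)=0$, then $\beta^d = -r + (\text{root of } f^{n-1})$ up to the recursion $f(\beta_{k})=\beta_{k-1}$. By induction, each root $\beta$ of $f^{n}$ satisfies $v(\beta)=e/d$ for $n\ge 1$. Indeed, $\beta_1^d=-r$ gives $v(\beta_1)=e/d$. Then $\beta_k^d=\beta_{k-1}-r$, where $v(\beta_{k-1})=e/d<e$, so $v(\beta_k)=e/d^2$. Continuing, $v(\beta_k)=e/d^k$.

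So the roots of $f^n$ have valuation $e/d^n$, and the Newton polygon of $f^n$ is a single segment of slope $e/d^n$ over degree $d^n$. Equivalently, consider $g_n(x)=f^n(x)$. One has $f^n(x)=f^{m}(f^{n-m}(x))$. Fix $m$ with $d^m>e$, say $m$ minimal with $\gcd$-type condition. Then $f^n = f^{n-m}$ evaluated through $f^m$, and I would show that for $n\ge m$ the polynomial $f^n$ has a root $\beta$ with $v(\beta)=e/d^n$. Writing $e/d^n$ in lowest terms, the denominator is $d^n/\gcd(e,d^n)$, and $\gcd(e,d^n)$ is bounded by $e$. The ramification index of $K_v(\beta)/K_v$ is therefore at least $d^n/e$. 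Hence every irreducible factor of $f^n$ over $K_v$ has degree at least $d^n/e$. Since all roots have this valuation, this applies to every factor, so the number of factors is at most $e$.

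This removes the need for the extension $L$. The bound is $k_n\le e=v(r)$, uniformly in $n$.

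The main obstacle is the valuation induction, specifically the claim $v(\beta_{k-1}-r)=v(\beta_{k-1})$. This needs the strict inequality $e/d^{k-1}<e$, which holds for $k\ge2$. For $k=1$ the value is simply $v(-r)=e$. The characteristic hypothesis ensures that $f^n$ is separable, so that the roots are genuine and the tree picture is valid; the valuation argument itself does not need it. The final step is the standard fact that a root with valuation denominator $D$ in lowest terms generates an extension with ramification index divisible by $D$. This gives the degree lower bound and hence $k_n\le v(r)$.
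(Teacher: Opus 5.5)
Your argument is correct. It is the generalized-Eisenstein (Newton polygon) argument that the paper alludes to; the paper gives no proof, only citing Hamblen--Jones--Madhu and describing their method as ``using ideas resembling Eisenstein's criterion.'' Every root of $f^n$ has valuation $e/d^n$. Normalize $v$ so that $v(K_v^\times)=\mathbb{Z}$; this normalization, not merely ``$v(r)=e$'', is what the denominator argument needs. Then the inequality $e(L/K_v)\le [L:K_v]$ forces each irreducible factor over $K_v$ to have degree at least $d^n/\gcd(e,d^n)$, so $k_n\le \gcd(e,d^n)\le e$.

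In a write-up, delete the abandoned detours (the extension $L$ and the paragraph choosing $m$) and correct the slip $v(\beta)=e/d$. Also note that you implicitly assume $r\neq 0$, so that $e<\infty$.
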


The following corollary is an extension of \cite[Corollary 6]{orbits} considered by DeMark et al.:
\begin{corollary}
    Let $f(x)=x^d+r\in \Qi[x]$, and suppose that $r$ is non-zero and is not the reciprocal of an element of $\Zi$. Then $f$ is eventually stable over $\Qi$. 
\end{corollary}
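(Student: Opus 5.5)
The plan is to reduce the corollary directly to \cref{evstbthm}. I will exhibit a discrete non-archimedean absolute value $|\cdot|$ on $\Qi$ with $|r|<1$, and apply the theorem with $K=\Qi$, which has characteristic $0$ and hence characteristic not dividing $d$. The natural candidates are the $\pi$-adic absolute values $|x|_\pi = N(\pi)^{-v_\pi(x)}$, where $\pi$ ranges over the Gaussian primes of $\Zi$. The construction relies on $\Zi$ being a principal ideal domain, in fact a UFD. Each such $|\cdot|_\pi$ is discrete and non-archimedean, and $|r|_\pi<1$ holds exactly when $v_\pi(r)>0$. So it suffices to find a Gaussian prime $\pi$ with $v_\pi(r)\ge 1$.

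Write $r=a/b$ with $a,b\in\Zi$ coprime; this is possible because $\Zi$ is a UFD, and $a\neq 0$ since $r\neq 0$. I will show that $a$ is not a unit. Suppose instead that $a$ is a unit $u\in\{\pm1,\pm i\}$. Then $r=u/b=1/(u^{-1}b)$, and $u^{-1}b\in\Zi$. This says $r$ is the reciprocal of an element of $\Zi$, contradicting the hypothesis. Hence $a$ is a nonzero non-unit.

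Since $a$ is a nonzero non-unit, it has a Gaussian prime divisor $\pi$. By coprimality, $\pi\nmid b$. Therefore $v_\pi(r)=v_\pi(a)-v_\pi(b)=v_\pi(a)\ge 1$, which gives $|r|_\pi<1$. \cref{evstbthm} then yields that $f$ is eventually stable over $\Qi$.

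I do not expect a real obstacle here. The only points needing care are that $r\ne0$, so that $a$ has a prime factor at all, and the unit-absorption step, which uses the fact that the units of $\Zi$ lie in $\Zi$. Both are routine.
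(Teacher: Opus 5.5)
Your proposal is correct and matches the intended argument: the paper gives no separate proof, presenting the corollary as the $\Zi$ analogue of \cite[Corollary 6]{orbits}, which follows from \cref{evstbthm} by choosing a Gaussian prime $\pi$ with $v_\pi(r)>0$ (found, as you do, from a prime factor of the numerator of $r$ in lowest terms). Note that \cref{evstbthm} needs $d\ge 2$, which is implicit here; for $d=1$ every iterate is linear, so the claim holds trivially.
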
 

This paper is concerned with the case when $d=2$ and a non-zero $r\neq -1$ \emph{is} a reciprocal of an element of $\Zi$. In this case, "Eisenstein-type" methods break down. The following conjecture attempts to fully describe the situation. It is an extension of \cite[Conjecture 1.7]{evstb} which considered $r\in \Q$, but negatives have turned into squares, while the $c$-factorization $4m^2(m^2-1)$ for $f^2-$reducibility over $\Z$ expands to $c=\alpha^2(2i-\alpha^2)$:

%Conjecture 1.7
\begin{conjecture}
\label[conjecture]{conj1.7}
%(Demark-Hindes-Jones-Misplon-Stoll-Stoneman, \cite[Conjecture 1.7]{evstb}) 
Let $f_r=x^2+r$ with $r=1/c$ for $c\in \Zi\backslash\{0,-1\}$, and let $k_n$ denote the number of irreducible factors of $f_r^n(x)$. Then $f_r$ is eventually stable over $\Qi$ with constant $C(f_r,0)=4$:
\begin{enumerate}
\item If $c=\alpha^2$ with $1\pm i\alpha\in \Zi\backslash\Zi^2$ then $k_n=2$ for all $n\ge 1$.\label{itm:m2}
\item If $c\in \{\pm 8i, -16\}\subset \Zi^2$, then $k_1=k_2=2$, $k_n=3$ for all $n\ge 3$.\label{itm:8i}
\item If $c= (i\sigma^2-i)^2$ for $\pm \sigma \in \Zi\backslash\{3,5,56\}$ then $k_1=2$ and $k_n=3$ for all $n\ge 2$.\label{itm:neq3556}
\item If $c= (is^2 - i)^2$ for $\pm s\in\{3,5,56\}$ then $k_1=2$, $k_2 =3$ and $k_n=4$ for all $n\ge 3$. \label{itm:3556}
\item If $c\neq 48$ and $c=\alpha^2(2i-\alpha^2)$ for some $\alpha \in \Zi$, then $k_1 =1$ and $k_n=2$ for all $n\ge 2$.\label{itm:neq48}
\item If $c=48$, then $k_1=1, k_2=2$, and $k_n=3$ for all $n\ge 3$. \label{itm:48}
\item If $c\in \Zi\backslash\Zi^2$ and $c\neq \alpha^2(2i-\alpha^2)$ for any $\alpha \in \Zi$ (i.e. $c$ is not in an above case), then $k_n=1$ for all $n\ge 1$. \label{itm:irred}
\end{enumerate}
\end{conjecture}

%The proofs of cases \ref{itm:m2}-\ref{itm:48} (up to $N_{\C/\R}(c)\le10^9$) are the $\Zi$ analogue of \cite[Theorem 1.5]{evstb}). However, the proofs mimic their analogue in \cite{evstb} or use one of the strategies seen there, so they are omitted here for brevity. \\

Case \ref{itm:neq3556} and \ref{itm:3556} are where $c=\alpha^2$ with $1\pm i\alpha \in \Zi^2$.  Case \ref{itm:irred} is precisely the case where $f^2_r(x)$ is irreducible and thus case \ref{itm:irred} asserts that if $f^2_r(x)$ is irreducible, then $f^n_r(x)$ is irreducible for all $n\ge 1$. This is the main focus of this paper, and we state this as a $\Zi$ analogue of \cite[Conjecture 1.8]{evstb}:

%Conjecture 1.8
\begin{conjecture}
\label[conjecture]{conj1.8}
%(Demark-Hindes-Jones-Misplon-Stoll-Stoneman, \cite[Conjecture 1.8]{evstb} )\\
Let $f_r(x)=x^2+r$ with $r=1/c$ for $c\in \Zi \backslash \{0,-1\}$. If $f^2_r (x)$ is irreducible over $K$, then $f$ is stable over $K$.
\end{conjecture}

% The main result of this paper is a $\Zi$ analogue of \cite[Theorem 1.2]{evstb}:

% \begin{theorem}\label{mainthm}
% %(Demark-Hindes-Jones-Misplon-Stoll-Stoneman, \cite[Theorem 1.2]{evstb} )\\
% Let $K=\Qi$ and $f_r(x)=x^2+r$ with $r=1/c$ for $r=1/c$ for $c\in \Zi\backslash\{0,-1\}$. If $N(c)\le 10^9$, then $f_r$ is eventually stable over $\Qi$ and $C(f_r,0)\le 4$. In particular, \cref{conj1.7} above holds for all $c$ with $N(c)\le 10^9$.
% \end{theorem}

% %Theorem 3
% \begin{theorem}\label{thm1.5}
% %(Demark-Hindes-Jones-Misplon-Stoll-Stoneman, \cite[Theorem 1.5]{evstb})\\
% Let $f_r=x^2+r$ with $r=1/c$ for $c\in \Zi\backslash\{0,-1\}$, and let $k_n$ denote the number of irreducible factors of $f_r^n(x)$. Then,
% \begin{enumerate}
% \item[(1)] We have $k_1=k_2=2$ and $k_3 =3$ if and only if $c\in \{\pm 8i, -16\}$. In this case $k_n=3$ for all $n\ge 3$.
% \item[(2)] We have $k_3 \ge 4$ if and only if $c=-(s^2-1)^2$ for $\pm s\in \{3, 5, 56\}$. In this case $k_n=4$ for all $n\ge 3$.
% %\item[(3)] Suppose $N(c)\le 10^9$. We have $k_1=1$ and $k_3\ge 3$ if and only if $c=48$. In this case, and $k_2 =2$ and $k_n=3$ for all $n\ge 3$.
% \end{enumerate}
% \end{theorem}

The determination of squares in the sequence $\{f_r^n(0)\}$ is of central importance in this study. To this end, we use tools such as the non-linear recurrence relation $$a_1=1, \quad a_n(c)=c^{2^{n-1}-1}+a_{n-1}^2(c) \quad \text{for } n\ge 2$$ describing the numerator of $f_r^n(0)=\frac{a_n}{c^{2^{n-1}}}$ for $n\ge 1$ (note $c^{2^{n-1}}$ is a square when $n\ge 2$).\\

We prove another irreducibility test involving the $a_n$ sequence. 
%Lemma1.10
\begin{lemma}
\label[lemma]{lem1.10}
%(Demark-Hindes-Jones-Misplon-Stoll-Stoneman, \cite[Lemma 1.10]{evstb} )
Suppose that $c\in\Zi\backslash\{0\}$, $r=1/c$ and $f_r^2$ is irreducible. Let $a_n=a_n(c)$ be defined as above and set
\begin{equation}
\label{bn}
b_n^\pm (c)=i(a_{n-1}\pm \sqrt{a_n})\in \bar{\Q}.
\end{equation}
If $b_n^\pm(c)$ is not a square in $\Zi$ (which holds in particular if $a_n(c)$ is non-square in $\Zi$) for all $n\ge 3$, then $f_r^n(x)$ is irreducible for all $n\ge 1$.
\end{lemma}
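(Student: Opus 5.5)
The plan is to proceed by induction on $n$, using the standard Capelli-type criterion for iterates of $x^2+r$. The cases $n=1,2$ are covered by the hypothesis that $f_r^2$ is irreducible (so $f_r$ is as well). For the inductive step, suppose $f_r^{n-1}$ is irreducible with $n\ge 3$. Let $\beta$ be a root of $f_r^{n-1}$ and $K=\Qi$. Then $f_r^n(x)=f_r^{n-1}(x^2+r)$ is irreducible if and only if $x^2+r-\beta$ is irreducible over $K(\beta)$, that is, if and only if $\beta-r$ is not a square in $K(\beta)$. So assume for contradiction that $\beta-r=\delta^2$ with $\delta\in K(\beta)$.

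Next I would descend one level rather than taking the norm straight down to $K$; this is what produces $b_n^\pm$ rather than only $a_n$. Put $\gamma=f_r(\beta)=\beta^2+r$, a root of the irreducible $f_r^{n-2}$, so that $[K(\beta):K(\gamma)]=2$ with basis $1,\beta$. Write $\delta=u+v\beta$ with $u,v\in K(\gamma)$ and expand:
\[
\delta^2=u^2+v^2(\gamma-r)+2uv\beta .
\]
Comparing coefficients gives $2uv=1$ and $u^2+v^2(\gamma-r)=-r$. Eliminating $v$ yields
\[
4u^4+4ru^2+(\gamma-r)=0,
\]
so $2u^2=-r\pm e$ with $e^2=r^2+r-\gamma\in K(\gamma)$, and in particular $e\in K(\gamma)$.

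Two consequences follow. First, taking norms from $K(\gamma)$ to $K$ of the square $e^2$ gives $N(e^2)=\prod_j(r^2+r-\gamma_j)=f_r^{n-2}(r^2+r)=f_r^n(0)=a_n/c^{2^{n-1}}$. Since $c^{2^{n-1}}$ is a square, $a_n$ must be a square in $K$, and hence in $\Zi$ because $\Zi$ is integrally closed. This already gives the parenthetical ``in particular'' claim, and it fixes $\sqrt{a_n}$ with $N(e)=\pm\sqrt{a_n}/c^{2^{n-2}}$. Second, $-r\pm e=2u^2$, and $[K(\gamma):K]=2^{n-2}$ is even for $n\ge 3$, so $N(2u^2)=2^{2^{n-2}}N(u)^2$ is a square in $K$. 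Therefore $N_{K(\gamma)/K}(-r\pm e)$ is a square in $K$.

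The main obstacle is computing $N_{K(\gamma)/K}(-r\pm e)$ explicitly in terms of $a_{n-1}$ and $\sqrt{a_n}$. I would first find the minimal polynomial of $e$ over $K$. Since $e^2=r^2+r-\gamma$, the element $e$ is a root of $f_r^{n-2}(r^2+r-x^2)$. Rewriting $r^2+r-x^2$ as $f_r$ applied to a suitable argument should express this polynomial through $f_r^{n-1}$, possibly after a twist by $i$: this is exactly where $i$ enters, since $-x^2=(ix)^2$ in $\Qi$. Using that this polynomial has degree $2^{n-1}$ while $e$ has degree at most $2^{n-2}$, it should split into two conjugate factors $g_\pm$ of degree $2^{n-2}$ over $K$ that differ by the sign of $\sqrt{a_n}$. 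Evaluating $g_\pm$ at $r$ then gives the desired norm, and I expect it to equal $b_n^\pm(c)=i(a_{n-1}\pm\sqrt{a_n})$ up to a square factor such as a power of $c^{2}$.

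Granting that identity, a square $N(-r\pm e)$ forces $b_n^\pm(c)$ to be a square in $K$. Since $b_n^\pm(c)\in\Zi$ once $a_n$ is a square, it is then a square in $\Zi$, contradicting the hypothesis. This completes the induction. If the exact normalization is hard to pin down, the fallback is to check it first for $n=3$, where $\gamma$ is a root of $f_r$ and the computation is explicit, and then generalize using $f_r^{n-1}(0)=a_{n-1}/c^{2^{n-2}}$.
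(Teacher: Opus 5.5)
There is a genuine gap. Your reduction is correct through both "consequences". The Capelli step, the descent to $K(\gamma)$, $2uv=1$, $2u^2=-r\pm e$ with $e\in K(\gamma)$, $a_n\in\Zi^2$, and $N_{K(\gamma)/K}(-r\pm e)$ being a square in $K$ are all fine.

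The problem is the step you defer, and it fails as stated: $N_{K(\gamma)/K}(-r\pm e)$ is \emph{not} $b_n^\pm(c)$ up to a square factor. Take $n=3$, so $\gamma=\omega$ with $\omega^2=-r$, and write $e=x+y\omega$ with $x,y\in K$. Then $2xy=-1$ and $x^2-ry^2=f_r^2(0)$, and one computes
\[
N_{K(\gamma)/K}(-r\mp e)=(r\pm x)^2+ry^2=2x^2\pm 2rx-r,\qquad 2\bigl(f_r^2(0)+N_{K(\gamma)/K}(e)\bigr)=(2x)^2 .
\]
The second identity says that the $b_3$ matching the sign of $N(e)=\pm\sqrt{a_3}/c^2$ is \emph{automatically} a square once $e\in K(\gamma)$. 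By contrast, $2x^2\pm 2rx-r$ need not be a square at all. For example, with $x=1$ and $4r^2+5r-4=0$ in $\mathbb{Q}(\sqrt{89})$ these are $2+r$ and $2-3r$, whose norms $1/2$ and $5/2$ are not squares. So no formal computation with the minimal polynomial of $e$ can produce the identity you expect.

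The heuristic for that minimal polynomial also fails. Since $r^2-x^2$ is not the square of a linear form, $f_r^{n-2}(r^2+r-x^2)$ is not $f_r^{n-1}$ of anything linear. It factors as $h(x)h(-x)$, with $h$ the minimal polynomial of $e$. The two factors share the constant term $N(e)$, so they do not differ in the sign of $\sqrt{a_n}$.

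The fix is that the information is already contained in $e\in K(\gamma)$, but you must stop the norm of $e^2$ at the intermediate field $K_1=K(\sqrt{-r})=\Qi(\sqrt{r})$ instead of going down to $K$.
For $n\ge 3$ put $\omega=f_r^{n-3}(\gamma)$. Then $\omega^2=-r$, so $K_1=K(\omega)\subseteq K(\gamma)$. By degrees (using irreducibility of $f_r$ and $f_r^{n-2}$), $f_r^{n-3}(x)-\omega$ is the minimal polynomial of $\gamma$ over $K_1$. Hence
\[
N_{K(\gamma)/K_1}(e)^2=N_{K(\gamma)/K_1}(r^2+r-\gamma)=f_r^{n-3}(r^2+r)-\omega=f_r^{n-1}(0)-\omega .
\]
Writing this square as $(x+y\omega)^2$ with $x,y\in\Qi$ gives $2xy=-1$ and $x^2-ry^2=f_r^{n-1}(0)$. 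Therefore $(2x)^2=2\bigl(f_r^{n-1}(0)\pm\sqrt{f_r^n(0)}\bigr)$. Multiply by the square $c^{2^{n-2}}$ and use $2=-i(1+i)^2$ and $-1=i^2$. This shows $b_n^+(c)$ or $b_n^-(c)$ is a square in $\Qi$, hence in $\Zi$, which is the contradiction you want.

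This is exactly the route the paper imports from \cite[Lemma 3.2]{evstb}. It factors $f_r=(x-i\sqrt{r})(x+i\sqrt{r})$ over $\Qi(\sqrt{r})$, applies the Capelli criterion there to $f_r^{n-1}(x)\pm i\sqrt{r}$, and tests whether $f_r^{n-1}(0)\pm i\sqrt{r}$ is a square in $\Qi(\sqrt{r})$. With this fix, your variable $u$ and the second consequence can be dropped entirely.
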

It is a $\Zi$ analogue of \cite[Lemma 1.10]{evstb}, but the lack of a linear ordering on $\Zi$ requires that we check that $b_n^-$ is non-square as well. This leads us to the following $\Zi$ extension of \cite[Conjecture 1.11]{evstb}:
%Conjecture 1.11
\begin{conjecture}
\label[conjecture]{conj1.11}
%(Demark-Hindes-Jones-Misplon-Stoll-Stoneman, \cite[Conjecture 1.11]{evstb} )\\
Let $b_n^\pm(c)$ be defined as in \ref{bn}. If $c\in \Zi\backslash\{0,-1\}$, then $b_n^\pm(c)$ is non-square in $\Zi$  for all $n\ge3$.
\end{conjecture}

The $\{b_n^\pm(c)\}$ square test of \cref{lem1.10} allows us to prove a $\Zi$ analogue of \cite[Theorem 1.3]{evstb}. This theorem is in some ways weaker than its counterpart in \cite{evstb} due to the lack of linear ordering on $\Zi$, however, it also expands on that theorem:
\begin{theorem}
\label[theorem]{THM1.3}
%(Demark-Hindes-Jones-Misplon-Stoll-Stoneman, \cite[Theorem 1.3]{evstb} )\\
Let $f_r(x)=x^2+r$ with $r=1/c$. Then $f_r^n$ is irreducible for all $n\ge 1$ if $c$ satisfies one of the following conditions: 
\begin{enumerate}
   \item $c, c+1\in \Zi\backslash\Zi^2$, $2\nmid c$ and $16\nmid c+1$\label{itm:no01.3};
    \item $c+1\in \Zi\backslash\Zi^2$, and $c\equiv \pm i\bmod a+bi$ with $N(a+bi)\equiv 5\mod 8$\label{itm:imodp1.3}
    \item $c\in \Zi\backslash\{0,-1, -1\pm 2i\}$ and $v_\pi(c)$ is odd for all primes $\pi\mid c$.\label{itm:odd1.3}
    %\item $c+1\in \Zi\backslash\Zi^2$ and $$\left|\frac{\prod_{2\nmid v_\pi(c) \text{ and } \pi\not\in 3 +4\Z} \pi^{v_\pi(c)}}{\prod_{2\mid v_\pi(c) \text{ or } \pi\in 3 + 4\Z} \pi^{v_\pi(c)}} \right|> 1.15/|c|^{1/30}.$$ \label{itm:ratio1.3}
\end{enumerate}
\end{theorem}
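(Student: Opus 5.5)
Each of the three conditions is handled the same way: verify that $f_r^2$ is irreducible over $\Qi$, then apply \cref{lem1.10}. For the second step it suffices to show that $a_n(c)$ is a non-square in $\Zi$ for every $n\ge 3$, since then $\sqrt{a_n}\notin\Zi$ and $b_n^\pm(c)\notin\Zi$ at all.

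For irreducibility of $f_r$ and $f_r^2$, use the standard Capelli-type criterion for $x^2+r$. Here $f_r$ is irreducible iff $-r=-1/c$ is a non-square, i.e. iff $-c$ is a non-square in $\Zi$. Since $-1=i^2$ is a square in $\Zi$, this is the same as $c\notin\Zi^2$. Given that, $f_r^2$ is irreducible iff $-f_r(0)$ (equivalently $f_r(0)$, again since $-1$ is a square) is a non-square in $\Qi(\sqrt{-r})$. Concretely, one computes that $f^2_r$ factors iff $c=\alpha^2(2i-\alpha^2)$ up to units, matching case \ref{itm:irred} of \cref{conj1.7}. I will use the following reduction: $f^2_r(0)=(c+1)/c^2$, so the condition "$c+1\notin \Zi^2$" together with $c$ not of that exceptional shape gives irreducibility of $f_r^2$. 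In each case I check that the hypotheses exclude $c\in\Zi^2$ and $c=\alpha^2(2i-\alpha^2)$:
\begin{itemize}
\item In case \ref{itm:no01.3} these are excluded by $c\notin\Zi^2$ and by parity, since $2\nmid c$ while $\alpha^2(2i-\alpha^2)$ with $\alpha$ a unit is small and can be checked directly.
\item In case \ref{itm:imodp1.3} they are excluded by a congruence argument mod $\pi=a+bi$.
\item In case \ref{itm:odd1.3} they are excluded because the odd valuations rule out the factor $\alpha^2$ unless $\alpha$ is a unit, leaving finitely many $c$, among them $-1\pm2i$.
\end{itemize}

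Next, show $a_n\notin\Zi^2$ for $n\ge3$ by a congruence or valuation obstruction in each case.
\begin{itemize}
\item \emph{Case \ref{itm:no01.3}.} Reduce the recurrence $a_n=c^{2^{n-1}-1}+a_{n-1}^2$ modulo a power of $(1+i)$, say modulo $4$ or $8$ in $\Zi$. With $c$ odd and $a_{n-1}$ tracked mod $8$, one hopes $a_n\equiv c\cdot(\text{square})+(\text{square})$ lands in a residue class that is not a square mod $8$. The hypothesis $16\nmid c+1$ is what prevents $c\equiv -1$ to high $(1+i)$-adic order, where the obstruction would disappear.
\item \emph{Case \ref{itm:imodp1.3}.} Work modulo $\pi$, where $\Zi/\pi\cong\mathbb{F}_p$ with $p=N(\pi)\equiv5\bmod 8$. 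Then $c\equiv\pm i$ has order $4$, so $c^{2^{n-1}-1}\equiv c^{-1}\equiv\mp i$ for $n\ge3$, since $4\mid 2^{n-1}$. Also $a_2=c+1$, and iterating gives a periodic sequence mod $\pi$. Quadratic reciprocity and $p\equiv5\bmod8$ make $i$ a square but $2$ a non-square, so I expect $a_n\equiv$ a fixed non-residue (something like $1\mp i$ times a square, or $\pm 2i$) for all $n\ge3$.
\item \emph{Case \ref{itm:odd1.3}.} Take $\pi\mid c$ with $v_\pi(c)$ odd. Then $a_n\equiv a_{n-1}^2\equiv\cdots\equiv a_1^{2^{n-1}}=1\pmod \pi$, so $\pi$ gives nothing directly. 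Instead, use that $a_n\,c^{-2^{n-1}}=f^n_r(0)$ and work archimedeanly or via the sign-free analogue. My first attempt is the factorization $a_n-a_{n-1}^2=c^{2^{n-1}-1}$. If $a_n=y^2$, then $(y-a_{n-1})(y+a_{n-1})=c^{\text{odd}}$ with coprime factors (they are coprime to $c$ since $a_{n-1}\equiv1 \bmod \pi$, up to $(1+i)$). Each factor is then a unit times a product of full prime powers $\pi^{v_\pi(c)(2^{n-1}-1)}$. Subtracting the two factors gives $2a_{n-1}=u_1A-u_2B$ with $AB=c^{2^{n-1}-1}$, and a size comparison $|a_{n-1}|\approx|c|^{2^{n-2}-1/2}$ versus $|A|,|B|$ forces either $A$ or $B$ to be a unit. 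That reduces to $2a_{n-1}\approx \pm c^{2^{n-1}-1}$, which size rules out except for small $c$ such as the excluded $-1\pm2i$. Strictly, this argument shows $b_n^\pm$ is non-square rather than $a_n$; the same bookkeeping with $b_n^\pm=i(a_{n-1}\pm y)$ is why the lemma is phrased with $b_n^\pm$.
\end{itemize}

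The main obstacle is case \ref{itm:odd1.3}. Without an ordering, the size comparison must use $|\cdot|$ on $\C$, and controlling $|a_{n-1}|$ requires ruling out cancellation in $c^{2^{n-2}-1}+a_{n-2}^2$. I would try to prove by induction that $|a_n|\ge \tfrac12|c|^{2^{n-1}-1}$ for $|c|\ge 3$, treating the remaining small $c$ by direct computation.
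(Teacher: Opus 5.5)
\textbf{Case (1) and the overall strategy.} Your global plan, proving $a_n\notin\Zi^2$ for all $n\ge 3$ before invoking the $b_n^\pm$ criterion, cannot work in cases (1) and (3). Take $c=-89=-(8+5i)(8-5i)$. All valuations are $1$, so (3) holds. Condition (1) holds as well: $2\nmid c$, $c+1=-88$ is a non-square since $v_{11}(-88)=1$, and $16\nmid -88$. Yet $a_3(-89)=c^3+c^2+2c+1=-697225=(835i)^2$. So in these two cases you must show $b_n^\pm$ is a non-square while $a_n$ may be a square; the paper phrases (1) and (3) exactly this way. Your case (1) plan breaks down concretely. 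If $c\equiv 3+4i=(2+i)^2$ or $c\equiv -1=i^2 \pmod 8$, then $a_{2k+1}\equiv c\pmod 8$ is a square residue; indeed $a_{2k+1}$ is then a square in the $(1+i)$-adic completion. So no congruence modulo a power of $1+i$ can exclude $a_{2k+1}\in\Zi^2$. The paper instead reduces $b_{2k+1}^\pm=i(a_{2k}\pm\sqrt{a_{2k+1}})$ modulo $8$, using $a_{2k}\equiv c+1\pmod 8$, and finds non-squares. The hypothesis $16\nmid c+1$ is used precisely to exclude $\sqrt{a_{2k+1}}\equiv \pm i\pmod 8$ when $c\equiv -1\pmod 8$. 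The even-index terms $a_{2k}$ are handled not by congruences but by rigid divisibility from $a_2=c+1\notin\Zi^2$.

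\textbf{Case (3).} Your size comparison is wrong. With $m=2^{n-1}-1$ one has $|a_{n-1}|\approx |c|^{(m-1)/2}$, not $|c|^{m/2}$, and both $y\pm a_{n-1}$ have modulus about $|c|^{m/2}$. Nothing forces a unit: for $c=-89$, $n=3$ they are $\pm 88+835i$, each of norm $89^3$. The step you need is the one just before. When $c$ is odd, $\gcd(b_n^+,b_n^-)=1$ and $b_n^+b_n^-=c^m$, so each prime $\pi\mid c$ divides exactly one of $b_n^\pm$, to the odd exponent $m\,v_\pi(c)$. Hence $b_n^\pm$ is a non-square unless it is a unit. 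The units $\pm i$ are non-squares, and $b_n^\pm=\pm1$ gives $c^m=-1\pm 2i a_{n-1}$; together with $a_{n-1}\equiv 1\pmod c$ this forces $c\mid -1\pm 2i$.

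You also never treat $1+i\mid c$. There only $\gcd(b_n^+,b_n^-)\mid 2$ holds, and one of $b_n^\pm$ can carry exactly $(1+i)^2$. You must therefore rule out $b_n^\pm=\pm 2i=(1\pm i)^2$. The paper shows this forces $c\mid 8$ or $c\mid 2^{n-1}$, then disposes of $c=\pm 2^k(1+i)$ by computations modulo $3$. Note also that $c=1$ satisfies the hypothesis of (3) vacuously while $x^2+1=(x+i)(x-i)$. So your claim that odd valuations exclude $c\in\Zi^2$ needs $c\neq 1$, and so does the statement itself.

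\textbf{Case (2).} You have the decisive character backwards. Since $2=-i(1+i)^2$ and $-1=i^2$, the elements $2$ and $i$ always have the same quadratic character modulo an odd prime. For $N(\pi)=p\equiv 5\pmod 8$ both are non-residues: $i$ has order $4$ in $\mathbb{F}_p^\times$, and a square root of it would need order $8$. That is exactly why $a_{2k+1}\equiv \pm i\pmod{\pi}$ is a non-residue. The even terms, however, satisfy $a_{2k}\equiv -1\mp i\pmod{\pi}$, whose character is not determined by $N(\pi)\bmod 8$. For example, with $\pi=1-2i$ and $c\equiv i$ one gets $-1-i\equiv 1\pmod{\pi}$. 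So there is no fixed non-residue for all $n\ge 3$; this is where $c+1\notin\Zi^2$ must enter, via rigid divisibility, as in the paper. Also, $a+bi$ need not be prime: use the Jacobi symbol, as the paper does, or pass to a prime factor of norm $\equiv 5\pmod 8$.

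\textbf{Minor point.} In case (1), parity does not exclude $c=\alpha^2(2i-\alpha^2)$: $\alpha=1+2i$ gives the odd value $c=-1+18i$. What does exclude it is the identity $c+1=(i\alpha^2+1)^2$.
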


A notable consequence of \cref{itm:imodp1.3} is that for integers $c\equiv 2 \mod 4$, $f_r$ is stable over $\Zi$  
(and therefore $\Z$). Thus, by \cite[Corollary 4.6]{evstb},  $c\not\equiv 0 \mod 4$ implies $f_r$ is stable over $\Z$ (when $c$ is non-square in $\Z$). \\

%This offers more evidence for the conjecture that if $c\neq 4m^2(m^2-1)$, that $f^2$ irreducible implies $f^n$ is irreducible for all $n\ge 1$ (over $\mathbb{Z}$).\\

We prove another result allowing us to check irreducibility of $f^k$ for some finite $k$ to determine stability of $f$. The lack of a linear ordering on $\Zi$ leads to a new function $\Xi(c)$ for our bound, as compared to $\epsilon(c)$ of \cite[Theorem 1.4]{evstb}:

\begin{theorem}
\label[theorem]{THM1.4}
Let $f_r(x)=x^2+r$ with $r=1/c$ for $c\in \Zi$ with $|c|\ge 5$. If $f^k$ is irreducible for $$k=1+\left\lfloor\log_2 \left( 1+\frac{\log{4}+\Xi(|c|)/\sqrt{|c|}}{\log{\sqrt{1+1/|c|}}}\right)\right\rfloor,$$
then all $f^n$ are irreducible.  Here $\Xi(|c|)$ is a monotonically decreasing function on $\mathbb{R}_{\ge 5}$ with $$\sqrt{2}< \Xi(|c|) \le \Xi(5)< 6.05 \text{(see \cref{def4.5})}$$
\end{theorem}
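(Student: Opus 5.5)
The plan is to reduce everything to \cref{lem1.10} and then show that $b_n^\pm(c)$ cannot be a square in $\Zi$ once $n$ is large, with the threshold being exactly $k$. Suppose $f^k$ is irreducible. Then $f^2$ is irreducible (a factor of $f^2$ would give a factor of $f^k$, since $f^k=f^2\circ f^{k-2}$), so \cref{lem1.10} applies. It also gives $b_n^\pm(c)$ non-square for $3\le n\le k$: by the standard Capelli-type argument behind \cref{lem1.10}, irreducibility of $f^{n}$ given $f^{n-1}$ irreducible is equivalent to $-f^{n}(0)$ (up to the square normalisation $c^{2^{n-1}}$) not being a square in $\Qi(\beta)$, where $\beta$ is a root of $f^{n-1}$. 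This is what the $b_n^\pm$ encode. So it suffices to prove that for $n>k$ neither $b_n^+$ nor $b_n^-$ is a square in $\Zi$. If $a_n$ is not a square in $\Zi$, then $b_n^\pm\notin\Zi$ and we are done. So assume $a_n=s^2$ with $s\in\Zi$, and write $b^\pm=i(a_{n-1}\pm s)$.

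The key is a size estimate. From $a_n=c^{2^{n-1}-1}+a_{n-1}^2$ we get $s^2-a_{n-1}^2=c^{2^{n-1}-1}$, so $(s-a_{n-1})(s+a_{n-1})=c^{2^{n-1}-1}$. Suppose $b^+=u^2$ and $b^-=\pm\ldots$; more precisely, suppose one of $i(a_{n-1}\pm s)$ is a square $u^2$. The factors $s\pm a_{n-1}$ have product $c^{2^{n-1}-1}$, which is an odd power of $c$. I would compare $|a_{n-1}|$ with $|c|^{2^{n-2}}$ using the recurrence. Normalising $\tilde a_n=a_n/c^{2^{n-1}-1}$ gives $\tilde a_n=1+c\,\tilde a_{n-1}^2$, so $f^n(0)=\tilde a_n/c$. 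Hence $|a_n|$ lies between $(|c|-1)^{\cdot}$-type and $(|c|+1)^{\cdot}$-type powers; more precisely $|a_n|^{1/2^{n-1}}$ stays within a factor $\sqrt{1+1/|c|}$ of $\sqrt{|c|}$, with the correction controlled by a convergent product. The term $\log 4+\Xi(|c|)/\sqrt{|c|}$ in the statement suggests that the errors accumulate as a sum $\sum_j \log|1+1/(c\tilde a_j^2)|/2^j$. This sum is bounded by an explicit function, which I would define as $\Xi(|c|)/\sqrt{|c|}$; its monotonic decrease and the bounds $\sqrt2<\Xi\le\Xi(5)<6.05$ follow by checking the series for $|c|\ge5$.

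With these estimates, the square condition forces a divisibility/size contradiction. Since $(s-a_{n-1})(s+a_{n-1})$ is a power of $c$, a common-factor analysis (gcd divides $2s$ and $c$-powers, and $\gcd(a_{n-1},c)=1$ because $a_{n-1}\equiv a_{n-2}^2\equiv\cdots\equiv1 \bmod c$) shows that $s-a_{n-1}$ and $s+a_{n-1}$ are, up to units and a bounded power of $1+i$, a complementary split $c^{2^{n-1}-1}$ and $1$. So one of them is tiny (norm at most $4$) and the other has size $\approx |c|^{2^{n-1}-1}$. But $|s\pm a_{n-1}|$ are both of order $|c|^{2^{n-2}}$ up to multiplicative error $4\cdot\exp(\Xi/\sqrt{|c|})$, and their ratio must be at least $|c|^{2^{n-1}-1}/4$. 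Meanwhile the deviation of $|a_{n-1}|$ from the "ideal" power grows like $(1+1/|c|)^{(2^{n-1}-1)/2}$. Solving $(2^{n-1}-1)\log\sqrt{1+1/|c|}>\log4+\Xi(|c|)/\sqrt{|c|}$ gives exactly $n>k$, after which the two estimates are incompatible. Hence $b_n^\pm$ is non-square for all $n>k$, and \cref{lem1.10} finishes the proof.

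The main obstacle is the precise size step. Over $\Z$ one uses the sign of $a_{n-1}$ to say which of $s\pm a_{n-1}$ is small. Over $\Zi$ there is no ordering, so both $b^+$ and $b^-$ must be controlled, and the argument has to be run through absolute values with the triangle inequality. Getting a clean bound on the accumulated error $\Xi$ (rather than a sign argument) is where I expect most of the work, together with the unit and $(1+i)$-power bookkeeping in the gcd analysis.
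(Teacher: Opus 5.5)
There is a genuine gap: the common-factor step is false. From $(s-a_{n-1})(s+a_{n-1})=c^{m}$, with $m=2^{n-1}-1$ and $\gcd(s-a_{n-1},s+a_{n-1})\mid 2$, you can only conclude that for each prime $\pi\mid c$ other than $1+i$, all of $\pi^{m v_\pi(c)}$ lies in one of the two factors. That is, by \cref{lemma4.1}, up to units the factors are $d\,v^m$ and $u^m/d$ for some coprime factorization $c=uv$ with $d\in\{1,2\}$. Nothing forces $u$ or $v$ to be a unit.

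Your version proves too much. Together with your correct remark that $|s+a_{n-1}|$ and $|s-a_{n-1}|$ are comparable, it would rule out $a_n(c)$ being a square for essentially every $n\ge 3$, and $k$ would play no role. That is false. For $c=-89$ one has $a_3=-697225=(835i)^2$, and $s+a_2=(8+5i)^3$, $s-a_2=-(8-5i)^3$ both have norm $89^3$. So the inequality $(2^{n-1}-1)\log\sqrt{1+1/|c|}>\log 4+\Xi(|c|)/\sqrt{|c|}$ does not follow from anything in your sketch. You give no mechanism for the factor $(1+1/|c|)^{m/2}$ that you attribute to a ``deviation of $|a_{n-1}|$''.

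The missing idea is to compare the two sides of $|v/u|^m=d^{\pm 2}\,|(s+a_{n-1})/(s-a_{n-1})|$.

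\emph{Upper bound.} This relation gives $m\,\bigl|\log|v/u|\bigr|\le\log 4+\Xi(|c|)/\sqrt{|c|}$. The reason is that $|a_{n-1}/s|<F(|c|)\sqrt{2|c|}<1$ for $|c|\ge5$, and $|\log\frac{1+g}{1-g}|\le\log\frac{1+|g|}{1-|g|}$ for $|g|<1$. This is exactly the quantity fixed in \cref{def4.5}, so you cannot redefine $\Xi$ as your own error series.

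\emph{Lower bound.} This comes from integrality: if $|v|>|u|$ then $N(v)\ge N(u)+1$ and $N(u)<|c|$, so $|v/u|>\sqrt{1+1/|c|}$. This is the source of the denominator in $k$.

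\emph{Excluding $|u|=|v|$.} The lower bound requires ruling out $|u|=|v|$, which genuinely occurs, as $c=-89$ shows.
\begin{itemize}
\item In \cref{prop4.5}, $d=2$ with $|u|=|v|$ forces $m\mid 8$, which is impossible.
\item $d=1$ forces $s$ and $a_{n-1}$ to be orthogonal as vectors in $\R^2$, hence $c\in\Z^-$.
\item The case $c\in\Z^-$ is treated separately in \cref{thm:negative}. If some $b_n^\pm$ is a square, then one of $N(u),N(v)$ is a square. Since $|u^m|$ and $|v^m|$ differ by the factor $d^2$, this makes $|c|^m$, hence $|c|=-c$, a square, so $c\in\Zi^2$.
\end{itemize}

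\emph{Reduction step.} Your claim that irreducibility of $f^k$ makes $b_n^\pm$ non-square for $3\le n\le k$ is unjustified, since the norm test is only sufficient. It is also unnecessary. Each $f^n$ with $n<k$ is irreducible because $f^k=f^n\circ f^{k-n}$, and for $n>k$ you can induct using the remark after \cref{lem3.2}.
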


Now, we define the orbit of $t\in \Qi$ under $f_r$ to be the set $O_{f_r}(t)=\{t, f_r(t), f_r^2(t),...\}$, and we say that a prime $\pi$ divides $O_{f_r}(t)$ if there is at least one nonzero $\alpha\in O_{f_r}(t)$ with $v_\pi(\alpha)>0$. We prove the following $\Zi$ analogue of \cite[Proposition 6.1]{evstb}, showing that the natural density of prime divisors of orbits is 0 when $b_n^\pm$ is non-square for all $n\ge 3$. The difficulty in a proof of this over $\Qi$ is constructing the correct tower of Galois extensions on which to apply Chebotarev's theorem:

%Another conjecture of Jones states that if $f\in \Z[x]$ is monic, degree 2, stable and its critical point $\gamma$ has an infinite orbit $O_{f_r}(\gamma)$, then the density of prime divisors of an integer orbit $O_{f_r}(m)$ is zero. \\

\begin{theorem}
\label[theorem]{thm6.1}
 %(Demark-Hindes-Jones-Misplon-Stoll-Stoneman, \cite[Proposition 6.1]{evstb})
Let $f_r(x)=x^2+r$ for $r=1/c$ for $c\in \Zi\backslash\{0,-1\}$, with $c,c+1$ both non-square in $\Zi$. If \cref{conj1.11} above holds for $c$, then for any $t\in \Qi$ we have $$D(\{\pi\text{ prime in }\Zi: \pi \text{ divides } O_{f_r}(t)\})=0.$$
 \end{theorem}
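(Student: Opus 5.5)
We will follow the strategy of Jones' density method as used in \cite[Proposition 6.1]{evstb}, adapted to $\Zi$. The key reduction is standard: a prime $\pi$ (with $v_\pi(t)\ge 0$, $\pi\nmid c$, which excludes only finitely many primes) divides $O_{f_r}(t)$ if and only if some iterate $f_r^n(t)\equiv 0 \bmod \pi$. This happens if and only if $t$ lies in $f_r^{-n}(0)$ modulo $\pi$. In terms of Frobenius, this means that the Frobenius at $\pi$ in $K_n:=\Qi(f_r^{-n}(0))$ fixes a point of $f_r^{-n}(0)$ modulo $\pi$ lying over the residue of $t$. The cleaner route, as in Jones, is to consider the iterated preimage tree $T_{f_r}(0)$ and the images $G_n=\Gal(K_n/\Qi)\le \text{Aut}(T_n)$. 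The set of such primes is then contained in the set of $\pi$ whose Frobenius in $G_n$ fixes at least one end of the tree at level $n$, up to primes that only divide $O_{f_r}(t)$ at levels below $n$. So it suffices to show that
$$\lim_{n\to\infty}\frac{\#\{g\in G_n: g \text{ fixes some vertex at level } n\}}{\#G_n}=0.$$
Chebotarev's density theorem in $K_n/\Qi$ then gives upper density $\le$ that proportion for every $n$, hence density $0$.

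To show the proportion tends to $0$, we use the martingale/branching-process argument of Jones. This requires that, for infinitely many $n$ (in fact for all $n\ge 3$), the extension $K_n/K_{n-1}$ has maximal degree $2^{2^{n-1}}$, or at least that it contains an element acting as an odd number of swaps over enough vertices. The standard criterion is as follows. $[K_n:K_{n-1}]$ is maximal if and only if the product of the $2^{n-1}$ quantities $-\,r-\beta$ (for $\beta\in f_r^{-(n-1)}(0)$), i.e.\ essentially $f_r^n(0)$ up to squares, is a non-square in $K_{n-1}$. A weaker statement that suffices is that $f_r^n(0)$ is a non-square in $K_{n-1}$. Since $K_{n-1}$ contains only quadratic subextensions generated by square roots of earlier $f_r^j(0)$, $j<n$, the task becomes showing that $a_n$ times products of earlier $a_j$ (and $c$, $-1$, etc.) is non-square in $\Qi$.

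This is where the hypotheses enter. $c,c+1$ non-square gives the first two levels, via $f_r(0)=1/c$ and $f_r^2(0)=(c+1)/c^2$. For $n\ge 3$, the assumption that \cref{conj1.11} holds for $c$ — i.e.\ $b_n^\pm(c)$ non-square — is exactly what, via the factorization of $f_r^n$ over $\Qi(\sqrt{a_n})$ used in \cref{lem1.10}, forces the relevant element to be non-square in $K_{n-1}$.

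The main obstacle is this middle step: identifying the correct tower of Galois extensions over $\Qi$ (note that $i\in\Qi$ changes which quadratic twists collapse) and translating non-squareness of $b_n^\pm$ into the statement that $G_n$ acts with a "new" quadratic character at each level. I would first try to mimic \cite{evstb} directly. I would show that $\sqrt{f_r^n(0)}\notin K_{n-1}$ by computing the discriminant of $f_r^n$ (which is, up to squares, $f_r^n(0)$ times the discriminant of $f_r^{n-1}$). I would then check, using the unique factorization in $\Zi$ and coprimality $\gcd(a_n,a_j)\mid$ powers of $c$, that a square relation among the $a_j$ would make some $b_n^\pm$ a square. Once that is in place, the Galois group contains enough "odd" elements to make the fixed-point proportion a supermartingale converging to $0$, finishing the proof.
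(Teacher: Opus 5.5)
\textbf{There is a genuine gap.} You reduce the problem to showing that $f_r^n(0)$ is a non-square in $K_{n-1}$, and you assert that the hypothesis on $b_n^\pm$ forces this. That reduction cannot be carried out under the hypotheses of the theorem, and in your framework the $b_n^\pm$ hypothesis never actually enters.

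Assuming \cref{conj1.11} for $c$ does not rule out $a_n(c)\in\Zi^2$. The hypothesis is designed precisely for that situation: if all $a_n$ with $n\ge 2$ were non-squares, the result would follow directly from \cite[Theorem 1.1]{orbits}, as the remark after the proof notes. Whenever $a_n\in\Zi^2$, the element $f_r^n(0)=a_n/c^{2^{n-1}}$ is already a square in $\Qi\subseteq K_{n-1}$, whatever $b_n^\pm$ is. Then $\mathrm{disc}(f_r^n)$ is a square, $G_n$ acts by even permutations on level $n$, and $K_n/K_{n-1}$ is not maximal.

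A concrete instance is $c=-89$. It satisfies every hypothesis, since case (3) of \cref{thm3.6} gives $b_n^\pm(-89)\notin\Zi^2$ for all $n\ge 3$. Yet $a_3(-89)=-697225=(835i)^2$. So your planned check, that ``a square relation among the $a_j$ would make some $b_n^\pm$ a square'', is false. On the full tree over $\Qi$, the level-$n$ discriminant only sees $a_n$ modulo squares, and $b_n^\pm$ is invisible to it.

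Falling back on the levels with $a_n\notin\Zi^2$ (e.g.\ even $n$) does not rescue your $K_{n-1}$ criterion either. The only available reason $a_{2k}\notin\Zi^2$ is the odd-multiplicity primes it inherits from $a_2=c+1$ by rigid divisibility. Since $\sqrt{c+1}\in K_2$, this says nothing about squareness in $K_{2k-1}$. Separately, your description of the quadratic subfields of $K_{n-1}$ presupposes that $G_{n-1}$ is the full iterated wreath product, which is also not available here.

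\textbf{The missing idea is to split the tree at the first level.} Because $i\in\Qi$, over $K_1=\Qi(\sqrt r)$ we have $f_r=g_1g_2$ with $g_{1,2}=x\pm i\sqrt r$. Hence $f_r^n=g_1(f_r^{n-1})\,g_2(f_r^{n-1})$, and $f_r^{-n}(0)$ is the union of the two half-trees above $\mp i\sqrt r$. For these half-trees the relevant quantity is $g_j(f_r^{n-1}(0))=f_r^{n-1}(0)\pm i\sqrt r\in K_1$, whose norm to $\Qi$ is $f_r^n(0)$.

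When $a_n$ is a square, the computation in the proof of \cref{lem3.2} shows this element is a square in $K_1$ exactly when $\{b_n^+,b_n^-\}$ contains a square in $\Zi$. This is the factorization behind \cref{lem1.10}, and it lives over $\Qi(\sqrt r)$, not over $\Qi(\sqrt{a_n})$.

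The paper then proceeds in three steps:
\begin{enumerate}
\item It checks that these elements are non-squares in $K_1$, and then in $K=\Qi(\sqrt r,\sqrt{\bar r})$.
\item It applies part (2) of \cite[Theorem 1.1]{orbits} to $g_1$ and $g_2$ over $K$. This gives density zero for primes of $\mathcal{O}_K$ dividing some $g_j(f_r^{n-1}(t))$.
\item It descends to $\Zi$. It discards primes of degree greater than one, then uses $f_r^n(t)=g_1(f_r^{n-1}(t))\,g_2(f_r^{n-1}(t))$ together with Chebotarev to compare counts.
\end{enumerate}
Your proposal contains neither the half-tree reduction nor this descent step, and without the former the hypothesis on $b_n^\pm$ has nowhere to act.
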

 
\let\cleardoublepage\clearpage
\section{Background}
\quad We first list some work-saving facts:
%Theorem 1
\begin{proposition}
\label[proposition]{prop1}
If $p(x)\in \Qi[x]$, then any factorization of $p$ into irreducible polynomials over $\Qi$ has the same number of factors as $\bar{p}(x)$, $p(x)$ with conjugated coefficients. In particular, the polynomial $f^n_r(x)$ has the same number of factors irreducible over $\Qi$ as $f^n_{\bar{r}}(x)$.
\end{proposition}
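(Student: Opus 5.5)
The plan is to use complex conjugation as a ring automorphism of $\Qi[x]$. Let $\tau:\Qi\to\Qi$ denote conjugation, $a+bi\mapsto a-bi$; this is a field automorphism. It extends coefficientwise to a map $\tau:\Qi[x]\to\Qi[x]$, $p\mapsto\bar p$. This extension is a ring automorphism: it is additive, and it is multiplicative because the coefficients of a product are sums of products of coefficients. It is also an involution, so $\bar{\bar p}=p$, and it preserves degree and units (the nonzero constants).

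The key step is that $\tau$ sends irreducibles to irreducibles. Suppose $q$ is irreducible but $\bar q=gh$ with $g,h$ nonconstant. Applying $\tau$ gives $q=\bar g\,\bar h$ with both factors nonconstant, a contradiction.

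Now take a factorization $p=u\,q_1\cdots q_k$ into irreducibles, with $u$ a unit. Then $\bar p=\bar u\,\bar q_1\cdots\bar q_k$ is a factorization of $\bar p$ into exactly $k$ irreducibles. By unique factorization in the PID $\Qi[x]$, every irreducible factorization of $\bar p$ has $k$ factors, counted with multiplicity. The same holds for $p$ by symmetry.

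For the ``in particular'' statement, we show that $\overline{f_r^n}=f_{\bar r}^n$ by induction on $n$. Since $\tau$ is a ring homomorphism fixing $x$, it commutes with composition: $\overline{g\circ h}=\bar g\circ\bar h$. This can be checked on monomials, or simply by noting $\overline{g(h(x))}=\bar g(\bar h(x))$ because $\tau$ respects sums and products. We have $\overline{f_r}=x^2+\bar r=f_{\bar r}$, so $\overline{f_r^n}=\overline{f_r}\circ\overline{f_r^{n-1}}=f_{\bar r}\circ f_{\bar r}^{n-1}=f_{\bar r}^n$. Applying the first part gives the claim.

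None of these steps is a real obstacle. The only point requiring care is confirming that coefficientwise conjugation respects composition, and this is immediate from $\tau$ being a homomorphism.
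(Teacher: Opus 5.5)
Your proof is correct and follows the same route as the paper: coefficientwise conjugation is a ring automorphism of $\Qi[x]$, so it preserves irreducibility and hence, by unique factorization, the number of irreducible factors. You also check explicitly that $\overline{f_r^n}=f_{\bar r}^n$ (conjugation commutes with composition), a step the paper leaves implicit.
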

\begin{proof}
Since conjugation is a field automorphism of $\Qi$, it extends to a ring automorphism of $\Qi[z]$. Ring automorphisms preserve irreducibility.
\end{proof}

This allows us to only consider $c$ in quadrants 1 and 3, reducing our work load by "half".\\

We also will need some way of inferring irreducibility of iterated polynomials: the following (derived from Capelli's Lemma) translates irreducibility of the list $\{g(f^n(x))\}_{n\ge1}$ to a property of a sequence in $K$.
\begin{lemma}
\label[lemma]{lemma2.2}
(Demark-Hindes-Jones-Misplon-Stoll-Stoneman, \cite[Lemma 2.2]{evstb})
Let $K$ be a field with $char(K)\neq 2$, $g\in K[x]$ a monic, irreducible polynomial with $d:=\deg(g)\ge 1$, and $f(x)$ monic and quadratic with critical point $\beta$. If the set $$\{(-1)^d g(f(\beta))\} \cup \{g(f^n(\beta))\}_{n\ge 2}$$ contains no squares in $K$, then $g(f^n(x))$ is irreducible over $K$ for all $n\ge 1$.\\
\end{lemma}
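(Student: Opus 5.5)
The plan is to prove by induction on $n\ge 0$ that $h_n:=g\circ f^n$ is irreducible over $K$. The base case $n=0$ is the hypothesis that $g$ is irreducible. Each inductive step will combine Capelli's Lemma with a norm computation.

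For the inductive step, suppose $h_{n-1}$ is irreducible over $K$ for some $n\ge 1$. It is monic of degree $D:=d\,2^{n-1}$, since $f$ and $g$ are monic. Let $\theta$ be a root of $h_{n-1}$ and set $L=K(\theta)$, so $[L:K]=D$. Since $h_n(x)=h_{n-1}(f(x))$, Capelli's Lemma says $h_n$ is irreducible over $K$ if and only if $f(x)-\theta$ is irreducible over $L$.

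Because $\operatorname{char}K\neq 2$ and $f$ is monic quadratic with critical point $\beta$, completing the square gives $f(x)=(x-\beta)^2+f(\beta)$. Hence $f(x)-\theta=(x-\beta)^2-(\theta-f(\beta))$. This quadratic is reducible over $L$ exactly when $\theta-f(\beta)$ is a square in $L$.

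Suppose, for contradiction, that $\theta-f(\beta)=y^2$ with $y\in L$. Taking norms gives
\begin{equation*}
N_{L/K}(\theta-f(\beta))=N_{L/K}(y)^2\in K^2 .
\end{equation*}
On the other hand, write $h_{n-1}(x)=\prod_{j=1}^{D}(x-\theta_j)$, where the $\theta_j$ are the conjugates of $\theta$. Then
\begin{equation*}
N_{L/K}(\theta-f(\beta))=\prod_{j}(\theta_j-f(\beta))=(-1)^{D}h_{n-1}(f(\beta))=(-1)^{D}g(f^{n}(\beta)).
\end{equation*}
The sign depends on $n$:
\begin{itemize}
\item For $n=1$ we have $D=d$, so this says $(-1)^d g(f(\beta))$ is a square in $K$.
\item For $n\ge 2$, $D$ is even, so this says $g(f^n(\beta))$ is a square in $K$.
\end{itemize}
Either conclusion contradicts the hypothesis that the given set contains no squares. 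Hence $f(x)-\theta$ is irreducible over $L$, so $h_n$ is irreducible over $K$, and the induction closes.

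The only delicate point is getting the sign $(-1)^D$ in the norm right, together with the parity of $D$. This is exactly why the statement treats $n=1$ separately with the factor $(-1)^d$. Everything else is a direct application of Capelli's Lemma.
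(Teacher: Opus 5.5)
Your proof is correct and matches the argument the paper relies on, the proof of \cite[Lemma 2.2]{evstb}: induction on $n$ via Capelli's Lemma, reducing to whether $\theta-f(\beta)$ (equivalently the discriminant of $f(x)-\theta$) is a square in $K(\theta)$, and ruling this out with $N_{K(\theta)/K}(\theta-f(\beta))=(-1)^{D}g(f^{n}(\beta))$, $D=d\,2^{n-1}$. The only cosmetic difference is that the original works with the discriminant $b^2-4c+4\theta$ of $x^2+bx+c-\theta$, so its norm carries a factor $(-4)^{D}$ instead of your $(-1)^{D}$. The two differ by the square $4^{D}$, so the conclusions agree.
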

% \begin{proof}
% Let $f(x)=x^2+bx+c$, so that $\gamma=-b/2$. We proceed by induction on $n$, with the $n=0$ case covered by the irreducibility of $g(x)$. Assume then that $g(f^{n-1}(x))$ is irreducible over $K$ for some $n\ge 1$, and let $d_1$ be the degree of $g(f^{n-1}(x))$. By Capelli's Lemma (\ref{capelli}), $g(f^n(x))$ is irreducible over $K$ if and only if for any root $\beta$ of $g(f^{n-1}(x))$, we have $f(x)-\beta$ is irreducible over $K(\beta)$, or equivalently (because $K$ has characteristic different from 2), $\textrm{Disc}(f(x)-\beta)=b^2-4c+4\beta$ is not a square in $K(\beta)$.\\
% This must hold if $N_{K(\beta)/K}(b^2-4c+4\beta)$ is not a square in $K$. The Galois conjugates of $b^2-4c+4\beta$ are precisely $b^2-4c+4\alpha$ as $\alpha$ varies over all roots of $g(f^{n-1}(x))$. Thus \begin{equation*}
%     \begin{split}
%         N_{K(\beta)/K}(b^2-4c+4\beta) &= (-4)^{d_1} \prod_{\alpha \text{ root of } g\circ f^{n-1}} \left[ \left( -\frac{b^2}{4}+c\right)-\alpha\right]\\
%         &= (-4)^{d_1} \cdot g(f^{n-1}(-b^2/4+c))\\
%         &= (-4)^{d_1}\cdot g(f^{n-1}(\gamma))),
%     \end{split}
% \end{equation*}
% where the second equality holds because $g(f^{n-1}(x))$ is monic. Now $d_1$ is odd if and only if $n=1$ and $d$ is odd, which proves the Lemma.
% \end{proof}
\begin{remark}
    The proof found in \cite[Lemma 2.2]{evstb} shows that if $g(f^n(x))$ is irreducible (over $K$) for some $n\ge 1$ and $g(f^{n+1}(\beta))$ is non-square in $K$, then $g(f^{n+1}(x))$ is irreducible (over $K$).
\end{remark}

%  \begin{remark}
%     Since $$-m^2(m^2+2i)= \overline{-\bar{m}^2(\bar{m}^2-2i)},$$ by \cref{prop1} we may restrict our study to $-m^2(m^2-2i)$, since $k_n$ for $f^n_{\bar{r}}$ (the number of factors over $\Qi$ when $\bar{r}=1/\bar{c}$) is the same as $k_n$ for $f^n_r$.\\
% \end{remark}
%\subsection{When $f_r$ or $f^2_r$ is reducible}

\quad This idea that we may infer irreducibility of $f^n$ by considering whether $f^n(0)$ is a square leads to necessary and sufficient conditions for the irreducibility of $f_r$ and $f^2_r$ when $f_r$ is irreducible. The following specialization %of the above lemma is a $\Zi$ analogue of \cite[Proposition 2.1]{evstb}.
is one of the first instances where the lack of ordering changes our results:%If $c=\alpha^2(2i-\alpha^2)$, then 

%$$f^2_r(x)= \left(x^2-\frac{1+i}{\alpha}x- \frac{i\alpha^2+1}{\alpha^2(2i-\alpha^2)}\right)\left(x^2+\frac{1+i}{\alpha}x-\frac{i\alpha^2+1}{\alpha^2(2i-\alpha^2)}\right).$$ 

\begin{proposition}
\label[proposition]{prop2.1}
Let $f_r(x)=x^2+r$ with $r=1/c$ for $c\in \mathbb{Z}[i]\backslash \{0\}$. Then $f_r(x)$ is reducible if and only if $c=\alpha^2$ for some $\alpha \in \Zi$. If $f_r(x)$ is irreducible, then $f^2_r(x)$ is reducible if and only if $c=\alpha^2(2i-\alpha^2)$ for some $\alpha\in \mathbb{Z}[i]$.\\
\end{proposition}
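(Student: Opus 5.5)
The plan is to treat the two statements separately, both by elementary descent from $\Qi$ to $\Zi$. At the end we use that $\Zi$ is integrally closed, so an element of $\Zi$ that is a square in $\Qi$ is already a square in $\Zi$.

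For $f_r$: $x^2+1/c$ is reducible over $\Qi$ iff $-1/c$ is a square in $\Qi$. Since $-1=i^2$, this holds iff $1/c$ is a square, iff $c=c^2\cdot(1/c)$ is a square in $\Qi$. By integral closure this means $c=\alpha^2$ with $\alpha\in\Zi$. The converse is immediate.

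For $f_r^2$, assume $f_r$ is irreducible and write
\[
f_r^2(x)=x^4+2r x^2+(r^2+r).
\]
Its roots are $\pm\sqrt{-r\pm\sqrt{-r}}$, and $\sqrt{-r}\notin\Qi$. So no root lies in $\Qi$, because the square of a root, $-r\pm\sqrt{-r}$, would then lie in $\Qi$. Hence a factorization must be into two monic quadratics. Such a pair cannot be $(x^2-\beta_1)(x^2-\beta_2)$ with $\beta_j\in\Qi$, since $\beta_1,\beta_2$ would then be the roots $-r\pm\sqrt{-r}$ of the irreducible quadratic $y^2+2ry+r^2+r$. Comparing coefficients then shows the factorization has the form $(x^2+ux+v)(x^2-ux+v)$, with
\[
v^2=r^2+r,\qquad 2v-u^2=2r,\qquad u,v\in\Qi,\ u\neq 0.
\]
Conversely, such $u,v$ produce a factorization. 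So $f_r^2$ is reducible iff both of the following hold:
\begin{itemize}
\item $r^2+r=(c+1)/c^2$ is a square in $\Qi$, i.e.\ $c+1=w^2$ with $w\in\Zi$ (by integral closure), taking $v=w/c$ with the sign of $w$ free;
\item $2(v-r)=2(w-1)/c$ is a square in $\Qi$.
\end{itemize}

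Using $c=w^2-1=(w-1)(w+1)$, the quantity $2(w-1)/c$ equals $2/(w+1)$. This is a square iff $2(w+1)$ is a square in $\Qi$, hence in $\Zi$. The case $w=1$ gives $c=0$, which is excluded.

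The step needing the most care is identifying which $w$ make $2(w+1)$ a square. Write $2=-i(1+i)^2$. If $2(w+1)=\gamma^2$, then $(1+i)\mid\gamma$, since $1+i$ is prime and divides $\gamma^2$. Setting $\gamma=(1+i)\beta$ gives $2(w+1)=2i\beta^2$, so $w=i\beta^2-1$. Then
\[
c=w^2-1=-\beta^4-2i\beta^2 .
\]
Substituting $\alpha=i\beta$ (so $\alpha^2=-\beta^2$) turns this into $c=\alpha^2(2i-\alpha^2)$.

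For the converse, given $c=\alpha^2(2i-\alpha^2)$, take $w=-i\alpha^2-1$. Then $w^2=c+1$ and $2(w+1)=-2i\alpha^2=\bigl((1-i)\alpha\bigr)^2$. This yields the explicit factorization
\[
f^2_r(x)=\Bigl(x^2-\tfrac{1+i}{\alpha}x-\tfrac{i\alpha^2+1}{\alpha^2(2i-\alpha^2)}\Bigr)\Bigl(x^2+\tfrac{1+i}{\alpha}x-\tfrac{i\alpha^2+1}{\alpha^2(2i-\alpha^2)}\Bigr),
\]
which can be checked by expanding.

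The main subtlety is tracking the sign choices of $w$ and the square roots. Because $-1$ is a square in $\Zi$, these signs collapse into the single parametrization by $\alpha$. This is also where the result differs from the $\Z$ form $c=4m^2(m^2-1)$.
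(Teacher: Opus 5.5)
Your proof is correct and follows essentially the paper's route. The paper proves the ``if'' direction with the same explicit factorization. For the ``only if'' direction it defers to the $\mathbb{Q}$ argument of DeMark et al., reducing to $c=k^2-1$ with $2(k^2-1)(\pm k-1)$ a square and then factoring $2=i(1-i)^2$; this is exactly your condition $c+1=w^2$, $2(w-1)/c=2/(w+1)\in\mathbb{Q}(i)^2$ together with your $2=-i(1+i)^2$, so your biquadratic coefficient comparison simply writes out the details the paper leaves to the cited proof.
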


 \begin{proof} %The proof is analogous to \cite[Proposition 2.1]{evstb}.

%To prove the first statement, note that $f_r(x)=x^2+r = (x+i\sqrt{r})(x-i\sqrt{r})$ over $\Qi$ if and only if $\pm i\sqrt{r}\in \Qi$, if and only if $r$ is square in $\Qi$. Then $c=1/r$ is a square in $\mathbb{Z}[i]$.

%%%

The first statement is clear. Now suppose $f_r$ is irreducible over $\Qi$. If $c=\alpha^2(2i-\alpha^2)$, then 

$$f^2_r(x)= \left(x^2-\frac{1+i}{\alpha}x- \frac{i\alpha^2+1}{\alpha^2(2i-\alpha^2)}\right)\left(x^2+\frac{1+i}{\alpha}x-\frac{i\alpha^2+1}{\alpha^2(2i-\alpha^2)}\right).$$

These factors are reducible if and only if their discriminant $\frac{-2i}{\alpha^2-2i}$ is square in $\Qi$ (i.e. $\alpha^2- 2i$ is a square in $\Zi$): suppose $\alpha^2-2i=\eta^2$. Then $2i= (\alpha-\eta)(\alpha+\eta)$. If $\{\alpha+\eta, \alpha-\eta\} =\{-1+i,1-i\}$, this implies $2\alpha=0$. Since $c\neq 0$, we have a contradiction. We obtain similar contradictions when $\{\alpha+\eta, \alpha-\eta\}\in \pm\{ \{1+i,1+i\}, \{1, 2i\}, \{i, 2\}\}$. Thus the factors of $f^2_r(x)$ are irreducible over $\Qi$ when $c=\alpha^2(2i-\alpha^2)$.\\

The converse is analogous to the proof of \cite[Proposition 2.1]{evstb}, but $2(k^2-1)(-1\pm k)$ is potentially a square in $\Zi$ instead of only $ 2(k^2-1)(-1+ k)$ over $\Q$ (where $c=k^2-1$). We also need to factor $2=i(1-i)^2$.

\end{proof}

Non-squares in $K=\Qi$ are thus of great interest, so squares in $\Zi$ are too. A tactic is checking for quadratic non-residues: if the Gaussian integer $\alpha$ is non-square mod $\pi$, then $\alpha$ is non-square in $\Zi$. In the rest of this paper, $\glegendre{\cdot}{\alpha}$ and $\glegendre[2]{\cdot}{\pi}$ refer to the Gaussian integer Jacobi and Legendre symbols (respectively) as in \cite{recip}. Thus if $\alpha=\pi_1^{a_1}\cdot\pi_2^{a_2}\cdots \pi_n^{a_n}$ is a prime factorization of $\alpha\in \Zi$, then $$\glegendre{\sigma}{\alpha}=\glegendre[2]{\sigma}{\pi_1}^{a_1}\cdot\glegendre[2]{\sigma}{\pi_2}^{a_2} \cdots \glegendre[2]{\sigma}{\pi_n}^{a_n}=
    -1 \quad \text{ if } \sigma \text { is a non-residue }\bmod \alpha.$$

The notion of a "rigid divisibility sequence" will also be advantageous in this search for non-squares.
\subsection{Rigid divisibility}
\quad One can show inductively that $f_{1/c}^n(0)=a_n/c^{2^{n-1}}$, where \begin{equation*}
   a_1(c)=1, \quad a_n(c)=a^2_{n-1}(c)+c^{2^{n-1}-1} \quad \text{ for } n\ge 2. 
\end{equation*} 
Note that the numerator $a_n$ is relatively prime to the denominator $c^{2^{n-1}}$, a square when $n>1$. This numerator inherits prime divisors in a predictable way, leading to the notion of "rigid divisibility":
%a property of some sequences $\{z_n\}_{n\ge 1}$ yielding that the prime divisors of $z_j$ are related to those of $z_{jk}$ in a precise way:

%Definition 2.5
\begin{definition}
\label[definition]{def2.5}
Let $A=\{z_n\}_{n\ge 1}$ be a sequence in a field $K$. We say $A$ is a \textbf{rigid divisibility sequence over $K$} if for each non-archimedean absolute value $|\cdot|$ on $K$, the following  hold:\\

(1) if $|z_n|<1$, then $|z_n|=|z_{kn}|$ for any $k\ge 1$.\\

(2) if $|z_n|<1$ and $|z_j|<1$, then $|z_{\text{gcd}(n,j)}| <1$.\\
\end{definition}

The following theorem establishes rigid divisibility of $\{f^n(0)\}_{n\ge 1}$, and implies that the $\{a_n\}_{n\ge1}$ sequence also has this property:
\begin{theorem}
\label[theorem]{anrigdiv}(Hamblen, Jones and Madhu, \cite[Lemma 12]{orbits})
    Let $K$ be a field and $f(x)=x^d+r\in K[x]$ for some $d\ge 2$. Then $\{f^n(0)\}_{n\ge 1}$ is a rigid divisibility sequence over $K$.
\end{theorem}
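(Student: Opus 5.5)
The plan is to prove both properties of \cref{def2.5} directly from one congruence for the iterates of $f(x)=x^d+r$. Put $z_k=f^k(0)$ and fix a non-archimedean absolute value $|\cdot|$ on $K$.

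\textbf{Step 1: reduce to $|r|\le 1$.} Suppose $|r|>1$. Then $|z_1|=|r|>1$. By induction, $|z_{k-1}^d|=|r|^{d^{k-1}}>|r|$, and the ultrametric inequality gives $|z_k|=|r|^{d^{k-1}}>1$ for every $k$. So no $z_n$ satisfies $|z_n|<1$, and both conditions hold vacuously. From now on assume $|r|\le 1$. Since $z_k\in\Z[r]$, this gives $|z_k|\le 1$ for all $k$.

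\textbf{Step 2: the key congruence.} We have $f^m(x)=f^{m-1}(x^d+r)$, so $f^m(x)$ is a polynomial in $x^d$ with coefficients in $\Z[r]$. Hence
$$f^m(x)=f^m(0)+x^d g_m(x), \qquad g_m\in \Z[r][x].$$
All coefficients of $g_m$ have absolute value at most $1$. Thus for any $y$ with $|y|\le 1$ we get $|f^m(y)-z_m|\le |y|^d$. Taking $y=z_j$ gives
$$|z_{j+m}-z_m|\le |z_j|^d \quad\text{for all } j,m\ge 1.$$

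\textbf{Step 3: property (1).} Assume $|z_n|<1$ and induct on $k$, the case $k=1$ being trivial. Write $z_{kn}=f^n(z_{(k-1)n})$ and apply Step 2 with $y=z_{(k-1)n}$:
$$|z_{kn}-z_n|\le |z_{(k-1)n}|^d=|z_n|^d<|z_n|.$$
The last inequality uses $d\ge 2$ and $0<|z_n|<1$. If instead $z_n=0$, then $0$ is periodic and $z_{kn}=0$ directly. In either case the strict ultrametric inequality gives $|z_{kn}|=|z_n|$.

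The one point needing care is this: we must expand around $z_{(k-1)n}$ as the argument, not expand $z_{kn}$ around $z_{(k-1)n}$ as the base value. The latter only yields a difference of two small quantities and gives no equality.

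\textbf{Step 4: property (2).} Assume $|z_n|<1$ and $|z_j|<1$ with $n>j$. Step 2 gives $|z_n-z_{n-j}|\le |z_j|^d<1$, and hence $|z_{n-j}|<1$. The set of indices $k$ with $|z_k|<1$ is therefore closed under subtraction of a smaller index from a larger one. Running the Euclidean algorithm on $(n,j)$ then yields $|z_{\gcd(n,j)}|<1$.

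I expect no serious obstacle. The only delicate point is the choice of expansion in Step 3, which is exactly where $d\ge 2$ is needed.
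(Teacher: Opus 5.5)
Your proof is correct and complete. The reduction to $|r|\le 1$, the estimate $|f^m(y)-f^m(0)|\le |y|^d$ for $|y|\le 1$ (from $f^m(x)\in f^m(0)+x^d\,\Z[r][x]$), the induction for property (1) including the case $z_n=0$, and the subtractive Euclidean algorithm for property (2) all check out. They hold for every non-archimedean absolute value, including the trivial one.

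The paper gives no proof of this statement; it simply quotes \cite[Lemma 12]{orbits}. So there is no internal argument to compare against. Yours is a self-contained proof along the natural lines: work over the valuation ring and use $f^m(x)\equiv f^m(0) \bmod x^d$.

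One correction to your commentary in Step 3: the expansion you warn against also works. Write $z_{kn}=f^{(k-1)n}(z_n)$ and apply Step 2 with $m=(k-1)n$, $j=n$. By the inductive hypothesis this gives $|z_{kn}-z_{(k-1)n}|\le |z_n|^d<|z_n|=|z_{(k-1)n}|$, so again $|z_{kn}|=|z_n|$. The hypothesis $d\ge 2$ enters in exactly the same way in both versions, so neither choice is the delicate point.

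Property (2) also has a slightly quicker proof via the residue field. Once $|r|\le 1$, let $\bar f=x^d+\bar r$ be the reduction of $f$. Then $|z_k|<1$ exactly when $\bar f^k(0)=0$. So the set of such $k$ is either empty or the set of multiples of the exact period of $0$ under $\bar f$, which is visibly closed under $\gcd$. Your subtraction argument proves the same thing directly and is equally valid.
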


The notion of rigid divisibility leads to tables of congruences for $c$ ensuring $f_{1/c}^n$ is irreducible for all $n$, the $\Zi$ analogue of \cite[Proposition 3.5]{evstb}:
%Proposition 3.5
\begin{proposition}
\label[proposition]{prop3.5}
Suppose that $c\in \Zi\backslash\{0\}$. If $c$ satisfies any of the congruences in \Cref{tab:2a}, or $c+1\in \Zi\backslash\Zi^2$ and $c$ satisfies any of the congruences in \Cref{tab:2b}, then $a_n$ is not a square in $\Zi$ for all $n\ge2$.
\end{proposition}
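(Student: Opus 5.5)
The plan is to show that $a_n \bmod M$ is eventually periodic, for each modulus $M$ appearing in the tables. We then locate some $n$ in each residue class (or a single $n$) where $a_n$ is a quadratic non-residue, and use rigid divisibility to spread this obstruction to all $n\ge 2$.

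Concretely, fix an entry of \Cref{tab:2a}: a modulus $\mu\in\Zi$, usually a power of a Gaussian prime $\pi$ or a product of such powers, and a residue class $c\equiv c_0 \bmod \mu$. The recurrence $a_1=1$, $a_n=a_{n-1}^2+c^{2^{n-1}-1}$ depends only on $c \bmod \mu$ when read mod $\mu$. So $a_n \bmod \mu$ is determined by $c_0$. The exponents $2^{n-1}-1$ are eventually periodic modulo the exponent of $(\Zi/\mu)^\times$, which makes $(a_n \bmod \mu)_n$ eventually periodic, and a finite computation suffices.

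\textbf{Case A (direct obstruction).} For such congruences I would verify that for every $n\ge 2$ the residue $a_n \bmod \mu$ is a non-square. This means either $\glegendre[2]{a_n}{\pi}=-1$ for some prime $\pi\mid\mu$ coprime to $a_n$, or, when $\mu$ is a power of $1+i$, that $a_n$ lies outside the squares mod $(1+i)^k$. The typical situation is $c\equiv 0 \bmod \pi^k$. Then $c^{2^{n-1}-1}\equiv 0$, so $a_n\equiv a_{n-1}^2+(\text{small})$, and one tracks the first few $n$ until the pattern stabilizes. If $a_n$ is a non-square modulo something dividing a power of $\mu$, it is a non-square in $\Zi$.

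\textbf{Case B (tables with the hypothesis $c+1\notin\Zi^2$).} Here $a_2=c+1$ is given to be a non-square, and the table congruence should force a prime $\pi$ with $v_\pi(a_m)$ odd, or $a_m$ a non-residue, for the remaining $n$. I would use \cref{anrigdiv}: $\{f^n(0)\}$ is a rigid divisibility sequence, and $\gcd(a_n,c)=1$, so $v_\pi(a_n)=v_\pi(f^n(0))$ for $\pi\nmid c$. Hence if $v_\pi(a_m)>0$ is odd, then $v_\pi(a_{km})=v_\pi(a_m)$ is odd for all $k$, so $a_{km}$ is a non-square. Part (2) of \cref{def2.5} shows $\pi\nmid a_n$ for $m\nmid n$ (taking $m$ minimal). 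For such $n$ I would instead use the periodic residue computation modulo $\pi$ (or another table modulus) to show $a_n$ is a non-residue. Since $\pi$ has odd valuation in $a_2=c+1$ when $c+1$ is a non-square (up to units, a separate unit/$(1+i)$ check), even $n$ are handled by rigidity, and odd $n$ by the congruence in \Cref{tab:2b}. Typically this is $c\equiv$ something mod a prime of norm $\equiv 5 \bmod 8$, where the odd-index $a_n$ cycle through non-residues.

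The main obstacle is the bookkeeping in the non-rigid indices. One must show that the eventual period of $a_n \bmod \mu$ really avoids squares for every residue class of $n$, including small $n$ before periodicity sets in. The prime $1+i$ is the other difficulty: Legendre symbols do not apply there, and squareness modulo $(1+i)^k$ must be checked by hand, also accounting for units $\pm i$. Squares in $\Zi$ are defined only up to that ambiguity, since $i$ is itself a non-square. I would handle each table row by an explicit finite computation of $(a_n \bmod \mu)$ and $(c^{2^{n-1}-1}\bmod\mu)$ over one period, and present these as the verification.
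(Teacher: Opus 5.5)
Your architecture matches the paper's. For \Cref{tab:2a} you use a finite, eventually periodic computation of $a_n$ modulo the row's modulus. For \Cref{tab:2b} you use the row congruence at the indices it controls, plus rigid divisibility from $a_2=c+1$ at the even indices.

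The gap is the step ``$\pi$ has odd valuation in $a_2=c+1$ when $c+1$ is a non-square,'' which you defer to ``a separate unit/$(1+i)$ check.'' Over $\Zi$ that step is false. The element $c+1$ can be $i\alpha^2$, which is a non-square all of whose prime valuations are even, and this happens inside the tables. For example, the \Cref{tab:2b} row $c\equiv 3\pmod{4+4i}$ contains $c=4i\beta^2-1$ for every $\beta$ prime to $1+i$ (e.g.\ $c=-1+4i$, $c+1=4i$). In that row $a_{2k}\equiv 4\pmod{4+4i}$ is a square residue, so the even indices rest entirely on the hypothesis $c+1\notin\Zi^2$. For such $c$, rigid divisibility only gives $a_{2k}=(c+1)P$ with $P$ prime to $c+1$. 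Then $a_{2k}$ is a square exactly when $P\in i\Zi^2$, and nothing in your plan excludes that. Handling this unit obstruction is precisely what the paper's proof adds to the argument over $\Z$.

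The paper first disposes of $c\equiv i,1+i\pmod 2$ via \Cref{tab:2a}, then handles the remaining classes with a mod-$2$ argument:
\begin{itemize}
\item If $2\mid c$, then $a_n\equiv 1\pmod 2$ for all $n$, whereas $i\Zi^2$ reduces into $\{0,i\}$ modulo $2$. So every non-square $a_n$ has a prime of odd valuation, and rigidity propagates it.
\item If $c\equiv 1\pmod 2$ and $c+1=i\alpha^2$, write $a_{2k}(x)=(x+1)P_{2k}(x)$ with $P_{2k}\in\Z[x]$ (as $a_{2k}(-1)=0$). Rigidity at $1+i$, which divides $c+1$, gives $(1+i)\nmid P_{2k}(c)$. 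Since $P_{2k}(c)\equiv P_{2k}(1)\pmod 2$, the integer $P_{2k}(1)$ is odd. Hence $P_{2k}(c)\equiv 1\pmod 2$, so $P_{2k}(c)\notin i\Zi^2$.
\end{itemize}
You need this, or an equivalent argument, to finish Case B.

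One further warning: your Case A cannot be carried out for the \Cref{tab:2a} row $c\equiv 1+i\pmod 2$.
\begin{itemize}
\item There $a_3\equiv a_2^2\equiv i^2\equiv 1\pmod 2$.
\item For $n\ge 4$, write $a_n=a_{n-1}^2\bigl(1+c^m/a_{n-1}^2\bigr)$, where $a_{n-1}$ is prime to $1+i$ and $v_{1+i}(c^m)\ge 7>4=v_{1+i}(4)$. By Hensel's lemma $a_n$ is a square in the $(1+i)$-adic completion of $\Zi$, so no congruence modulo a power of $1+i$ can certify non-squareness.
\end{itemize}
The paper's mod-$2$ justification of that row has the same defect. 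So for $c\equiv 1+i\pmod 2$, neither the row itself nor the paper's reduction of the unit issue to $c\equiv 0,1\pmod 2$ is actually justified.
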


\begin{center}
\begin{tabularx}{\textwidth}{Xl} 
\caption{Congruences that ensure $a_n$ is non-square for $n\ge2$}\label{tab:2a}
 \\ \hline
 $c\equiv i$, $1+i$ & $(\text{mod } 2)$ \\

$c\equiv -i$ & $(\text{mod } 2+i)$ \\
$c\equiv -1-i$ & $(\text{mod } 3+i)$ \\
$c\equiv i$, $\pm1 +i$ & $(\text{mod } 3+2i)$ \\
$c\equiv 1+2i$ & $(\text{mod } 4)$ \\

% $c\equiv 2- i$ & $(\text{mod } 4+i)$ \\
% $c\equiv i$, 2, -3, $-(1+i)$, $1-2i$ & $(\text{mod } 4+3i)$ \\
% $c\equiv 1$ & $(\text{mod } 4+4i)$\\
% $c\equiv -2+2i$, $-1-i$, $1-2i$  & $(\text{mod } 5)$ \\
% $c\equiv -2+i$, $\pm1-i$, $-i$, $1+2i$, $2+2i$ & $(\text{mod } 5+i)$  \\
% $c\equiv -i$, 2, $2+i$ & $(\text{mod } 5+2i)$ \\
% $c\equiv -2+2i$, $2+i$ & $(\text{mod } 5+3i)$ \\
% $c\equiv 1+3i$ & $(\text{mod } 5+4i)$ \\
% $c\equiv -2+2i$, $i$, $-3i$ & $(\text{mod } 6+i)$ \\
% $c\equiv -1 \pm 3 i$, $-i$, $2 - 3 i$ & $(\text{mod } 6+5i)$ \\
% $c\equiv 3\pm i$ & $(\text{mod } 7)$ \\

%$c\equiv -i, 1 + 2 i, 1 + 4 i, 2 + 3 i, 3 + i$ & $(\text{mod } 7+2i)$ \\
\\ \hline
\end{tabularx}
\begin{tabularx}{\textwidth}{Xl} 
\caption{Congruences that ensure $a_n$ is non-square for $n\ge2$, provided that $c+1$ is non-square.}\label{tab:2b}
 \\ \hline
$c\equiv i$ & $(\text{mod } 2+i)$ \\
$c\equiv -1\pm i$ & $(\text{mod } 3)$ \\
$c\equiv \pm i$, $\pm1+i$, $1-i$ & $(\text{mod } 3+i)$ \\
$c\equiv -i$, $-2$, $-2i$ & $(\text{mod } 3+2i)$ \\
$c\equiv -1+2i$, $\pm1-2i$ & $(\text{mod } 4)$ \\

$c\equiv -2$, $1+i$, $1+2i$, $2-i$ & $(\text{mod } 4+i)$ \\
$c\equiv \pm i$, $\pm2$, $\pm 3$, $\pm (1+i)$, $\pm(1-2i)$, & $(\text{mod } 4+3i)$ \\
$c\equiv 1$, 3 & $(\text{mod } 4+4i)$ \\
$c\equiv \pm(2-2i)$, $\pm(1+i)$, $\pm(1-2i)$ & $(\text{mod } 5)$ \\
$c\equiv -2, -2 \pm i, \pm1 - i, \pm i, 2 i, 1 \pm 2 i, \pm(2 + 2 i)$  & $(\text{mod } 5+i)$\\

\\ \hline
\end{tabularx}
\end{center}

\begin{proof}
Let $f(x)=x^2+1/c$. The sequence $\{f^n(0)=\frac{a_n(c)}{c^{2^{n-1}}}\}_{n>1}$ contains no squares in $\Qi$ if and only if the sequence $\{a_n(c)\}_{n>1}$ contains no squares in $\Zi$. When $a_n(c)$ is not a quadratic residue mod $\pi$, it must be non-square in $\Zi$. \Cref{tab:2a} consists of such pairs $z,\pi$ with $c\equiv z \mod \pi$ implying $a_n(c)$ is a non-residue mod $\pi$ for all $n\ge 2$: for example, when $c\equiv i \mod 2$, the relationship $a_k(c)=c^{2^{k-1}-1}+a_{k-1}(c)^2$ yields $$\{1, a_2=c+1, a_3, ... \}\equiv\{1, 1+i, i, 1+i, ...\} \bmod 2.$$   Since $\glegendre[2]{i}{2}=\glegendre[2]{1+i}{2}=-1$ (i.e. $i$ and $1+i$ are not quadratic residues mod 2), $a_k$ is non-square in $\Zi$ for all $k\ge 1$.\\

%and take $z$ to be a fixed Gaussian integer with $z\nmid c$. Let $k:= \Zi/z\Zi$, with $c_0\in k$ satisfying $(1/c)\equiv c_0 \bmod z$ and put $\bar{f}=x^2+c_0\in k[x]$. Now $a_n=c^{2^{n-1}}f^n(0),$ and it follows that if $\bar{f}^n(0)$ is not a square in $k$ for $n\ge 2$, then $a_n$ is not a square in $\Zi$. The sequence $(\bar{f}^n(0) \bmod z)_{n\ge 1}$ eventually lands in a repeating cycle. 

We now show that $a_p$ non-square implies $a_{kp}$ is non-square for $c\in\{0,1\}\bmod 2$ (since $c\in\{i,1+i\}\bmod 2$ are already handled). By rigid divisibility, $v_{\pi}(a_p)=v_{\pi}(a_{kp})$, so we can conclude that $a_{kp}$ is non-square in $\Zi$ if we know that $v_{\pi}(a_p)$ is odd.\\

When $c\equiv 0 \bmod 2$, and $p$ is an integer prime, $a_p$ either has a prime divisor of odd multiplicity or is of the form $i\alpha^2$ when it is non-square. To rule out the latter, note that $a_1(c)=1$ and for all $n\ge 2$, $a_n=a_{n-1}^2+c^{2^{n-1}-1}\equiv 1+0\equiv 1 \bmod 2.$ However, $i\alpha^2 \in \{0,i\}\bmod 2$ when $\alpha \in \Zi$. Since $a_n\equiv 1\bmod 2$ for all $n$, $a_p$ and thus $a_{kp}$ has this prime divisor to the same multiplicity, hence is non-square.\\

Now suppose $c\equiv 1 \bmod 2$. To show that $a_p\neq i\alpha^2$, we start with $p=2$. Then $a_2=c+1\equiv 0\bmod 2\implies a_{2n}\equiv 0 \bmod 2$ by rigid divisibility. We show $a_{2n}$ is non-square when $a_2$ is non-square: if $a_2=c+1$ has some prime divisor $\pi$ of odd multiplicity, rigid divisibility then implies that $a_{2n}$ is non-square for all $k\ge 1$.
Now suppose $c+1=i\alpha^2$. Since $a_{2n}(c)=(c+1)P_{2n}(c)$ over $\Z$, the only way $a_{2n}(c)$ is a square in $\Zi$ is if $P_{2n}(c)=i\alpha_1^2$ for some $\alpha_1 \in \Zi$. However, by rigid divisibility $v_{1+i}(a_{2n})=v_{1+i}(c+1)$, so $1+i\nmid P_{2n}(c)$. Since $P_{2n}(c)\in \Z[c]$, $1+i \nmid P_{2n}(c)$, and $c\equiv 1\bmod 2$, we must have $P_{2n}(c)\equiv 1\bmod 2$. Thus $P_{2n}(c)\neq i\alpha_1^2$, so $a_{2n}(c)$ is non-square.\\

Also, $a_{2n+1}\equiv c^{2^{2n}-1}+a_{2n}^2\equiv 1 \bmod 2$, so if $a_{2n+1}$ is non-square in $\Zi$, $a_{2n+1}\neq i\alpha^2$. We have thus shown $a_p(c)$ non-square implies $a_{kp}(c)$ is non-square when $c\in \{0,1\}\mod 2$.\\

The rest proceeds analogously to the proof of \cite[Proposition 3.5]{evstb}.

\end{proof}
\begin{remark}
    Since $c\equiv i \mod 2$ and $c\equiv 1+i \mod 2$ are in \cref{tab:2a}, $f_r$ is stable when $f_r^2$ is irreducible for such $c$. We may assume $c\in  \{0,1\} \mod 2$ for the rest of the paper.
\end{remark}

The proof of \cite[Proposition 3.3]{evstb} is also easily adapted to the Gaussian integer setting, and rigid divisibility extends this result to the claim that $a_{3n}$ is non-square in $\Zi$ for all $n\ge 1$:

%Proposition 3.3
\begin{proposition} 
\label{prop3.3}
If \
$c\in \Zi\backslash\{0, -1, \pm 2i, -89\}$, then $a_3$ is non-square in $\Zi$.
\begin{proof}
    We have $a_3(c)=c^3+c^2+2c+1$, and so if $a_3(c)=y^2_0$ for $y_0\in\Zi$, then necessarily $(c,y_0)$ is an integer point on the elliptic curve $y^2=x^3+x^2+2x+1$. This curve has conductor norm 2116 with label \href{https://www.lmfdb.org/EllipticCurve/2.0.4.1-2116.1-b2}{2.0.4.1-2116.1-b2} in the LMFDB \cite{lmfdb}. It has Mordell-Weil group generator $(-1:i:1)$ and torsion isomorphic to $\Z/3\Z$ with generator $(0:-1:1)$. We determine all $\Zi$-integral points using MAGMA \cite{magma} with code from Thongjunthug's thesis \cite{ellip}, :
    
    $$\{(x,\pm y)\} = \{(-1, i), (0,1), (-2i, 1 + 2i), (2 i, 1 - 2i), (-89, 835i)\}$$

    Note $c\in \{0,-1,\pm 2i, -89\}$ is excluded by hypothesis.
\end{proof}
\end{proposition}
\section{The proof of Theorem \ref{THM1.3}}
\quad The Gaussian integer analogue of Lemma 3.2 of \cite{evstb} is another important instance of the lack of a total ordering on $\Zi$ coming into play, with an identical proof (until the last line):
%Lemma 3.2
\begin{lemma} \label[lemma]{lem3.2} 
    Suppose that $r=1/c$ and $f^2_r(x)$ is irreducible. Let $a_n=a_n(c)$ and $$b_n^\pm(c)=i(a_{n-1}\pm \sqrt{a_n}).$$ If for every $n\ge 3$, $\{b_n^+(c), b_n^-(c)\}$ contains no squares in $\Zi$ (which holds if $a_n$ is non-square in $\Zi$), then $f_r^n(x)$ is irreducible for all $n\ge 1$.
\end{lemma}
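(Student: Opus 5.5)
The plan is induction on $n$, using Capelli's lemma on the top step of the tower and then two norm computations. The base cases $n=1,2$ hold because $f_r^2$ is irreducible by hypothesis, and this forces $f_r$ to be irreducible. Now fix $n\ge 3$ and assume $f_r^{n-1}$ is irreducible. Let $\theta$ be a root of $f_r^{n-1}$. Since $f_r^n(x)=f_r^{n-1}(f_r(x))$, Capelli's lemma (as in the proof of \cref{lemma2.2}) shows that $f_r^n$ is irreducible if and only if $x^2+r-\theta$ is irreducible over $\Qi(\theta)$. That in turn holds if and only if $\theta-r$ is a non-square in $\Qi(\theta)$.

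\textbf{Reduction to the quadratic field.} I will not push the norm all the way down to $\Qi$, since that only tests whether $f_r^n(0)$ is a square. Instead I stop at the quadratic subfield. Let $\beta$ be a root of $f_r$, so $\beta^2=-r$ and $L:=\Qi(\beta)$ is a quadratic extension because $c$ is not a square. Over $L$ we have
$$f_r^{n-1}(x)=\bigl(f_r^{n-2}(x)-\beta\bigr)\bigl(f_r^{n-2}(x)+\beta\bigr).$$
Irreducibility of $f_r^{n-1}$ over $\Qi$ forces both factors to be irreducible over $L$. Choose $\beta$ so that $\theta$ is a root of $f_r^{n-2}(x)-\beta$. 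Then $[\Qi(\theta):L]=2^{n-2}$, which is even for $n\ge 3$, so the sign in the norm disappears. Because this factor is monic, the norm is
$$N_{\Qi(\theta)/L}(\theta-r)=f_r^{n-2}(r)-\beta=f_r^{n-1}(0)-\beta.$$
If $\theta-r$ were a square in $\Qi(\theta)$, this norm would be a square in $L$. So it suffices to show that $f_r^{n-1}(0)-\beta$ is never a square in $L$. Doing this for each of the two conjugate choices $\pm\beta$ is where the two signs $b_n^\pm$ will come from.

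\textbf{Translating a square in $L$ into $b_n^\pm$.} Suppose $f_r^{n-1}(0)-\beta=(u+v\beta)^2$ with $u,v\in\Qi$. Comparing coefficients in the basis $\{1,\beta\}$ gives
$$u^2-rv^2=f_r^{n-1}(0)\quad\text{and}\quad 2uv=-1.$$
Eliminating $v$ yields the quadratic $4u^4-4f_r^{n-1}(0)u^2-r=0$, so
$$2u^2=f_r^{n-1}(0)\pm\sqrt{f_r^{n-1}(0)^2+r}=f_r^{n-1}(0)\pm\sqrt{f_r^n(0)}.$$
In particular $f_r^n(0)$, and hence $a_n$, must be a square. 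This recovers the parenthetical claim that the hypothesis holds when $a_n$ is a non-square.

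Next substitute $f_r^{k}(0)=a_k/c^{2^{k-1}}$ and clear the square denominator $c^{2^{n-2}}$; note $\sqrt{f_r^n(0)}=\sqrt{a_n}/c^{2^{n-2}}$. Then use $2=-i(1+i)^2$. This turns the condition into the statement that $i\bigl(a_{n-1}\pm\sqrt{a_n}\bigr)=b_n^\pm(c)$ is a square in $\Qi$, up to the sign convention for $i$, which I will fix so that it matches $b_n^\pm$. Since $b_n^\pm\in\Zi$ and $\Zi$ is integrally closed, being a square in $\Qi$ is the same as being a square in $\Zi$. This contradicts the hypothesis, so $f_r^n$ is irreducible, and the induction closes.

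\textbf{Main obstacle.} I expect the hardest part to be the bookkeeping in the last step. One must track the unit factors (the $-i$ coming from $2$, and whether $-\beta$ or $\beta$ is the relevant root) carefully enough to land exactly on $i(a_{n-1}\pm\sqrt{a_n})$ rather than on a unit multiple of it. Over $\Q$ positivity kills one sign, but over $\Zi$ both signs genuinely survive, which is why the statement requires both $b_n^+$ and $b_n^-$ to be non-squares.
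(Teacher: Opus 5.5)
Your proof is correct and is essentially the paper's argument, which it inherits from \cite[Lemma 3.2]{evstb}. That argument passes to $L=\Qi(\beta)=\Qi(\sqrt{r})$, uses Capelli plus the norm (that is, \cref{lemma2.2} over $L$ with $g=x-\beta$) to reduce to $f_r^{n-1}(0)-\beta$ being a non-square in $L$, and then converts that condition into $b_n^\pm$ being a non-square exactly as you do. The unit bookkeeping you worry about is harmless, since $2i=(1+i)^2$ gives $i(a_{n-1}\pm\sqrt{a_n})=((1+i)w)^2$ directly; also, the two signs come from the two roots of your quadratic in $u^2$, not from the choice of $\pm\beta$, because those are $\Gal(L/\Qi)$-conjugate and impose the same condition.
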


\begin{remark}
    The proof of the above \cite[Lemma 3.2]{evstb} shows that if $f^n_{1/c}(x)$ is irreducible and the set $\{b_{n+1}^+(c), b_{n+1}^- (c)\}$ contains no squares in $\Zi$, then $f^{n+1}_{1/c}(x)$ is irreducible for $n\ge2$.\\
\end{remark}

The following is a $\Zi$ analogue of \cite[Theorem 3.6]{evstb} giving simple criteria for $b_n^\pm(c)$ to be non-square for all $n\ge 3$:
%Theorem 3.6
\begin{theorem} \label[theorem]{thm3.6}
Let $f_r(x)=x^2+r$ with $r=1/c$ for $c\in \Zi\backslash\{0\}$. Let $a_n$ and $b_n^\pm$ as in \cref{bn}. Assume that $c$ satisfies one of the following conditions:

    \begin{enumerate}
        \item $c, c+1\in \Zi\backslash\Zi^2$, $2\nmid c$ and $16\nmid c+1$; \label{itm:1mod2}
        \item $c+1\in \Zi\backslash\Zi^2$, and $c\equiv \pm i\bmod a+bi$ with $N(a+bi)\equiv 5\mod 8$; \label{itm:imodp1}
        %\item $c\equiv -i\bmod p$ for a prime $p$ with $N(p)\equiv 5\bmod 8$ with $\Real(p)+\Imag(p)\equiv 3, 5 \bmod 8$; \label{itm:imodp2}
        %$c\equiv -i \bmod p$ for a prime $p\equiv 1+2i, 2+i\bmod 4$. 
        %$c\equiv i\bmod p$ for a prime $p\notequiv -1+2i, -2+i\bmod 4$, or $c\equiv -i \bmod p$ for a prime $p\equiv 1+2i, 2+i\bmod 4$. 
        \item $c\in \Zi\backslash\{0,-1, -1\pm 2i\}$ and $v_\pi(c)$ is odd for all primes $\pi$ dividing $c$. \label{itm:odd}
        %\item $v_p(c)$ is odd for some prime $p\neq 1+i$ dividing $c$. \label{itm:podd}
    \end{enumerate}
In \cref{itm:imodp1}, $a_n$ is non-square for $n\ge 2$, while in \cref{itm:1mod2,,itm:odd}, $\{b_n^+(c),b_n^-(c)\}$ contains no squares in $\Zi$ for all $n\ge 3$. In all cases, $f_r^n$ is irreducible for all $n\ge 1$.
\end{theorem}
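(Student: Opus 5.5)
In all three cases the target is the hypothesis of \cref{lem3.2}: $f_r^2$ is irreducible, and no $b_n^\pm(c)$ with $n\ge 3$ is a square in $\Zi$. Irreducibility of $f_r^2$ will come from \cref{prop2.1}. Since $f_r^2(0)=a_2/c^2=(c+1)/c^2$, \cref{lemma2.2} with $g=x$ shows that $f_r^2$ is irreducible once $-f_r(0)=-1/c$ and $f_r^2(0)$ are non-squares, that is, once $-c$ and $c+1$ are non-squares in $\Zi$. Because $-1=i^2$, the first condition is just $c\notin\Zi^2$. I will check these two conditions case by case; the special values excluded in \cref{itm:odd} are precisely where this check fails or where $c+1$ is a unit times a square.

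\textbf{Case \cref{itm:imodp1}.} Let $\pi=a+bi$ be a prime with $N(\pi)\equiv 5\bmod 8$ and $c\equiv\pm i\bmod\pi$. Reducing the recursion $a_n=a_{n-1}^2+c^{2^{n-1}-1}$ modulo $\pi$ with $c\equiv\pm i$ gives
\[
c^{2^{n-1}-1}\equiv (\pm i)^{-1}=\mp i \quad (n\ge 3),
\]
since $(\pm i)^{2^{n-1}}=1$ for $n\ge 3$. The sequence $a_n \bmod \pi$ therefore runs $1,\ 1\pm i,\ (1\pm i)^2\mp i=\pm i,\ (\pm i)^2\mp i=-1\mp i,\ (-1\mp i)^2\mp i=\pm i,\dots$, and it alternates between $\pm i$ and $-1\mp i$ from then on. The residue field is $\mathbb{F}_q$ with $q=N(\pi)\equiv 5\bmod 8$. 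In that field $i$ is a square exactly when $8\mid q-1$, so $\pm i$ are non-residues. Also $-1\mp i=\mp i(1\mp i)$, and $(1\mp i)^2=\mp 2i$ makes $1\mp i$ a square exactly when $\mp 2i$ is a fourth power; since $2$ is a non-residue for $q\equiv 5\bmod 8$, I expect $-1\mp i$ to be a non-residue as well. With this, $a_n$ is a non-residue mod $\pi$, hence a non-square, for every $n\ge 3$. For $n=2$ we have $a_2=c+1$, which is a non-square by hypothesis. A non-square $a_n$ makes $\sqrt{a_n}\notin\Zi$, so the $b_n^\pm$ are not even in $\Zi$. To complete the hypotheses of \cref{lem3.2}, I also need $c\notin\Zi^2$, and $\pm i$ is a non-residue mod $\pi$, so this holds.

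\textbf{Cases \cref{itm:1mod2} and \cref{itm:odd}.} Here I will follow the proof of \cite[Theorem 3.6]{evstb}. Assume $b=b_n^\pm$ is a square, say $b=\beta^2$. Then $\sqrt{a_n}=\pm(b/i - a_{n-1})$, so
\[
a_n=(a_{n-1}+ib)^2 ,
\]
and expanding against $a_n=a_{n-1}^2+c^{2^{n-1}-1}$ yields
\[
c^{2^{n-1}-1}=2ia_{n-1}b-b^2=b(2ia_{n-1}-b).
\]
Because $\gcd(a_{n-1},c)=1$, this factorization forces $\gcd(b,\,2ia_{n-1}-b)$ to divide $2$. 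In case \cref{itm:odd} we take $\pi\ne 1+i$; then each of $b$ and $2ia_{n-1}-b$ is, up to a unit, a $\pi$-power, and $b=\beta^2$ forces $v_\pi$ of the first factor to be even. The exponent $2^{n-1}-1$ is odd while $v_\pi(c)$ is odd, so $v_\pi(c^{2^{n-1}-1})$ is odd and the second factor carries an odd power of $\pi$. This yields an equation of the form $2ia_{n-1}=\beta^2+u\gamma^{2}\delta$ with $\delta$ built from the primes of $c$. Reducing modulo a suitable power of $1+i$ or modulo $c$, as in \cite{evstb}, should produce a contradiction except at $c\in\{-1,-1\pm2i\}$, which are the excluded values. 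In case \cref{itm:1mod2}, $c$ is odd, so $\gcd(b,2ia_{n-1}-b)=1$; hence $b$ is a unit times a $(2^{n-1}-1)$-th power dividing $c$'s factorization, and reducing $a_n=(a_{n-1}+ib)^2$ modulo $16$ against the explicit residues of $a_n$ for odd $c$ with $16\nmid c+1$ should give the contradiction.

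The step I expect to be the main obstacle is this last one: making the unit bookkeeping and the $(1+i)$-adic reductions work without the sign/ordering argument of \cite{evstb}. I would attempt it first by listing all residues of $a_n \bmod 16$ for odd $c$.
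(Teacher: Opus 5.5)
Your identity $c^{2^{n-1}-1}=b(2ia_{n-1}-b)$ is just $c^m=b_n^+b_n^-$ (with $m=2^{n-1}-1$), which is also the paper's starting point. However, the proposal has a false step in case (2) and leaves cases (1) and (3) unproved.

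\textbf{Case (2).} Your expectation that $-1\mp i$ is a non-residue modulo $\pi$ is wrong. We have $(-1-i)(-1+i)=2$, and $2$ is a non-residue in $\Zi/\pi\cong\mathbb{F}_p$ for $p=N(\pi)\equiv 5\bmod 8$. So for every such $\pi$, exactly one of $-1-i$, $-1+i$ is a residue. For example, modulo $2+i$ we have $i\equiv-2$, so $-1-i\equiv 1$. Thus for $c\equiv i\bmod 2+i$ (e.g.\ $c=1+4i$, where $c+1=2(1+2i)\notin\Zi^2$), every $a_{2k}$ with $k\ge2$ is a square modulo $\pi$. Reduction modulo $\pi$ therefore only disposes of the odd-index terms $a_{2k+1}\equiv\pm i$.

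The even-index terms need the hypothesis $c+1\notin\Zi^2$ for every $k$, not only for $n=2$. By rigid divisibility (\cref{anrigdiv}), $v_\rho(a_{2k})=v_\rho(a_2)$ for each prime $\rho\mid a_2$, so a prime of odd multiplicity in $a_2$ persists in $a_{2k}$. The remaining possibility $a_2=i\alpha^2$ is excluded by the parity argument in the proof of \cref{prop3.5}, with $c\equiv i,1+i\bmod 2$ covered by \Cref{tab:2a}. This is exactly how the paper argues. (Also, $a+bi$ need not be prime. Either use the Jacobi symbol as the paper does, or pass to a prime factor of norm $\equiv5\bmod8$; one exists because odd Gaussian primes have norm $\equiv1$ or $5\bmod 8$.)

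\textbf{Cases (1) and (3).} Here you give a plan rather than a proof, and it misses the decisive steps.

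For case (3), the useful consequence of $\gcd(b_n^+,b_n^-)\mid2$ is that each odd prime $\rho\mid c$ divides exactly one of $b_n^\pm$, with odd multiplicity $m\,v_\rho(c)$. Hence a square $b_n^\pm$ must be a unit times a power of $1+i$. (Your claim that each factor is ``up to a unit, a $\pi$-power'' is false when $c$ has several prime factors, and the equation $2ia_{n-1}=\beta^2+u\gamma^2\delta$ leads nowhere by itself.) What remains is a finite check that you never carry out:
\begin{itemize}
\item $b=0$ is impossible.
\item $b=\pm1$ gives $c^m=-1\pm2ia_{n-1}$, hence $c\mid-1\pm2i$ because $a_{n-1}\equiv1\bmod c$. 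The non-excluded associates are handled by \Cref{tab:2a} and \cref{prop1}.
\item $\pm i$ are non-squares.
\item For even $c$, the $(1+i)$-adic bookkeeping (using $b_n^+\equiv b_n^-\bmod2$) leaves only $b=\pm2i$. This forces $c\mid8$, or, via the constant term $2^{n-3}$ of $(a_{n-1}-1)/c$, $c\mid2^{n-1}$. The survivors $c=\pm2^k(1+i)$ then require the mod-$3$ periodicity of $a_n$ and \cref{prop3.3}.
\end{itemize}

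Your route to irreducibility of $f_r^2$ also fails in case (3). Take $c=3$: it satisfies the hypothesis, but $c+1=4$ is a square, so \cref{lemma2.2} with $g=x$ does not apply. You must instead use \cref{prop2.1} and observe that $c=\alpha^2(2i-\alpha^2)$ with all $v_\rho(c)$ odd forces $\alpha$ to be a unit, i.e.\ $c=-1\pm2i$. That, and not ``$c+1$ a unit times a square,'' is why those values are excluded. (The hypothesis of case (3) is also vacuously satisfied by $c=1$, where $x^2+1$ is reducible, so $c=1$ must be excluded as well; the paper's proof tacitly assumes $c\notin\Zi^2$.)

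For case (1), a 2-adic analysis is the right kind of idea. In the paper, though, the even indices again come from rigid divisibility, and only $b_{2k+1}^\pm$ is examined. The paper compares $a_{2k}\equiv c+1$ and $a_{2k+1}\equiv c$ successively modulo $4$, $8$, $16$, until $c\equiv-1\bmod16$ is the only class in which $b_{2k+1}^\pm$ could be a square. None of this is carried out in your proposal.
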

We break this theorem into three lemmas.

\subsubsection{Case (1)}
\begin{lemma}\label[lemma]{1mod2}
Suppose $c, c+1\in \Zi\backslash\Zi^2$. If $2\nmid c$ and $16\nmid c+1$, then 
$f_r^n$ is irreducible for all $n\ge 1$.
\end{lemma}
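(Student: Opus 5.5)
The plan is to reduce everything to \cref{lem3.2} and then use $(1+i)$-adic arguments. By the Remark after \cref{prop3.5} we may assume $c\equiv 1 \bmod 2$, since $c\equiv i,1+i \bmod 2$ are in \Cref{tab:2a}.

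\textbf{Step 1: $f_r$ and $f_r^2$ are irreducible.} Apply \cref{lemma2.2} with $g=x$ and critical point $0$. Here $-f_r(0)=-1/c$, and $-1=i^2$ is a square in $\Qi$, so $-1/c$ is a non-square exactly when $c$ is. Next, $f_r^2(0)=(c+1)/c^2$, which is a non-square because $c+1\notin\Zi^2$. By the Remark after \cref{lemma2.2}, $f_r$ and $f_r^2$ are therefore irreducible.

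\textbf{Step 2: reduction to odd $n$.} By \cref{lem3.2}, it suffices to show that for each $n\ge 3$ with $a_n$ a square in $\Zi$, neither $b_n^+$ nor $b_n^-$ is a square. The proof of \cref{prop3.5}, in the case $c\equiv1\bmod 2$ with $c+1$ non-square, already shows that $a_{2m}$ is a non-square for all $m$. So only odd $n\ge3$ remain. For such $n$ we have $a_n\equiv 1 \bmod 2$. On the other hand $n-1$ is even, so rigid divisibility (\cref{anrigdiv}) gives
\[
v_{1+i}(a_{n-1})=v_{1+i}(a_2)=v_{1+i}(c+1)=:e .
\]
Since $c\equiv 1\bmod 2$ we get $e\ge 2$, and the hypothesis $16\nmid c+1$ gives $e\le 7$.

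\textbf{Step 3: the $(1+i)$-adic contradiction.} Suppose $a_n=y^2$ with $y\in\Zi$; then $y$ is coprime to $1+i$. Suppose further that $b_n^{\pm}=i(a_{n-1}\pm y)=\beta^2$. The key relations are
\[
b_n^+b_n^-=-(a_{n-1}^2-y^2)=c^{M},\qquad b_n^++b_n^-=2ia_{n-1},\qquad M=2^{n-1}-1 \text{ odd}.
\]
Thus $b_n^+$ and $b_n^-$ are $(1+i)$-units whose sum has valuation exactly $e+2$, and whose product is $c^M$ with $c\equiv -1 \bmod (1+i)^e$. I would write $u=\pm iy$, so that $\beta^2\equiv u \bmod (1+i)^{e}$, and $y^2=c^M+a_{n-1}^2\equiv c^M \bmod (1+i)^{2e}$. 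Then I would compare this with the explicit description of squares of $(1+i)$-units modulo $(1+i)^k$ for $k\le 2e+2\le 16$. Concretely:
\begin{itemize}
\item an odd square is $\equiv \pm1,\pm(1+2i)$-type classes modulo $4$;
\item the unit group $(\Zi/(1+i)^k)^\times$ modulo squares is generated by $i$, $1+2i$ and $-1$-type classes.
\end{itemize}
The aim is to show that $y^2\equiv c^M$, together with $c+1$ having exact valuation $e$, forces $iy$ into a non-square class modulo $(1+i)^{e+2}$. Equivalently, $b_n^\pm$ would be $\equiv i\cdot(\text{unit square})$ or $(1+2i)\cdot(\text{square})$ at that level, which is impossible for $\beta^2$. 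Because $c^M$ with $M$ odd lies in the same square class as $c$ modulo high powers of $1+i$, the argument should depend only on $c$ modulo $(1+i)^{e+2}$, and not on $n$.

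The main obstacle is Step 3. It needs a careful case analysis over $e\in\{2,\dots,7\}$ and the finitely many classes of $c$ modulo $(1+i)^{e+2}$ (at most $2^{9}$ of them, so a computer check is an acceptable fallback). One must also verify that the bound $e\le 7$ is exactly what prevents $b_n^\pm$ from landing in the square class, presumably because $(1+i)^{8}=16$ is where odd squares stop being constrained at the relevant depth. I would first try to handle this uniformly using the relation $b_n^+ + b_n^- = 2ia_{n-1}$: if one of $b_n^\pm$ is a square, then their product $c^M$ and their sum, of exact valuation $e+2$, determine the other modulo $(1+i)^{e+2}$. Then I would check that the required square classes are incompatible unless $e\ge 8$.
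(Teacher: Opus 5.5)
Your Steps 1 and 2 match the paper's opening: irreducibility of $f_r$ and $f_r^2$ via \cref{lemma2.2}, and disposal of even $n$ via the proof of \cref{prop3.5}. Organizing Step 3 by $e=v_{1+i}(c+1)$ is also sensible, since the paper's case analysis modulo $2,4,8,16$ is in effect a split on $e$. For $e\le 6$ a purely local argument does work:
\begin{itemize}
\item for $e\in\{2,3\}$, $c\not\equiv -1\bmod 4$ forces $\sqrt{a_n}\equiv 1\bmod 2$, hence $b_n^\pm\equiv i\bmod 2$;
\item for $e=4$, $a_n\equiv c\equiv 3$ or $-1+4i\bmod 8$ is already a non-square;
\item for $e\in\{5,6\}$, $b_n^\pm$ falls in a non-square class mod $8$.
\end{itemize}

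But Step 3 is the whole content of the lemma, and you have only announced it. The expectation behind it---that $e\le 7$ is exactly what keeps $b_n^\pm$ out of the square class---is false at $e=7$. Let $U=\Z_2[i]^\times$ be the units of the $(1+i)$-adic completion, and recall that $1+(1+i)^5\Z_2[i]\subset U^2$ and $U^4=1+(1+i)^7\Z_2[i]$. For $e\ge 5$ you have $b_n^\pm=\pm iy\,(1\pm a_{n-1}/y)$, and the second factor is $\equiv 1\bmod (1+i)^5$. So $b_n^\pm$ is a local square iff $iy\in U^2$, iff $a_n=y^2\in -U^4$. Since $2e\ge 10\ge 7$ gives $a_n\in c^MU^4$, and $M\equiv 3\bmod 4$, this holds iff $-c\in U^4$, i.e. iff $e\ge 7$.

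Concretely, $e=7$ means $c\equiv 7+8i\bmod 16$. Then $y^2\equiv 7+8i\bmod 16$ forces $y\equiv\pm(4+3i)\bmod 8$, so $b_n^\pm\equiv\pm(-3+4i)\bmod 8$. Here $-3+4i=(1+2i)^2$ and $3-4i=(i-2)^2$, so these are genuine local squares. Hence no congruence modulo powers of $1+i$ can settle $c\equiv 7+8i\bmod 16$, and neither can any computer search over residue classes. Your method proves the statement only under $v_{1+i}(c+1)\le 6$, i.e. $8(1+i)\nmid c+1$. Your description of the local square classes is also off: odd squares modulo $4$ are only $\pm1$, and $-1=i^2$ is trivial in $U/U^2$.

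You should know that the paper's own argument has the same hole. In the case $c\equiv -1\bmod 8$ it asserts $\sqrt{a_{2k+1}}\in\{\pm i,\pm 3i\}\bmod 8$. This overlooks the other square roots $\pm(4+i)$ and $\pm(4+3i)$ of $-1$ modulo $8$.
\begin{itemize}
\item The branch $\pm(4+i)$, i.e. $c\equiv -1+8i\bmod 16$, is harmless: there $b_n^\pm\equiv\pm(-1+4i)\bmod 8$ is a non-square.
\item The branch $\pm(4+3i)$ is exactly the $e=7$ case above, so the paper's conclusion ``$c\equiv -1\bmod 16$'' does not follow.
\end{itemize}
To complete a proof you need one of two things. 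Either strengthen the hypothesis to $8(1+i)\nmid c+1$. Or supply a genuinely global argument excluding $b_n^\pm\in\Zi^2$ when $c\equiv 7+8i\bmod 16$, for instance one using the primes dividing $c$, as in \cref{case3}.
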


\begin{proof}
By \Cref{tab:2a}, we may assume $c\not\in\{i,1+i\} \bmod 2$. Note $f$ is irreducible since $c\not\in \Zi^2$ by \cref{prop2.1} and $a_2=c+1$ is non-square in $\Zi$. By the remark after \cref{lemma2.2}, we have that $f_r^2$ is irreducible.
%and $c\notin \Zi^2$ implies $c+1=(i\alpha^2+1)^2$.

Since $a_2=c+1$ is non-square by hypothesis and $c\in \{0,1\} \bmod 2$, note $a_{2n}$ (and hence $b_{2n}^\pm$) are also non-square by the proof of \cref{prop3.5}. We need only show $b_{2n+1}^\pm$ is non-square in $\Zi$.

%Note that $f_r$ is irreducible since $c$ is non-square mod $2$, hence $r=1/c$ is non-square in $\Qi$. For $c\equiv i \bmod 2$ we have the sequence $$\{a_n \bmod 2\}_{n\ge1}= \{1, 1+ i,  i,  1 + i, i, ...\} \bmod 2$$ and for $c\equiv 1+i \bmod 2$, we have the sequence $$\{a_n \bmod 2\}_{n\ge1}=\{1, i,  i, ...\}$$ indicating that $a_n$ is non-square mod 2 for all $n\ge 2$, thus $a_n$ is non-square in $\Zi$ for all $n\ge 2$. \\

Since $2\nmid c$ and $c\not\in \{i,1+i\}\bmod 2$, $c\equiv 1\bmod 2$. By the recurrence relation $a_k=a_{k-1}^2+c^{2^{k-1}-1}$, we must have $a_k\in \{0,1\} \bmod 2$ for all $k\ge 1$. If $a_k$ is non-square then $b_k^\pm$ is as well, so suppose $a_k$ is square. If $\sqrt{a_k}\in\{0,1\}\bmod 2$, then $$\sqrt{a_k}\equiv a_k \equiv a_{k-1}^2 + c^{2^{k-1}-1}\equiv a_{k-1} +1 \bmod 2.$$
Now,
 $$b_k^\pm \equiv i(a_{k-1} \pm \sqrt{a_k}) \equiv i(a_{k-1} + (a_{k-1} +1)) \equiv i \bmod 2,$$ so $b_k^\pm$ is non-square modulo 2, hence is non-square in $\Zi$. We now suppose $\sqrt{a_k} \in \{i, 1+i\} \bmod 2$. \\
 
%Now suppose $\sqrt{a_n}\in \{i, 1+i\} \bmod 2$.\\

%If $c\equiv 0 \bmod 2$, then $a_n= c^N + a_{n-1}^2(c) \equiv 1\bmod 2^N$ in this case. Then $\sqrt{a_n} \in \{1,i\} \bmod 2$, so suppose $\sqrt{a_n}\equiv i \bmod 2$, or else $b_n^\pm$ is non-square as shown above. Now $a_n \equiv 3\bmod 4$. This is impossible when $n\ge 3$ since $N=2^{n-1}-1 \ge 3$ in this case, so $a_n\equiv 1\bmod 8$. Thus $n=2$ is the only opportunity for $a_n\equiv 3\bmod 4$. Thus for $n\ge 3$, $\sqrt{a_n}\equiv 1 \bmod 2$, so $b_n^\pm \equiv i \bmod 2$ as before. Hence $b_n^\pm$ is non-square for all $n\ge 3$. \\

Note  $c\equiv 1\bmod 2$ implies $c\in \{\pm 1, \pm1 + 2i\} \bmod 4$ and $c+1\in \{0, 2, 2i, 2+ 2i\}\bmod 4$. Then $c^2\equiv 1 \bmod 4$, so $a_k=c^{2^{k-1}-1}+a_{k-1}^2 \equiv c+ a_{k-1}^2 \bmod 4$. By rigid divisibility of $\{a_k\}_{k\ge 1}$, we have $2\mid c+1 \implies 2\mid a_{2k}\implies 4\mid a_{2k}^2$, so combining $a_k\equiv c+a^2_{k-1}\bmod 4$ with this fact yields \begin{equation}\label{eq:anmod4}
    \{a_k\}_{k\ge1} \equiv \{1, c+1, c, c+1, c,...\}\bmod 4
\end{equation} \\

First suppose $\sqrt{a_k} \equiv 1+i \bmod 2$, so $a_k\equiv 2i \bmod 4$.
%Since $1+i\mid a_k$, we must have $1+i\nmid c$ since $\gcd(c,a_k)=1$ for all $k$. 
Since $c\equiv 1\bmod 2$, $c+1\equiv a_{2n} \equiv 2i \bmod 4$ by \cref{eq:anmod4}. Thus $c\equiv -1+2i\equiv a_{2n+1}\bmod 4$, so $c$ and $a_{2n+1}$ are non-square as well. Thus $a_k$ is non-square for all $k\ge 2$ when $\sqrt{a_k} \equiv 1+i \bmod 2$.\\

Now suppose $\sqrt{a_k}\equiv i\bmod 2$. Since $\sqrt{a_k}=2z+i$, we have $a_k\equiv -1 \bmod 4$. Since $c+1\equiv 0\bmod 2$, \cref{eq:anmod4} implies $c\equiv a_k\equiv -1\bmod 4$.  Then $c\in \{-1, -1+4i, 3, 3+4i\} \bmod 8$, so $c+1\in \{0, 4i, 4, 4+4i\} \bmod 8$. Again \begin{equation}\label{eq:anmod8}
    \{a_k\}_{k\ge1} \equiv\{1, c+1, c, c+1, c,...\}\bmod 8
\end{equation} 
 by analogous reasoning to \cref{eq:anmod4}.\\

 If $c\in \{-1+4i, 3\}\bmod 8$ we're done, these are non-square and would imply $a_{2k+1}$ is non-square in $\Zi$ by \cref{eq:anmod8}, thus $b_{2k+1}^\pm(c)$ is not a square in $\Zi$.
 
 If $c\equiv 3+4i \equiv(2+i)^2 \bmod 8$, then $a_{2k+1}\equiv 3+4i\bmod 8$ by \cref{eq:anmod8} and $$\sqrt{a_{2k+1}}\in \{-2-i,-2+3i, 2+i,2-3i\}\bmod 8$$ when $a_{2k+1}$ is a square in $\Zi$. Also, $c+1\equiv a_{2k}\equiv 4+4i\bmod 8$ and thus $$b_{2k+1}^\pm(c) \equiv i(4+4i \pm \sqrt{3+4i})\in \{-3+2i,-7+2i,-5+6i, -1+6i \} \bmod 8.$$
Since this set consists of non-squares mod 8, $b_{2k+1}^\pm$ is not a square in $\Zi$ when $c\not\equiv -1\bmod 8$.\\

If $c\equiv -1\bmod 8$, then $c\equiv a_{2k+1}\bmod 8$ and $a_{2k}\equiv c+1 \equiv 0\bmod 8$ by \cref{eq:anmod8}. Therefore $\sqrt{a_{2k+1}}\in \{\pm i, \pm 3i\}\bmod 8$. If $\sqrt{a_{2k+1}}\equiv \pm3i\bmod 8$, then we have $$b_{2k+1}^\pm(c) \equiv i(a_{2k}\pm \sqrt{a_{2k+1}})\equiv \pm3\bmod 8$$ and is thus non-square. Thus we may assume $\sqrt{a_{2k+1}}\equiv \pm i\bmod 8 \implies a_{2k+1}\equiv -1\bmod 16$. Also, $$\{a_k\}_{k\ge 1} \equiv \{1, c+1, c, c+1, ...\}\bmod 16$$ again by the same reasoning as \cref{eq:anmod4}.
%since $c$ has order 2 mod 16, and since $8\mid c+1\implies 8\mid a_{2k}$ by rigid divisibility of $\{a_k\}$. 
Thus $c\equiv a_{2k+1}\equiv -1 \bmod 16$. \\

Now, when $c,c+1\in \Zi\backslash\Zi^2$, and $2\nmid c$, we find that $c\not\equiv -1 \bmod 16$ must land into an above case. Thus $\{b_n^+ (c),b_n^-(c)\}$ contains no squares in $\Zi$ for each $n\ge 3$. Since $f^2_r$ is irreducible, $f_r^n$ is irreducible for all $n\ge 1$ by \cref{lemma2.2}.\\

\end{proof}

\begin{remark}
If $c\equiv -1 \bmod 16$, then $\sqrt{a_{2k+1}}\in \pm\{i, 7i\} \bmod 16$ with $$b_{2k+1}^+(c)\equiv i(a_{2k}+\sqrt{a_{2k+1}})\equiv \begin{cases}
    & i(0\pm 7i)\equiv \pm 7\\
    & i(0 \pm i)\equiv \pm 1\\
\end{cases}\bmod 16$$ (the same is true of $b_{2k+1}^-$) but $\pm1$ and $\pm 7$ are all squares mod 16: note $-1=i^2$, while $7\equiv (3i)^2 \bmod 16$. Thus the method of proof of \cref{1mod2} cannot conclude that $b_{2k+1}^\pm (c)$ is non-square for such $c$.
\end{remark}
\subsubsection{Case (2)}
\begin{lemma}
If $c+1\in \Zi\backslash\Zi^2$, and $c\equiv \pm i\bmod a+bi$ with $N(a+bi)\equiv 5\mod 8$, then 
$\{a_n\}_{n\ge 2}$ contains no squares in $\Zi$. Thus $f_r^n$ is irreducible for all $n\ge 1$. 
\end{lemma}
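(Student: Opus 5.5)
The plan is to reduce modulo a suitable Gaussian prime and read off the residues of $a_n$ there. For the even indices, rigid divisibility finishes the job exactly as in the proof of \cref{prop3.5}.

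\textbf{Step 1: choose the prime.} Since $N(a+bi)\equiv 5\bmod 8$ is odd, $1+i\nmid a+bi$. The remaining prime factors have norms $p\equiv 1\bmod 4$ (so $p\equiv 1$ or $5\bmod 8$) or $q^2\equiv 1\bmod 8$. Hence an odd number of prime factors $\pi\mid a+bi$ have $N(\pi)=p\equiv 5\bmod 8$, and I fix one such $\pi$. Then $c\equiv \pm i\bmod \pi$ and $\Zi/\pi\cong\mathbb{F}_p$ with $p\equiv 5\bmod 8$.

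In $\mathbb{F}_p$:
\begin{itemize}
\item $-1$ is a square, since $4\mid p-1$;
\item $2$ is a non-square, since $p\equiv 5\bmod 8$;
\item $\pm i$ are non-squares, because $i$ has order $4$ and a square root of $i$ would have order $8\nmid p-1$.
\end{itemize}
In particular $c$ is a non-square mod $\pi$, so $c\notin\Zi^2$ and $f_r$ is irreducible by \cref{prop2.1}. Since $a_2=c+1$ is non-square by hypothesis, the remark after \cref{lemma2.2} (with $g=x$) gives that $f_r^2$ is irreducible.

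\textbf{Step 2: odd indices.} Work mod $\pi$ with $c^2\equiv -1$ and $c^4\equiv 1$. For $n\ge 3$ we have $2^{n-1}-1\equiv 3\bmod 4$, so $c^{2^{n-1}-1}\equiv c^3\equiv -c$. Also $(c+1)^2=c^2+2c+1\equiv 2c$.

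Running the recurrence:
\begin{itemize}
\item $a_3\equiv -c+(c+1)^2\equiv c$;
\item $a_4\equiv -c+c^2\equiv -(c+1)$;
\item $a_5\equiv -c+2c\equiv c$.
\end{itemize}
Inductively $a_{2k}\equiv \pm(c+1)$ and $a_{2k+1}\equiv c\equiv \pm i$ for all $k\ge 1$. Therefore every $a_{2k+1}$ with $k\ge1$ is a non-residue mod $\pi$, hence non-square in $\Zi$.

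\textbf{Step 3: even indices (the main obstacle).} This step does not follow from the reduction mod $\pi$. Indeed $(c+1)^2\equiv 2c$ is a product of two non-residues, so $c+1$ is a residue, and $-1$ is a square, so $a_{2k}\equiv\pm(c+1)$ is a quadratic residue mod $\pi$.

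Instead I use the hypothesis that $a_2=c+1$ is non-square, together with rigid divisibility (\cref{anrigdiv}). If $c\in\{i,1+i\}\bmod 2$, \Cref{tab:2a} already makes every $a_n$ ($n\ge2$) non-square. Otherwise $c\in\{0,1\}\bmod 2$, and the argument in the proof of \cref{prop3.5} applies:
\begin{itemize}
\item If $c+1$ has a prime divisor of odd multiplicity, then so does $a_{2k}$.
\item If instead $c+1=i\alpha^2$ (necessarily with $c\equiv1\bmod 2$), then $a_{2k}=(c+1)P_{2k}(c)$ with $P_{2k}(c)\equiv 1\bmod 2$, which rules out $a_{2k}$ being a square.
\end{itemize}
The care needed here is in checking that this parity case analysis really covers $c\equiv 0\bmod 2$ as well; there $a_2=c+1$ is odd, so only the odd-multiplicity or unit-times-square dichotomy is needed.

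\textbf{Conclusion.} Steps 2 and 3 show that $\{a_n\}_{n\ge2}$ contains no squares in $\Zi$. The sequence $f_r^n(0)=a_n/c^{2^{n-1}}$ has square denominator for $n\ge2$, and $-f_r(0)=-1/c$ is non-square because $-1$ is a square and $c$ is a non-residue mod $\pi$. So \cref{lemma2.2} with $g=x$ gives that $f_r^n$ is irreducible for all $n\ge1$.
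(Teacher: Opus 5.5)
Your proof is correct and has the same structure as the paper's. The odd-indexed $a_{2k+1}$ are $\equiv \pm i$ modulo the given modulus and hence non-residues, and the even-indexed terms are handled by the rigid-divisibility argument from the proof of \cref{prop3.5}. The only real difference is how you show $\pm i$ are non-residues: you pass to a prime factor $\pi$ with $N(\pi)=p\equiv 5 \bmod 8$ and note that a square root of $i$ would have order $8\nmid p-1$. The paper instead quotes the formula $\glegendre{\pm i}{a+bi}=(-1)^{(a^2+b^2-1)/4}$ and argues the $a\leftrightarrow b$ swap, so your version is more self-contained.

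The motivational aside at the start of Step 3 is false, though the proof does not use it. The congruence $(c+1)^2\equiv 2c$ says nothing about whether $c+1$ itself is a residue. In fact $(1+i)(1-i)=2$ is a non-residue mod $\pi$, so exactly one of $1\pm i$ is a residue. For example, modulo $2+i$ one has $1-i\equiv 3$, a non-residue mod $5$; this is why $c\equiv -i \bmod 2+i$ appears in \Cref{tab:2a}. So the even-indexed terms are residues for only one of the two signs, which is still enough to make the rigid-divisibility step necessary.
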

\begin{proof}

Let $f_r(x)=x^2+r$ with $r=1/c$ for $c\in \Zi\backslash\Zi^2$. If $c\equiv i\bmod a+bi$ then \begin{equation}\label{eq:ana+bi}
    \{a_n(c)\bmod a+bi \}_{n\ge 1}\equiv  \{1, 1+i, i, -1-i, i, -1-i, ...\}.
\end{equation}
 
Note that $\glegendre{\pm i}{a+bi}=(-1)^{(a^2+b^2-1)/4}$ when $a$ is odd, $b$ is even by \cite[Theorem 17]{recip}. I claim the same is true when $a$ and $b$ are reversed: note that 
\begin{alignat*}{2}
& i=\gamma^2+z(a+bi) \iff -i &&=\bar{\gamma}^2 +\bar{z}(a-bi)\\
& &&=\bar{\gamma}^2 +\bar{z}(-i\cdot i)(a-bi)\\
& &&= \bar{\gamma}^2 -i\bar{z}(b+ai),
\end{alignat*}
    
so 
\begin{equation}\label{eq:pmi}
\begin{split}
        \glegendre{\pm i}{a+bi}=\glegendre{\pm i}{b+ai}=(-1)^{(a^2+b^2-1)/4}
    \end{split}
\end{equation} when $a$ and $b$ have opposite parity. \\

Thus $N(a+bi)=a^2+b^2\equiv 5\mod 8$ implies $\glegendre{\pm i}{a+bi}=-1$ by \cref{eq:pmi}. Thus $a_{2k+1} \equiv i \bmod a+bi$ is non-square for all $k\ge 1$ by \cref{eq:ana+bi}.\\

Since $c+1=a_2$ is non-square by hypothesis, the proof of \cref{prop3.5} shows that $a_{2n}$ is non-square for all $n\ge 1$. Thus $a_k$ is non-square for all $k\ge 1$ when $c\equiv i\bmod a+bi$. \\

In the case $c\equiv -i\bmod a+bi$ we have $$\{a_n(c)\}_{n\ge 1}\equiv \{1, 1-i, -i, -1+i, -i, -1+i, ...\}\bmod a+bi,$$
so the claim is still true following the above proof.\\ 

Since $c\equiv \pm i \bmod a+bi$ by hypothesis, $c$ is non-square mod $a+bi$, so $c$ is non-square in $\Zi$. Thus $f_r$ is irreducible. Since $\{a_n\}_{n\ge 2}$ contains no squares in all cases, $f^n_r$ is irreducible for all $n\ge 1$ by \cref{lemma2.2}. \\

\end{proof}

\begin{remark}
    This theorem can be used to show that if $c\in \Z$ with $c\equiv 2\bmod 4$, then $f_{1/c}^n$ is irreducible for all $n$. Note $c\equiv -i\bmod c+i$ and $N(c+i)=c^2+1 \equiv 5 \mod 8$. The hypothesis that $c+1$ is non-square is satified since $c+1\equiv 3\mod 4$, but squares in $\Z$ are $0$ or $1\bmod 4$.
\end{remark}
%3case
\subsubsection{Case (3)}
\begin{lemma}\label[lemma]{case3}
If $c\in \Zi\backslash\{0,-1, -1\pm 2i\}$ and $v_\pi(c)$ is odd for all primes $\pi$ dividing $c$, then 
 $f_r^n$ is irreducible for all $n\ge 1$. %then 
% $\{b_n^+(c)\}_{n\ge 3}\cup \{b_n^-(c)\}_{n\ge 3}$ contains no squares in $\Zi$, so .
\end{lemma}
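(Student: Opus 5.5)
The plan is to verify the hypotheses of \cref{lem3.2}: first that $f_r$ and $f_r^2$ are irreducible, and then that $\{b_n^+(c),b_n^-(c)\}$ contains no squares in $\Zi$ for every $n\ge 3$. The argument uses two elementary identities satisfied by the pair $b_n^\pm$. Write $N=2^{n-1}-1$, which is odd, and $s=\sqrt{a_n}$. Then
$$b_n^+b_n^-=-(a_{n-1}^2-a_n)=c^{N},\qquad b_n^++b_n^-=2i\,a_{n-1}.$$
In addition, $a_k\equiv a_{k-1}^2\bmod c$ and $a_1=1$, so $a_k\equiv 1\bmod c$ for all $k$. In particular $\gcd(a_{n-1},c)=1$.

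\textbf{Step 1: irreducibility of $f_r$ and $f_r^2$.} If $c$ is a non-unit, some prime has odd valuation in $c$, so $c\notin\Zi^2$ and $f_r$ is irreducible by \cref{prop2.1}. The units $c=\pm i$ are covered by \Cref{tab:2a}, since $-i\equiv i\bmod 2$, and $c=-1$ is excluded. The unit $c=1$ satisfies the hypothesis only vacuously and must be set aside, because $x^2+1$ is reducible. For $f_r^2$, suppose $c=\alpha^2(2i-\alpha^2)$. Any common divisor of $\alpha$ and $2i-\alpha^2$ divides $2i$. Hence every prime $\pi\ne 1+i$ dividing $\alpha$ has $v_\pi(c)=2v_\pi(\alpha)$, which is even. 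So $\alpha=u(1+i)^k$ with $u$ a unit.
\begin{itemize}
\item If $k=0$, then $c=\pm(2i\mp1)$, i.e.\ $c\in\{-1+2i,\,-1-2i\}$, and both are excluded.
\item If $k=1$, then $c\in\{0,-8\}$. The first is excluded and the second has even $(1+i)$-valuation.
\item If $k\ge 2$, then $v_{1+i}(2i-\alpha^2)=2$, so $v_{1+i}(c)=2k+2$ is even.
\end{itemize}
Thus $f_r^2$ is irreducible by \cref{prop2.1}.

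\textbf{Step 2: splitting the odd part of $c^N$ between $b^+$ and $b^-$.} Suppose $a_n$ is a square; otherwise we are done. Since $\gcd(a_{n-1},c)=1$, any common divisor of $b^+$ and $b^-$ divides $2i\,a_{n-1}$ and $c^N$, hence is a power of $1+i$. Let $\pi\ne 1+i$ be a prime dividing $c$. Then $\pi$ divides exactly one of $b^\pm$, and to the full power $N v_\pi(c)$, which is odd. Therefore, if (say) $b^+=\beta^2$, then $b^+$ has no odd prime factors at all: $b^+=w$ is a unit times a power of $1+i$, and $b^-=c^N/w$.
\begin{itemize}
\item \emph{Case $1+i\nmid c$.} Then $w$ is a unit, and being a square it equals $\pm1$. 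Reducing $w+c^N/w=2i\,a_{n-1}\equiv 2i\bmod c$ gives $c\mid \pm1-2i$. So $c$ is a unit or an associate of $1\pm 2i$. The odd-valuation condition together with the exclusions $-1\pm2i$ and the unit cases leaves only finitely many $c$, and I would check these directly.
\item \emph{Case $v=v_{1+i}(c)$ odd.} Since $a_n\equiv 1\bmod c$, $a_{n-1}$ is odd, so $v_{1+i}(b^++b^-)=2$. The valuations $v_{1+i}(b^+)$ and $v_{1+i}(b^-)$ sum to $Nv$, which is odd, so they are unequal, and the smaller one equals $2$. Since $v_{1+i}(w)$ is even, there are two subcases.
  \begin{itemize}
  \item $v_{1+i}(w)=2$: then $w=\pm 2i$ up to squares of units.
  \item $v_{1+i}(w)=Nv-2$: then the odd part of $c$ is trivial, so $c=u(1+i)^v$.
  \end{itemize}
\end{itemize}

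\textbf{Step 3: the main obstacle.} The hard part is the subcase $w=\pm 2i$. There $a_{n-1}=(w+c^N/w)/(2i)$ gives $a_{n-1}=\pm1\mp c^N/4$ up to the sign conventions. I would contradict this by expanding the recurrence $a_k$ modulo $c^2$, which gives $a_{k}\equiv 1+2^{k-2}c\bmod c^2$ for $k\ge 2$. Since $N\ge 3$ and $v_{1+i}(c)\ge 1$, the identity forces $a_{n-1}\equiv \pm1\bmod c^2$. That would require $c\mid 2^{n-3}$ or $c\mid 2$, so $c$ is a unit times a power of $1+i$, and this merges with the second subcase. For $c=u(1+i)^v$ with $v$ odd, I would finish with congruences modulo $4$ or $8$, in the style of \cref{1mod2}, together with the small cases $c=u(1+i)$, checked via $a_3$ and \cref{prop3.3}. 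Once no $b_n^\pm$ is a square for $n\ge3$, \cref{lem3.2} gives that $f_r^n$ is irreducible for all $n\ge 1$.
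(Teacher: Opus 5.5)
Your outline matches the paper's proof through the reduction step. You use \cref{lem3.2} and the factorization $b_n^+b_n^-=c^N$ with $\gcd(b_n^+,b_n^-)$ a power of $1+i$, so each odd prime of $c$ lands in one factor with odd multiplicity. You exclude $b_n^\pm=\pm1$ via $c\mid\pm1-2i$. When $1+i\mid c$, you turn $b_n^\pm=\pm2i$ into $c^N=4(1\mp a_{n-1})$ and use $a_k\equiv1+2^{k-2}c\bmod c^2$. Your Step 1 is more careful than the paper's. You are also right that $c=1$ satisfies the hypotheses vacuously while $x^2+1=(x+i)(x-i)$ is reducible, so the statement needs $c\neq1$; the paper's proof misses this.

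The gap is the case you call the main obstacle: $v=v_{1+i}(c)$ odd (so $v\ge3$, since $v=1$ means $c\equiv1+i\bmod2$) and one of $b_n^\pm$ equal to $\pm2i$. ``Congruences modulo $4$ or $8$ in the style of \cref{1mod2}'' cannot close it. Since $\pm2i$ is a square, it is a square residue modulo everything. Concretely, for $v\ge3$ and $n\ge5$ you get $a_{n-1}\equiv a_n\equiv1\bmod8$ and $\sqrt{a_n}\equiv\pm1\bmod4$, so $\{b_n^+,b_n^-\}\equiv\{0,2i\}\bmod4$, which is consistent with $b_n^\pm=\pm2i$. More fundamentally, both sides of $c^N=4(1-a_{n-1})$ become divisible by any fixed power of $1+i$ once $n$ is large, so no such congruence yields a contradiction; you need information that grows with $n$.

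The paper gets this by working modulo $3$. There $c\equiv\pm(1+i)$ has order $8$ and $a_k\bmod3$ is eventually periodic, and table look-ups handle $c\mid8$. Within your framework, exact valuations do it in a few lines:
\begin{itemize}
\item For $v\ge3$, induct on $a_k-1=c^{2^{k-1}-1}+(a_{k-1}-1)(a_{k-1}+1)$, using $v_{1+i}(a_{k-1}+1)=2$. This gives $v_{1+i}(a_k-1)=v+2(k-2)$ for $k\ge2$.
\item If $c^N=4(1+a_{n-1})$, the left side has $(1+i)$-valuation $Nv$, which is odd, and the right side has valuation $6$. Contradiction.
\item If $c^N=4(1-a_{n-1})$, you need $Nv=v+2n-2$, i.e.\ $(2^{n-2}-1)v=n-1$. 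This is impossible for odd $v\ge3$ and $n\ge3$.
\end{itemize}
This settles every $c$ with $v_{1+i}(c)$ odd and $\ge3$ at once, with no need to first reduce to $c\mid2^{n-3}$.

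Smaller points:
\begin{itemize}
\item The subcase $v_{1+i}(w)=Nv-2$ cannot occur, since $Nv-2$ is odd and $w$ is a square. Your claim that it makes the odd part of $c$ trivial is unfounded but moot.
\item $\alpha^2=-2i$ gives $c=8$, not $-8$.
\item Deducing $a_{n-1}\equiv\pm1\bmod c^2$ needs $(N-2)v_{1+i}(c)\ge4$, which fails for $n=3$ and $v_{1+i}(c)\le3$. This is harmless: \cref{prop3.3} covers $n=3$, and its only exception with odd valuations, $c=-89$, has $1+i\nmid c$ and falls in your first case.
\item The finite list left by your case $1+i\nmid c$, namely $c\in\{1\pm2i,\pm2\pm i\}$, is covered by \Cref{tab:2a}: $\pm2\pm i\equiv i\bmod2$ and $1\pm2i\equiv1+2i\bmod4$.
\end{itemize}
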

\begin{proof}
By \Cref{tab:2a}, we may assume $c\in\{ 0,1\} \bmod 2$. Also, note that $c\neq \alpha^2(2i-\alpha^2)$ unless $\alpha$ is a unit in $\Zi$ with $c=-1\pm 2i$, contrary to our hypothesis. We may thus assume that $f^2_r$ is irreducible. We proceed by induction on $n$, assuming $f^{n-1}_r$ is irreducible to prove that $f^n_r$ is irreducible. \\

We now follow this basic outline: Suppose $a_n$ is a square in $\Zi$. We first show $b_n^\pm \not\in \{0,\pm 1\}$ for all $c$ with $|c|> \sqrt{5}$. We also show $b_n^\pm \neq \pm 2i$ unless $c$ divides a power of $2$ (we handle this case separately); it will follow that both $b_n^+$, and $b_n^-$ have a prime divisor of odd multiplicity (i.e. are non-square).\\

Thus, suppose $a_n(c)$ is a square in $\Zi$ (if $a_n(c)$ is non-square in $\Zi$ then $f^n_r$ is irreducible by the proof of \cref{lemma2.2}). Then $b_n^\pm$ are $\Zi$-factors of $c^{2^{n-1}-1}$: $$a_n-a^2_{n-1} =c^{2^{n-1}-1}=(\sqrt{a_n}-a_{n-1})(\sqrt{a_n}+a_{n-1}) = i(a_{n-1}-\sqrt{a_n})i(a_{n-1}+\sqrt{a_n})=b_n^-b_n^+.$$

Also, note that $$\gcd(b_n^-, b_n^+)=\gcd(i(a_{n-1}-\sqrt{a_n}),i(a_{n-1}+\sqrt{a_n})) \mid \gcd(2ia_{n-1}, 2i\sqrt{a_n})=2$$ since $\gcd(a_n, a_{n-1})=\gcd(c^m, a_{n-1})=1$. When $c\equiv 1\bmod 2$, $c$ is coprime to $2$, so $\gcd(b_n^-, b_n^+)=1$.\\ % however, we will show $b_n^\pm$ is not a square in $\Zi$ for all $n\ge 3$ in this case while 

We first show that we may assume $b_n^{\pm}(c) \not\in \{0,\pm 1\}$. To show $b_n^\pm \neq0$, suppose $b_n^\pm =0$. Then $\sqrt{a_n} =\pm a_{n-1}$, so $a_n=a^2_{n-1} +c^{2^{n-1}-1}= a^2_{n-1}$. Since $c\neq 0$, this is impossible.\\

If $b_n^\pm =\pm 1$, then $a_{n-1}\pm \sqrt{a_n} = \pm i$. Then $a_n= -1 \mp 2ia_{n-1} +a^2_{n-1}$, so $$a^2_{n-1} +c^{2^{n-1}-1}= -1 \mp 2i a_{n-1} +a^2_{n-1}\implies c^{2^{n-1}-1} = -1 \pm 2i a_{n-1}(c).$$ Since $a_{n-1} \equiv 1 \bmod c$, we must then have $0 \equiv -1\pm 2i \bmod c$, so $c=\mu \cdot (-1\pm 2i)$ for some $\mu \in \Zi^\times$ since $-1\pm 2i$ is prime in $\Zi$. However, $c= \pm2+ i$ and $c=1+2i$ are found in \Cref{tab:2a} as pairs $(c \bmod z,z) = (i, 2)$ and $(1+2i, 4)$ respectively, so $a_n$ is non-square for all $n\ge 2$ for these $c$. Thus $f_{1/(\pm 2+i)}$, and $f_{1/(1+2i)}$ are stable by \cref{lemma2.2}. Note $f_r$ for $c=1-2i$ has the same behavior by \cref{prop1}. Thus we may assume $b_n^\pm \not\in \{0,\pm 1\}$.\\ %When $c= -1\pm 2i$, we have $\{a_n \bmod 3\}_{n\ge 1} \equiv \{1, \mp i, 1\mp i, 1\mp i, ...\}$ and $1\pm i$ is non-square mod 3, hence $a_n(-1\pm 2i)$ and thus $b_n^\pm (-1\pm 2i)$ is non-square for all $n\ge 3$.\\
%Such $c$ has $v_{1+i}(c)=0$, contrary to our hypothesis that $v_{1+i}(c)$ odd since $c\equiv_2 0$. However, $c\in \pm \{1\pm 2i, 2\pm i\}$ has $c\not\equiv 0 \bmod 2$, so \ref{itm:n0mod2} yields $b_n$ is non-square for all $n\ge 3$.\\

First suppose $c\equiv 1 \bmod 2$. We've shown that we may assume $b_n^\pm \not\in \{0,\pm1\}$. If say $b_n^+ \in \{-i, i\}$, then $b_n^+$ is non-square in $\Zi$, with $b_n^+ \equiv b_n^- \equiv i \bmod 2$, so $b_n^\pm$ is non-square in $\Zi$.\\

Now suppose $|b_n^\pm|>1$. Then $b_n^\pm$ has a prime divisor $\pi$ with $\pi\nmid b_n^\mp$, since $\gcd(b_n^-, b_n^+)=1$.  This prime divisor is of odd multiplicity since $v_\pi(b_n^\pm) = v_\pi(c^m)= m\cdot v_\pi(c)$ and both $m$ and $v_\pi(c)$ are odd by hypothesis. Thus $b_n^\pm$ is non-square.\\

%By hypothesis we have $v_{1+i}(c)\ge 3$, since if $v_{1+i}(c)=1$ then $c\equiv 1+i \bmod 2$ from \Cref{tab:2a}. Again, if $a_n\in \Zi^2$, then $b_n^\pm$ are $\Zi$-factors of $c^{2^{n-1}-1}$: $$a_n-a^2_{n-1} =c^{2^{n-1}-1}=b_n^-b_n^+.$$
%with $\gcd(b_n^-, b_n^+)\mid 2$. Again, since $\gcd(b_n^+, b_n^-) \mid 2$ then for a prime $\pi \neq 1+i$ we have either $\pi\mid b_n^+$ or $\pi \mid b_n^-$ but not both.\\

Now suppose $c\equiv 0\bmod 2$. We now show $b_n^\pm(c) \neq \pm 2i$ unless $c\mid 8$ or $c\mid 2^{n-1}$. The rest of the proof will be analogous to the $c\equiv 1\bmod 2 $ case above.\\ 

Suppose $b_n^\pm(c) = \pm2i$. Then $$a_{n-1}\pm \sqrt{a_n}= \pm2 \implies a_n= 4\mp 4a_{n-1} + a^2_{n-1}.$$ %so $\pm \sqrt{a_n}= \pm2 - a_{n-1}$, 

Therefore $c^{2^{n-1}-1}+a^2_{n-1} =4(1\pm a_{n-1})+a^2_{n-1}$, so $$c^{2^{n-1}-1}=4(1\pm a_{n-1}).$$

Since $c^{2^{n-1}-1}=4(1\pm a_{n-1})$, first suppose $c^{2^{n-1}-1}=4(a_{n-1}+1)$. Since $a_{n-1} \equiv 1 \bmod c$, then $4( a_{n-1}+1)\equiv 8 \equiv c^{2^{n-1}-1} \equiv 0 \bmod c$, so $c\mid 8=i(1+i)^6$.\\

Otherwise, we have $c^{2^{n-1}-1}=4(1-a_{n-1})$. Note that the constant term of $\frac{a_{n-1}(c)-1}{c}$ is $2^{n-3}$ for $n\ge 3$ by induction: when $n=3$, we have $\frac{a_2(c)-1}{c} = \frac{(c+1)-1}{c}=1$. Now suppose $k\ge 3$, and $\frac{a_{k-1}(c)-1}{c}$ has a constant term of $2^{k-3}$ to show $\frac{a_k(c)-1}{c}$ has constant term $2^{k-2}$. In particular, by the hypothesis we know $$a_{k-1}(c)=p_{k-1}(c) +2^{k-3} c + 1 $$ for some polynomial $p_{k-1}(c)$ divisible by $c^2$. Then 
    \begin{align*}
        a_k(c)=c^{2^{k-1}-1}+a^2_{k-1}(c)&=c^{2^{k-1}-1}+(p_{k-1}(c) +2^{k-3} c + 1)^2\\
        &= c^{2^{k-1}-1} + p^2_{k-1} + 2p_{k-1} \cdot 2^{k-3}c + 2p_{k-1} +(2^{k-3}c)^2+2\cdot 2^{k-3}c + 1\\
        &= p_k(c) + 2^{k-2} c +1
    \end{align*}

where $p_k$ is a polynomial divisible by $c^2$ which was the claim to be shown. Then $$0\equiv c^{2^{n-1}-2} \equiv 4\frac{1-a_{n-1}(c)}{c} \equiv 4 \cdot -2^{n-3} \equiv -2^{n-1} \bmod c$$ since the constant term of $\frac{a_{n-1}(c)- 1}{c}$ is $2^{n-3}$. Therefore $c\mid 2^{n-1}$ when $c^{2^{n-1}-1}=4(1-a_{n-1})$.\\

We now show that if $c$ divides a power of 2 (with $v_{1+i}(c)$ odd), then $b_n^\pm(c)$ is not a square in $\Zi$.
\begin{lemma}\label[lemma]{1+i}
If $c$ divides a power of 2 with $v_{1+i}(c)$ odd, then $b_n^\pm (c)$ is not a square in $\Zi$.
\end{lemma}
\begin{proof}

 If $v_{1+i}(c)$ is odd but $v_\pi(c)=0$ for all primes $\pi\neq 1+i$, then $c$ or $\bar{c}$ is of the form $\pm 2^k(1+i)$, both non-square so $f_r$ is irreducible. By \cref{prop1}, we assume $c=\pm 2^k(1+i)$.
 
 %$f_r^n$ has the same number of factors over $\Qi$ as $f_{\bar{r}}^n$, so from now on 
 
 %Note that by the remark following \cref{lem3.2}, when $c\not\in \R\cup i\R$ we have $b_n^\pm(\bar{c})=i(a_{n-1}(\bar{c})\pm\sqrt{a_n(\bar{c})})= \overline{-b_n^\pm(c)}$, so $b_n^\pm(\bar{c})$ is square in $\Zi$ if and only if $b_n^\pm(c)$ is. From now on we assume $c=\pm 2^k(1+i)$.\\

%It is easily shown that $c+1=\pm2^k+1\pm(2^k)i$ is non-square when $c\neq -4-4i$.

%if $$\pm2^k+1\pm(2^k)i=(x+iy)^2= x^2-y^2+2xyi,$$ then $2xy=2^k$, so $x$ or $y$ is $\pm1$ or else $x^2-y^2$ is even. If  $x=\pm 2^{k-1}$, $y=\pm 1$, then $$x^2-y^2=2^{2k-2} - 1 \neq \pm2^k+1=\Real(c+1)$$ for any integer $k\ge 1$. 
    
%     However, if $x=\pm1$ and $y=\pm 2^{k-1}$, then $$x^2-y^2=1-2^{2k-2} \neq \pm2^k +1=\Real(c+1)$$ unless $c=-2^k(1+i)$ and $k=2$. Then $c=-4-4i$ where $a_2=c+1=-3-4i=(-1+2i)^2$ is square.

%However, $-4-4i$ is not of the form $-m^2(m^2\pm2i)$, so $f^2$ is still irreducible since $f$ is irreducible by \cref{prop2.1}.\\
    % Note $a_2=c+1=-3-4i=(1-2i)^2$. We show that $b_2^\pm (-4-4i)$ is non-square: $$b_2^\pm(-4-4i)= i(a_1\pm \sqrt{a_2})=i(1\pm \sqrt{-3-4i})=\begin{cases}
    % i(1+1-2i)= 2+2i \\
    % i(1-(1-2i))=-2
    % \end{cases}\in \Zi\backslash\Zi^2.$$
    
    It is readily shown that $c+1=\pm2^k+1\pm(2^k)i$ is non-square when $c\neq -4-4i$. Note that $a_n(-4-4i)\equiv -1-i \bmod 3$. If $c\equiv -1-i\bmod 3$, we obtain the list $$\{a_n\}_{n\ge 1}\equiv \{1, -i, 1+i, 1+i, ...\}\bmod 3$$ and note that $1+i$ is non-square mod 3. Thus $a_n$ is non-square for $n\ge 3$, so $b_n^{\pm}$ is non-square for all $n\ge 3$. \\%It follows that $b_n^\pm(-4-4i)$ is non-square for all $n\ge 2$.  \\
    
    Now we may assume $a_2=c+1$ is non-square. Note $c\equiv 0\bmod 2$ implies we have $c+1\equiv 1 \bmod 2$. Since $a_2$ is non-square, it is not of the form $i\alpha^2$ since $i\alpha^2\in \{ 0, i\}\bmod 2$, hence $a_2$ has a prime divisor of odd multiplicity.\\
    
    Also note that $c=\pm2^k(1+i)\equiv \pm(1+i)\bmod 3$. When $c\equiv -1-i\bmod 3$, we've shown above that $b_n^\pm(c)$ is not a square in $\Zi$ for all $n\ge 3$.\\ %Since we may assume $c+1$ is non-square, we have $b_n^\pm(c)$ is not a square for all $n\ge 2$. \\
    
   From here on assume $c\equiv 1+i\bmod 3$. We show that $b_n^\pm \neq \pm 2i$; it will follow that $b_n^\pm$ is non-square. Suppose for now that $b_n^\pm \neq \pm 2i$ to show this implication. Recall that $a_n=a_{n-1}^2+c^m$ for $m=2^{n-1}-1$. Note $\gcd(b_n^-, b_n^+) \mid 2$ and $c^m=\pm i^{km}(1+i)^{(2k+1)m}=b_n^-b_n^+$  with $b_n^\pm\notin \{\pm 1, \pm 2i\}$. Suppose without loss of generality that $v_{1+i}(b_n^+)=2$. Since $b_n^+\neq \pm2i$, we then must have $b_n^+=\pm2$ and $v_{1+i} (b_n^-)$ odd (since $m$ and $2k+1$ are odd), so both $b_n^\pm$ are non-square.\\

If $v_{1+i} (b_n^+) =1$, then $b_n^+\equiv b_n^- \equiv 1+i \bmod 2$, so both $b_n^\pm$ are non-square.\\

Note that $v_{1+i} (b_n^+)=0$ is a contradiction since $b_n^-\equiv b_n^+ \equiv 1 \bmod 1+i$ while $$b_n^-\cdot b_n^+=c^m\equiv 0 \bmod 1+i.$$

In all cases, $b_n^\pm$ is non-square.\\

Recall that if $b_n^\pm = \pm 2i$, we have $c^m= 4(1\pm a_{n-1}) \implies c\mid 2^{n-1}$ or 8. We first show that the $c\mid 8$ cases pose no obstruction, obtained when $c^m=4(1+ a_{n-1})$.

%When $v_{1+i}(c)$ odd, we saw that we may assume $c$ is of the form $\pm 2^k(1+i)$, so $c$ is $\pm(1+i)\mod 3$. We also assume $c\equiv 1+i\bmod 3$.\\

% $i(a_{n-1}\pm \sqrt{a_n})=\pm2i$, so $a_{n-1}\pm \sqrt{a_n}=\pm 2$. Then $a_n= a^2_{n-1}\pm 4a_{n-1} +4$, and $a_{n-1}^2+c^m=a_{n-1}^2+4(1\pm a_{n-1})$. Thus  \\

If $c=\pm 2^k(1+i)$ divides 8, then $k\in\{1,2\}$ since by assumption $c\equiv 0 \mod 2$. The values of $c$ also satisfying $c\equiv 1+i\bmod 3$ are $c\in\{-2-2i,4+4i\}$. When $c= -2-2i$, we have that $c+1=-1-2i$ is non-square in $\Zi$. We find from \Cref{tab:2b} that the modulus $5+i$ ensures $a_n$ is non-square for $n\ge 2$. When $c=4+4i$, we find from \Cref{tab:2a} the modulus $3+i$. Thus $b_n^\pm(c)$ is non-square for all non-square $c\mid 8$ and $n\ge 2$. \\

%When $k=1$, $c=\pm(2+2i)$. We have already considered $c= 2+2i \equiv -1-i \bmod 3$ above. %$b_n^\pm(c)$ is non-square for all $n\ge 2$ in this case, since $a_2=c+1=3+2i$ is non-square.

When $c^m= 4(1-a_{n-1})$, we obtained $c\mid 2^{n-1}$. We now show that $b_n^\pm =\pm 2i$ and $c\equiv 1+i\mod 3$ leads to a contradiction. %Recall $c\equiv \pm (1+i)\bmod 3$ when $c\mid 2^{n-1}$ and $v_{1+i}(c)$ is odd.  
Now, we have 

\begin{equation}\label{eqcase}
   c^m=4(1-a_{n-1})\implies c^{2^{n-1}-1}=4(1-a_{n-2}^2-c^{2^{n-2}-1})
\end{equation}

Note we may assume $n>4$ since $a_3(c)$ is non-square by \cref{prop3.3} and since $a_4$ is non-square when $a_2$ is non-square, implying that $b_j^\pm$ is non-square in $\Zi$ for $j\in \{2,3,4\} \cup 2\Z^+$.\\

Note that $$\{c^k \bmod 3\}_{k\ge 1}\equiv \{(1+i)^k\bmod 3\}_{k\ge 1}\equiv \{1+i, -i, 1-i, 2, -1-i, i, -1+i, 1,...\}$$
so $1+i$ and thus $c$ has multiplicative order 8 mod 3. Since $8\mid 2^k$ when $k\ge 3$, we have $$c^{2^{n-1}-1} \equiv c^{-1}\equiv -1+i \mod 3.$$ Since $n>4$, we have $2^{n-1}-1\equiv 2^{n-2}-1 \equiv -1 \bmod 8$ and we obtain the following from \cref{eqcase}:
$$-1+i \equiv 1-a_{n-2}^2-( -1+i) \mod 3$$

We obtain $a_{n-2}^2\equiv -2i \bmod 3$, implying $a_{n-2}\equiv \pm (1-i)\bmod 3$. Since 
    $$\{a_n(c) \bmod 3\}_{n\ge1}\equiv \{1, -1+i, 1, i, 1+i, -1, i, 1+i, -1, ...\},$$
we must have $n-2=2$ in this case, so $n=4$. Since $n>4$ by assumption, this is a contradiction. 

%In \ref{itm:b}, we obtain $a_{n-2}^2\equiv 1 \bmod 3$, implying $a_{n-2}\equiv \pm 1 \bmod 3$. Again, from \cref{eqan}, we must have have $n-2\in \{1\}\cup\{3k\}_{n\ge 1}$ hence $n\in \{3\}\cup\{3k+2\}_{n\ge 1}$. However, $a_n$ is non-square in all such cases: we know from \cref{prop3.3} that $a_3(c)$ is non-square when $c\equiv 1+i \bmod 3$, and from \cref{eqan} that $a_{3k+2}\equiv 1+i\bmod 3$ is non-square mod 3. Again, $b_n^\pm \neq \pm 2i$. 

%Thus $b_n^\pm \neq 2i$; we've shown that this implies $b_n^\pm$ is non-square. %Since $a_2$ is also non-square by assumption, we've shown that $b_n^\pm$ is non-square for all $n\ge 3$.\\

%To prove \ref{itm:podd}, suppose $p\neq 1+i$ divides $c$ with $1+i\neq p$. Note that by \ref{itm:n0mod2} we may assume $2\mid c$, hence $1+i\mid b_n^\pm(c)$ since $c^m=b_n^+\cdot b_n^-$ for $m=2^{n-1}-1$. Since $b_n^-\equiv b_n^+ \bmod 2$, we must have either $b_n^\pm \equiv_2 0$ or $b_n^\pm \equiv 1+i \bmod 2$. If $b_n^\pm \equiv 1+i\bmod 2$, we're done, so suppose $b_n^\pm \equiv_2 0$.\\

%Now, if $b_n^\pm(c)= 2iq^2 \in \Zi^2$, then $\pm \sqrt{a_n}=2q^2-a_{n-1}$, similar to \ref{itm:odd}. Now $a_n=4q^4-4q^2a_{n-1} + a_{n-1}^2$, so $c^m=4q^4-4q^2a_{n-1}(c)=4q^2(q^2-a_{n-1})$. Thus 

Thus $b_n^\pm \neq 2i$; we've shown that this implies $b_n^\pm$ is non-square. Note that since $c$ is non-square, $f_r$ is irreducible. Since $\{b_n^+(c),b_n^-(c)\}$ contains no squares in $\Zi$ for each $n\ge 3$, it follows that  $f^n_{1/c}(x)$ is irreducible for all $n\ge 1$ by \cref{lem3.2}.
\end{proof}
\end{proof}
%end 3case
\begin{remark}
    One may apply \cref{thm3.6} to $c=-89$, a case with $a_3$ square in $\Zi$ by \cref{prop3.3}. Since $-89\equiv 3 \bmod4$ is a Gaussian prime, \cref{itm:odd} of this proposition applies (\cref{itm:1mod2} also applies since $2\nmid c$ and $16\nmid c+1=-88\notin \Zi^2$), hence we have $f^n_{-1/89}(x)$ is irreducible for all $n\ge 1$ by \cref{lem3.2}.
\end{remark}

\section{A useful factorization of $c$}
We first note the following extension of \cite[Lemma 4.1]{evstb} to the Gaussian integers (where $N:\mathbb{C}\to \R$ is the field norm $N_{\C/\R}(a+ib)=a^2+b^2$):  
\begin{lemma} \label[lemma]{lemma4.1} 
    Let $c\in \Zi$ with $|c|>1$, and $n\ge 2$ such that $a_n=a_n(c)$ is a square. Then we may write $c=uv$ with $\gcd(u,v)=1$, such that $$dv^{2^{n-1}-1}-\frac{u^{2^{n-1}-1}}{d}=2a_{n-1}(uv)$$ 
     where $d\in \{1,1+i, 2\}$ (if $n>2$, then $d\neq 1+i$). If in addition $\{b_n^+(c), b_n^-(c)\}$ contains a square, then $\{N(u),N(v)\}$ contains a square.
\end{lemma}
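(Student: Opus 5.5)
Write $m=2^{n-1}-1$ and $a_n=s^2$ with $s\in\Zi$. As in the proof of \cref{case3}, I will start from the factorization
$$c^m=a_n-a_{n-1}^2=(s-a_{n-1})(s+a_{n-1}) .$$
Put $A=s+a_{n-1}$ and $B=s-a_{n-1}$. Then $A-B=2a_{n-1}$ and $AB=c^m$. Also $\gcd(A,B)\mid \gcd(2s,2a_{n-1})$. Since $a_n\equiv a_{n-1}^2\bmod c$ and $\gcd(a_n,c)=1$, we get $\gcd(A,B)\mid 2$. So the only primes that can be shared between $A$ and $B$ are powers of $1+i$, and the analysis splits according to $v_{1+i}$.

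\textbf{Odd primes.} For a prime $\pi\neq 1+i$ dividing $c$, the full power $\pi^{m v_\pi(c)}$ lies in exactly one of $A$ and $B$. Let $v$ be the product of $\pi^{v_\pi(c)}$ over those $\pi$ dividing $B$ (up to a unit), and let $u$ be the product over those dividing $A$. This gives coprime $u,v$ with $c=uv$ away from $1+i$.

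\textbf{The prime $1+i$.} Set $e=v_{1+i}(c)$. If $e=0$, then $A$ and $B$ are coprime to $1+i$, and I take $d=1$. If $e>0$, then $s$ and $a_{n-1}$ are both odd, so $A\equiv B\bmod 2$ and $\min(v_{1+i}A,v_{1+i}B)\ge1$. Because $\gcd(A,B)\mid 2$, this minimum is $1$ or $2$. Assign all of $(1+i)^e$ to whichever of $u,v$ corresponds to the larger valuation. Then one of $A,B$ equals $d$ times a unit times a power of the corresponding factor, and the other equals that power divided by $d$, up to units. Here $d\in\{1+i,2\}$ is determined by the minimum valuation.

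\textbf{Why $d\neq 1+i$ when $n>2$.} For $n\ge3$, $a_{n-1}$ has a term of the form $c$-power plus an odd square. Working mod $4$ (as in \eqref{eq:anmod4}), $s$ and $a_{n-1}$ are then congruent mod $2$ but not mod $1+i$ to an odd degree mismatch. This forces $v_{1+i}(A\pm B)\ge2$ when $v_{1+i}(c^m)$ is large, and that excludes the valuation-$1$ split. I will verify this by writing $a_n\equiv a_{n-1}^2 \bmod (1+i)^{m e}$, with $me\ge3$.

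\textbf{Normalising units.} Units must be absorbed so that literally $B=d\,v^m$ and $A=u^m/d$. The equation $A-B=2a_{n-1}$ would then give the stated identity after a sign and a factor of $i$, since $b_n^\pm=i(a_{n-1}\pm s)$. Because $m$ is odd, every unit is an $m$-th power in $\Zi^\times$ ($i^m=\pm i$ generates). So units can be absorbed into $u$ and $v$ by replacing $u\mapsto \epsilon u$; the same applies to the sign choice of $s$. This is the main obstacle: juggling the units and the $\pm$ so that the stated identity holds exactly, including the odd-$m$ unit absorption and the $d$ placement. I expect it to need a small case check on $d$ and $\epsilon$.

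\textbf{Final claim.} Suppose $b_n^+$ or $b_n^-$ is a square. Up to units, $b_n^\pm$ equals $iA$ or $-iB$, that is, $d^{\pm1}u^m$ or $d v^m$ times a unit. Taking norms, $N(b_n^\pm)$ is a rational square times $N(d)^{\pm1}\in\{1,2,4\}$, multiplied by $N(u)^m$ or $N(v)^m$. With $d\ne1+i$ when $n>2$, $N(d)\in\{1,4\}$ is a square. Since $m$ is odd, $N(u)$ or $N(v)$ must then be a square in $\Z$. The case $n=2$ with $d=1+i$ gives $N(d)=2$; this requires a parity argument on $v_{1+i}$ directly, or it cannot occur because $b_2$ is only considered for $n\ge3$.
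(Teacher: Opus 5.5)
Your route is the paper's: write $a_n=s^2$, factor $c^m=(s+a_{n-1})(s-a_{n-1})$, note the gcd divides $2$, distribute the primes of $c$, and let $d$ be the gcd. However, the paragraph excluding $d=1+i$ for $n>2$ does not work as written. The claim that $s$ and $a_{n-1}$ are ``congruent mod $2$ but not mod $1+i$'' is incoherent, since congruence mod $2$ implies congruence mod $1+i$. Also, $v_{1+i}(A\pm B)\ge 2$ holds for every $n$, because $A-B=2a_{n-1}$ and $A+B=2s$. On its own it excludes nothing; it is compatible with $v_{1+i}(A)=v_{1+i}(B)=1$. The missing deduction is this: if $\min(v_{1+i}(A),v_{1+i}(B))=1$, then $v_{1+i}(A-B)\ge 2$ forces $v_{1+i}(A)=v_{1+i}(B)=1$. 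Hence $m\,v_{1+i}(c)=v_{1+i}(AB)=2$, and as $m$ is odd this gives $m=1$, i.e.\ $n=2$. This is exactly the paper's remark following the lemma. Your ``$me\ge 3$'' is the right count, but you never connect it to $v_{1+i}(A)=v_{1+i}(B)=1$.

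The same fact closes the loose end in your final claim. When $n=2$ and $d=1+i$, $v_{1+i}(b_2^{\pm})=1$ is odd, so neither $b_2^{\pm}$ is a square and the assertion holds vacuously. Your alternative (``$b_2$ is only considered for $n\ge 3$'') is not available, because the lemma is stated for $n\ge 2$.

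The unit normalisation you call the main obstacle is routine, but replacing only $u\mapsto\epsilon u$ changes $uv$. Since $AB=c^m=u^mv^m$, the two leftover units are inverse to each other. Write one as $\eta^m$ (possible because $m$ is odd) and use $(u,v)\mapsto(\eta u,\eta^{-1}v)$. The sign is then fixed by $(u,v)\mapsto(-u,-v)$, and no factor of $i$ enters.

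With these repairs your norm argument for the last assertion is correct: $N(d)\in\{1,4\}$ and $m$ is odd, so $N(u)^m$ or $N(v)^m$ being a square forces $N(u)$ or $N(v)$ to be a square. It supplies a detail the paper's sketch leaves implicit.
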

\begin{proof}
     This is the proof of \cite[Lemma 4.1]{evstb} with minor modifications. For a sketch of the proof: when $a_n(c)$ is a square, say $s^2$, then $c^m=(s+a_{n-1}(c))(s-a_{n-1}(c))$ for some $s\in \Zi$. Define $d:=\gcd(s+a_{n-1}(c),s-a_{n-1}(c))$, $t_+:=s+a_{n-1}(c)$, and $t_-:=s-a_{n-1}(c)$, so that $t_+t_-=c^m$ implies $c=uv$ with $t_+ =dv^m$ and $t_-=u^m/d$, or $t_+=u^m/d$ and $t_-=dv^m$.

\end{proof}

\begin{remark}
      Note that $d=1+i$ is impossible when $n>2$: we have $$s+a_{n-1}(c)\equiv s-a_{n-1}(c) \equiv 1+i \bmod 2.$$ In this case, we have $v_{1+i}(s\pm a_{n-1}(c))=1$. However, $$v_{1+i}(c^m)= m\cdot v_{1+i}(c) = v_{1+i}(s+a_{n-1}(c)) + v_{1+i} (s-a_{n-1}(c))=2,$$ so $m$ odd implies $m=2^{n-1}-1=1$, hence $n=2$, and $v_{1+i}(c)=2$. \\
 \end{remark}

We extend \cite[Definition 4.3]{evstb} over $\Zi$:
\begin{definition}\label[definition]{def4.3}
    Let $c\in \Zi$ with $|c|>1$. We set 
    
    $$Q(c):=\min\{N(v/u): u,v\in \Zi \text{ coprime with } |v|>|u| \text{ and } c=uv\}$$
    
\begin{center}
    $\tilde{Q}(c):=\min\{N(v/u): u,v\in \Zi \text{ coprime with } |v|>|u|, c=uv,$\\
    $\hspace{30mm} \text{ and } \{N(v), N(u)\} \text{ contains a square} \}$
\end{center}
    \center{and} 
     $$q(c):=\sqrt{Q(c)}, \quad \tilde{q}(c):=\sqrt{\tilde{Q}(c)}.$$
\end{definition}

If $Q(c)=N(v/u)$, then since $N(v)>N(u)$, $N(v)\ge N(u)+1$, thus $N(v/u)\ge 1+1/N(u)$. \\
Note $N(u)< N(\sqrt{c})$, since $N(u^2)<N(u)N(v)=N(c)$, so $$\tilde{Q}(c)\ge Q(c)=N\left(\frac{v}{u}\right)> 1+\frac{1}{N(\sqrt{c})}=1+\frac{1}{|c|} \hspace{5mm} \text{and} \hspace{5mm}\tilde{q}(c)\ge q(c) > \sqrt{1+\frac{1}{|c|}}.$$

We also make use of the following $\Zi$ extension of \cite[Definition 4.4]{evstb}:

%Definition 4.4
\begin{definition}\label{def4.4}
Let $c\in \Zi\backslash\{0,-1\}$ and $n\ge 2$. Define $\epsilon(n,c)$ so that $$\log\frac{\sqrt{a_n(c)}+a_{n-1}(c)}{\sqrt{a_n(c)}-a_{n-1}(c)} = \frac{\epsilon(n,c)}{\sqrt{c}}.$$
\end{definition}

 This definition prompts us to consider the ratio $\frac{a_{n-1}(c)}{\sqrt{a_n(c)}}$. We also consider the related ratio $\frac{a_n(c)}{c^{2^{n-1}-1}}$, which converges nicely when $c\in \R$, as shown by the following lemma. Assume $t\ge 4$ in $\R$, and let $F(t)=\frac{1}{2}(1-\sqrt{1-\frac{4}{t}})=\frac{2}{t(1+\sqrt{1-\frac{4}{t})}}$. Note $F(t)\le 1/2$, and that $F(t)$ decreases monotonically to 0. %From the second expression note that for large $t$, $F(t) \approx \frac{1}{t}$.\\

\begin{lemma}
\label[lemma]{lemma4.2} (Demark-Hindes-Jones-Misplon-Stoll-Stoneman, \cite[Lemma 4.2]{evstb})\\
    Let $t\ge 4$ in $\R$. Then the sequence $$\bar{a}_n(t)= \frac{a_n(t)}{t^{2^{n-1}-1}} \quad \text{for } n\ge 1$$
    satisfies $$1=\bar{a}_1(t)<\bar{a}_2(t)<... \hspace{5mm} \mathrm{ and } \hspace{5mm} \lim_{n\to \infty} \bar{a}_n(t) = tF(t).$$
\end{lemma}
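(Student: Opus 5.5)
The plan is to reduce the statement to a one-dimensional real dynamical system. For $t\ge 4$ the normalized sequence $\bar a_n(t)$ satisfies an autonomous recurrence, and the claims then follow from monotonicity of the iteration map below its smaller fixed point.

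\textbf{Step 1: the recurrence.} From $a_n = a_{n-1}^2 + t^{2^{n-1}-1}$, divide by $t^{2^{n-1}-1}$. Write $a_{n-1}^2 = \bar a_{n-1}^2\, t^{2(2^{n-2}-1)} = \bar a_{n-1}^2\, t^{2^{n-1}-2}$. This gives
$$\bar a_n = \frac{\bar a_{n-1}^2}{t} + 1 = g(\bar a_{n-1}), \qquad g(x) := 1 + \frac{x^2}{t},$$
with $\bar a_1 = 1$.

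\textbf{Step 2: the fixed points.} The equation $g(x) = x$ is $x^2 - tx + t = 0$. Its roots are real because $t \ge 4$:
$$x_\pm = \tfrac{t}{2}\bigl(1 \pm \sqrt{1 - 4/t}\bigr).$$
The smaller root is $L := x_- = tF(t) = \dfrac{2}{1 + \sqrt{1 - 4/t}}$. Since $0 \le \sqrt{1 - 4/t} < 1$, we have $1 < L \le 2$.

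\textbf{Step 3: monotone convergence.} On $[0, \infty)$ the map $g$ is strictly increasing, and $g(x) > x$ for $0 \le x < L$ (the quadratic $x^2 - tx + t$ is positive there). I will prove $\bar a_{n-1} < \bar a_n < L$ by induction.
\begin{itemize}
\item Base case: $\bar a_1 = 1 < L$ by Step 2, and $\bar a_2 = g(1) = 1 + 1/t > 1$. Also $\bar a_2 = g(1) < g(L) = L$.
\item Inductive step: if $\bar a_{n-1} < \bar a_n < L$, apply the increasing map $g$ to obtain $\bar a_n < \bar a_{n+1} < g(L) = L$.
\end{itemize}
So the sequence is strictly increasing and bounded above by $L$, hence convergent to some $\ell$ with $1 < \ell \le L$. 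By continuity of $g$, $\ell$ is a fixed point of $g$. The only fixed point in $(1, L]$ is $L$, because $x_+ \ge L$ with equality only when $t = 4$, and then $x_+ = L$ anyway. Therefore $\lim_{n\to\infty} \bar a_n = tF(t)$.

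The one place needing care is the endpoint $t = 4$, where the two fixed points coincide at $L = 2$. The strict inequality $\bar a_n < L$ still holds there, since it comes from the strict monotonicity of $g$ applied to $\bar a_{n-1} < L$, and the argument goes through unchanged. Apart from this, the only step that takes any work is the recurrence computation in Step 1.
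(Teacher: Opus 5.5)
Your proof is correct and takes essentially the same approach as the paper. The paper only cites \cite[Lemma 4.2]{evstb}, but it later invokes your recurrence $\bar a_n=\bar a_{n-1}^2/t+1$ ``as in the proof'' of this lemma, and your monotone iteration of the increasing map $x\mapsto 1+x^2/t$ from $1$ up to its smaller fixed point $tF(t)$ is that argument (equivalently, $\bar a_n(t)=t\,f_{1/t}^n(0)$ increases to the fixed point $F(t)$ of $x^2+1/t$), including a correct treatment of the endpoint $t=4$.
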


Suppose $|g(z)|<1$. As in \cite{320169}, the ML inequality (or using Maclaurin series)  yields  
\begin{equation}\label{ineq}
    \left|\log\frac{1+g(z)}{1-g(z)}\right| = \left|2\int_0^{g(z)} \frac{d\gamma}{1-\gamma^2}\right| \le 2\int_0^{|g(z)|} \frac{dt}{1-t^2}= \log \frac{1+|g(z)|}{1-|g(z)|}.
\end{equation}

We now seek some condition on $g(z):= \frac{a_{n-1}}{\sqrt{a_n}}(z)$ to guarantee $|\frac{a_{n-1}}{\sqrt{a_n}}(c)|<1$:
\begin{theorem}\label[theorem]{thmzineq1}
    Let $n\ge 2$. If $|c|\ge 5$, then $|\frac{a_{n-1}}{\sqrt{a_n}}(c)|< 1$.
\end{theorem}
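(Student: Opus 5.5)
The plan is to normalize the sequence as in \cref{lemma4.2}, but over $\C$ instead of $\R$. Set $m_n=2^{n-1}-1$ and $\bar{a}_n(c)=a_n(c)/c^{m_n}$. Since $2m_{n-1}=m_n-1$, the recurrence $a_n=a_{n-1}^2+c^{m_n}$ becomes
$$\bar{a}_1=1,\qquad \bar{a}_n=1+\frac{\bar{a}_{n-1}^2}{c}\quad (n\ge 2).$$
Only the modulus of $\sqrt{a_n}$ enters the statement, so the choice of square root is irrelevant. We have
$$\left|\frac{a_{n-1}}{\sqrt{a_n}}(c)\right|^2=\left|\frac{a_{n-1}^2}{a_n}\right| =\left|\frac{\bar{a}_{n-1}^2/c}{\bar{a}_n}\right|=\left|\frac{\bar{a}_n-1}{\bar{a}_n}\right|=\left|1-\frac{1}{\bar{a}_n}\right|.$$
So the claim is equivalent to $|\bar{a}_n-1|<|\bar{a}_n|$. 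Geometrically, $\bar{a}_n$ must lie strictly on the same side as $1$ of the perpendicular bisector of $0$ and $1$, that is, $\Real(\bar{a}_n)>1/2$.

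To get this, I would prove by induction on $n\ge 1$ that $|\bar{a}_n-1|\le 1/2$, and in fact $|\bar{a}_n-1|\le 9/20$ for $n\ge 2$.
\begin{itemize}
\item The base case $\bar{a}_1=1$ is trivial.
\item For the inductive step, assume $|\bar{a}_{n-1}-1|\le 1/2$. Then $|\bar{a}_{n-1}|\le 3/2$, and with $|c|\ge 5$,
$$|\bar{a}_n-1|=\frac{|\bar{a}_{n-1}|^2}{|c|}\le \frac{9/4}{5}=\frac{9}{20}<\frac12.$$
\end{itemize}
Hence for every $n\ge 2$ we get $\Real(\bar{a}_n)\ge 1-9/20=11/20>1/2$. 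In particular $\bar{a}_n\neq 0$, so $a_n\ne 0$ and the ratio is well defined. We also get $|\bar{a}_n-1|<|\bar{a}_n|$, which gives the strict inequality of the theorem.

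There is no real obstacle once the normalization is chosen. The only point needing care is choosing the invariant disk radius $\rho$ so that $(1+\rho)^2/|c|\le\rho$ holds for all $|c|\ge 5$ while still leaving $\Real(\bar{a}_n)>1/2$. The choice $\rho=1/2$ works with room to spare: $9/20<1/2$. The threshold $5$ is close to optimal for this argument, since the inequality $(1+\rho)^2\le \rho|c|$ has a solution $\rho$ only when $|c|\ge 4$, mirroring the hypothesis $t\ge 4$ in \cref{lemma4.2}.

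If a quantitative version is needed later, for example for the function $\Xi$ in \cref{THM1.4}, the same induction gives $|\bar{a}_n-1|\le 9/(4|c|)$. Consequently $|a_{n-1}/\sqrt{a_n}|^2\le \frac{9/(4|c|)}{1-9/(4|c|)}$, which is $O(1/|c|)$. This bound can then be fed into \eqref{ineq} with $g=a_{n-1}/\sqrt{a_n}$ to control $\epsilon(n,c)$ from \cref{def4.4}.
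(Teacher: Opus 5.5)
Your proof is correct, and it takes a different route from the paper's. Both arguments rest on the same inequality $|\bar a_n(c)-1|<1/2$. The paper's contrapositive turns $|a_{n-1}/\sqrt{a_n}|\ge 1$ into $|c|^m\le 2|a_{n-1}(c)|^2$, which is $|\bar a_n(c)-1|\ge 1/2$.

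The paper proves the inequality by coefficientwise domination $|a_{n-1}(c)|\le a_{n-1}(|c|)$, which reduces everything to the real sequence $\bar a_n(|c|)$. It then imports \cref{lemma4.2} ($\bar a_n(t)$ increases to $tF(t)$), checks by differentiation that $tF(t)$ is decreasing, and uses the numerical value $5F(5)\approx 1.38<3/2$.

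You instead iterate $\bar a_n=1+\bar a_{n-1}^2/c$ directly in $\C$ and show that the disk $|z-1|\le 1/2$ is invariant. This buys you several things:
\begin{itemize}
\item it is self-contained, with no appeal to the real-variable lemma or to calculus on $F$;
\item it makes $a_n\neq 0$ explicit, which the paper leaves implicit;
\item it gives $|\bar a_n|\ge 11/20$ at no extra cost.
\end{itemize}

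What the paper's route provides is the sharp constant $|c|F(|c|)$, which it reuses to define $\Xi$. Your method recovers this too if you take the optimal radius $\rho=|c|F(|c|)-1$, the smallest fixed point of $\rho\mapsto(1+\rho)^2/|c|$. The accompanying bound $|\bar a_n|\ge 2-|c|F(|c|)$ would even improve the paper's cruder estimate $|1/\bar a_n|<2$ in its subsequent bound on $\epsilon(n,c)$.

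One small correction to your closing remark. Invariance alone needs only $|c|\ge 4$, but to conclude $\Real(\bar a_n)>1/2$ you also need $\rho<1/2$, and that forces $|c|>9/2$. Since $tF(t)=3/2$ exactly at $t=9/2$, this is the same threshold as the paper's method. On $\Zi$ it is equivalent to $|c|\ge 5$, because no Gaussian integer has norm strictly between $81/4$ and $25$.
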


\begin{proof}
   Let $m=2^{n-1}-1$. We consider the contrapositive. When $|\frac{a_{n-1}}{\sqrt{a_n}}(c)|\ge 1$, we have $|a_n(c)/a_{n-1}^2(c)|\le1$, thus $|a_n(c)|\le|a_{n-1}^2(c)|$. By the reverse triangle inequality and the relationship $a_n(c)=c^m+a_{n-1}^2(c)$, $$|c|^m-|a_{n-1}^2(c)|\le |a_{n-1}^2(c)+c^m|=|a_n(c)|\le |a_{n-1}^2(c)|$$ implies $|c|^m\le 2|a_{n-1}^2(c)|$. Therefore $|c|^m\le 2a_{n-1}^2(|c|)$ since $a_{n-1}(c)$ has non-negative coefficients, so $$1/2 \le \frac{a_{n-1}^2(|c|)}{|c|^m}=\frac{a_n(|c|)}{|c|^m} -1$$ 

since $a_n(|c|)= a_{n-1}^2(|c|)+|c|^m$. Thus $$3/2 \le \frac{a_n(|c|)}{|c|^m} =\bar{a}_n(|c|).$$

Since  $5\cdot F(5)\approx 1.38 <3/2$, we have that all $|c|=t \in[ 5,\infty)$ have $\bar{a}_n(t)< tF(t)< 3/2$ since $tF(t)$ decreases on $[4,\infty)$: $$\odv{}{t}(tF(t))= F(t)-t\cdot\left(\frac{1}{t^2\sqrt{1-4/t}}\right)=F(t)-\frac{1}{t\sqrt{1-4/t}}.$$\\
Since $F(t)= \frac{2}{t(1+\sqrt{1-\frac{4}{t})}}= \frac{2}{t+t\sqrt{1-4/t}}< \frac{2}{2t\sqrt{1-4/t}}=\frac{1}{t\sqrt{1-4/t}}$ for $t\in (4,\infty)$, we have $\odv{}{t}(tF(t))<0$ on $(4, \infty)$. Thus $tF(t)$ decreases to 1 on this interval.\\

Our hypothesis $|\frac{a_{n-1}}{\sqrt{a_n}}(c)|\ge 1$ is thus false on $[ 5,\infty)$. Thus if $|c|\ge 5$, then $|\frac{a_{n-1}}{\sqrt{a_n}}(c)|< 1$.
\end{proof}

\begin{remark}
    Note that $c\in \Zi$ with $|c|<5$ implies $|c|\le 2\sqrt{5}$, while $$2\sqrt{5}\cdot F(2\sqrt{5})\approx 1.51 >3/2,$$ so $5$ is the minimum modulus (for Gaussian integers) for which we can guarantee $|\frac{a_{n-1}}{\sqrt{a_n}}(c)|<1$ with this method.
\end{remark}

When $|c|\ge 5$, then $|\frac{a_{n-1}}{\sqrt{a_n}}(c)|<1$, hence$$
    |\epsilon(n,c)|=\left|\sqrt{c}\log\frac{\sqrt{a_n(c)}+a_{n-1}(c)}{\sqrt{a_n(c)}-a_{n-1}(c)}\right|= \left|\sqrt{c}\log\frac{1+\frac{a_{n-1}}{\sqrt{a_n}}(c)}{1-\frac{a_{n-1}}{\sqrt{a_n}}(c)}\right|\le\sqrt{|c|}\log\frac{1+|\frac{a_{n-1}}{\sqrt{a_n}}(c)|}{1-|\frac{a_{n-1}}{\sqrt{a_n}}(c)|}$$
% From the remarks following \cite[Lemma 4.2]{evstb} we know $\epsilon(n,|c|)\le \epsilon(|c|),$ so $$|\epsilon(n,c)| \le \epsilon(n,|c|) \le \epsilon(|c|)$$ for all $|c|> 5$.\\

Let $m=2^{n-1}-1$. Using the relation $a_n(c)-c^m=a_{n-1}^2(c)$, it follows that  $\frac{a_{n-1}}{\sqrt{a_n}}(c)=\sqrt{a_{n-1}^2/a_n}=\sqrt{1-c^m/a_n}$. Since $\bar{a}_n (c) = \frac{a_n(c)}{c^m}$ and $\bar{a}^2_{n-1}/c +1 =\bar{a}_n$ (as in the proof of \cref{lemma4.2}), we have that $$\left|\frac{a_{n-1}}{\sqrt{a_n}}(c)\right|= \left|\sqrt{1-\frac{1}{\bar{a}_n(c)}}\right|= \left|\sqrt{\frac{\bar{a}_n (c) -1 }{\bar{a}_n(c)}}\right|=\left|\sqrt{\frac{\bar{a}_{n-1}^2}{c\bar{a}_n}}\right|=\left|\frac{\bar{a}_{n-1}}{\sqrt{c\bar{a}_n}}\right|.$$

%When $|c|\ge5$, we have $|a_{n-1}(c)/\sqrt{a_n(c)}|<1$, so in particular $|\sqrt{(\bar{a}_n (c) -1 )/\bar{a}_n(c)}|<1$ and hence $$|\bar{a}_n (c) -1 |<|\bar{a}_n(c)|.$$\\

By the triangle inequality and \cref{lemma4.2}, $$|\bar{a}_n(c)| =\left| \frac{a_n(c)}{c^m}\right|\le \frac{a_n(|c|)}{|c|^m} =\bar{a}_n(|c|)\le |c|F(|c|).$$

Also, $|c|\ge 5$ implies the inequality $|a_{n-1}(c)/\sqrt{a_n(c)}|<1 \implies |a^2_{n-1}(c)|<|a_n(c)|$ by \cref{thmzineq1}. By the reverse triangle inequality, $|c^m| -|a_n(c)| \le |a_n(c)-c^m| =|a^2_{n-1}(c)|<|a_n(c)|$ implies $|c^m|< 2|a_n(c)|$, so $$\left|\frac{c^m}{a_n(c)}\right|=\left|\frac{1}{\bar{a}_n(c)}\right|<2 \implies \left|\frac{1}{\sqrt{\bar{a}_n}}\right|<\sqrt{2}.$$\\

By the previous inequalities, %$\frac{1}{\sqrt{2|c|}} <\left|\frac{\bar{a}_{n-1}}{\sqrt{c\bar{a}_n}}\right|$
$$\left|\frac{\bar{a}_{n-1}}{\sqrt{c\bar{a}_n}}\right| < \frac{|c|F(|c|)}{\sqrt{|c|}}\sqrt{2}=F(|c|)\sqrt{2|c|}.$$

Note $F(|c|)\sqrt{2|c|} = \frac{2\sqrt{2}}{\sqrt{|c|}(1+\sqrt{1-4/|c|})}$ decreases as a function of $|c|$ when $|c|\ge 4$. Since $|c|\ge 5$, we have $$F(|c|)\sqrt{2|c|}\le F(5)\sqrt{10}\approx 0.87.$$

%($F(x)\sqrt{2x}$ has derivative $-\frac{\sqrt{2}}{\sqrt{x}(x-4+\sqrt{(x-4)x})}$)

Since $\log \frac{1+x}{1-x}$ is an increasing function of $x$ on the interval $(-1,1)$, $$\sqrt{|c|}\log\frac{1+|a_{n-1}(c)/\sqrt{a_n(c)}|}{1-|a_{n-1}(c)/\sqrt{a_n(c)}|}= \sqrt{|c|}\log\frac{1+|\bar{a}_{n-1}|/|\sqrt{c\bar{a}_n}|}{1-|\bar{a}_{n-1}|/|\sqrt{c\bar{a}_n}|}< \sqrt{|c|}\log \frac{ 1+ F(|c|)\sqrt{2|c|}}{1-F(|c|)\sqrt{2|c|}}.$$

This bound leads to the following definition.
\begin{definition}\label[definition]{def4.5}
    Let $|c|\ge 5$, with $c\in \Zi$ and $n\ge 2$. Define $\Xi(|c|)$ as $$\Xi(|c|)= \sqrt{|c|}\log \frac{ 1+ F(|c|)\sqrt{2|c|}}{1-F(|c|)\sqrt{2|c|}}.$$
\end{definition}
We have shown that $|\epsilon(n,c)|< \Xi(|c|)$. It follows from the definition of $F(c)$ (and L'Hopital's rule) that $\Xi(|c|)$ decreases to $\sqrt{2}$ as $|c|\to \infty$. We also have that $\Xi(|c|)\le \Xi(5)\approx 6.05$.
\section{The proof of Theorem \ref{THM1.4}}
We now have the tools in place to prove the $\Zi$ analogue of \cite[Proposition 4.5]{evstb}: 
%Proposition 4.5
\begin{proposition}\label[proposition]{prop4.5}
     Suppose $|c|\ge  5$ and $n\ge5 $. If $c\in \Zi\backslash \Z^-$ and $$n\ge 1 + \log_2\left(1+\frac{|\epsilon(n,c)/\sqrt{c}|}{\log{q(c)}}+\frac{\log{4}}{\log{q(c)}}\right)$$
     then $a_n(c)$ is not a square.\\

If $c\in \Zi\backslash\Z^-$, and the weaker condition 
    $$n\ge 1 + \log_2\left(1+\frac{|\epsilon(n,c)/\sqrt{c}|}{\log{\tilde{q}(c)}}+\frac{\log{4}}{\log{\tilde{q}(c)}}\right)$$
holds, then $b_n^\pm(c)$ is non-square.
\end{proposition}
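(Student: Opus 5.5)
We argue by contraposition, following the proof of \cite[Proposition 4.5]{evstb}: assume $a_n(c)$ is a square and derive an upper bound on $n$ in terms of $q(c)$ that contradicts the hypothesis. Write $m=2^{n-1}-1$ and $a_n=s^2$. By \cref{lemma4.1}, $c=uv$ with $\gcd(u,v)=1$, and (after possibly swapping the roles of $u,v$ and choosing the sign of $s$)
$$s+a_{n-1}=d\,v^m,\qquad s-a_{n-1}=u^m/d,\qquad d\in\{1,2\},$$
since $d\neq 1+i$ for $n>2$. Dividing the two identities gives
$$\frac{\sqrt{a_n}+a_{n-1}}{\sqrt{a_n}-a_{n-1}}=d^2\left(\frac{v}{u}\right)^m .$$
We cannot assume $|v|>|u|$ here. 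If $|v|<|u|$, we swap labels, which replaces the left side by its reciprocal and changes the sign of the logarithm. Either way, \cref{def4.4} gives
$$\pm\frac{\epsilon(n,c)}{\sqrt c}\equiv \log(d^{\pm 2}) + m\log\frac{v}{u}\pmod{2\pi i},$$
where $|v|>|u|$ now holds, since $|u|=|v|$ with $u,v$ coprime and $|c|\ge5$ is impossible.

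Next take real parts, i.e. absolute values, to drop the branch ambiguity. This gives
$$m\log\left|\frac vu\right| \le \left|\frac{\epsilon(n,c)}{\sqrt c}\right| + \log 4 .$$
We have $\log|v/u|\ge\log q(c)$ by \cref{def4.3}, since $q(c)=\sqrt{Q(c)}=\min|v/u|$. Hence
$$2^{n-1}-1\le \frac{|\epsilon(n,c)/\sqrt c|+\log 4}{\log q(c)},\qquad\text{i.e.}\qquad n\le 1+\log_2\left(1+\frac{|\epsilon(n,c)/\sqrt c|}{\log q(c)}+\frac{\log 4}{\log q(c)}\right).$$
To make the contradiction clean, the hypothesis inequality should effectively be strict. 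The non-strict case requires that $|d^2(v/u)^m|$ equal the modulus of the left side exactly, with $q$ attained and $d=2$. I would try to rule this out by parity: $d=2$ forces $s\pm a_{n-1}$ to be even, so $|c|$ is even. Failing that, I would note that the real part of the logarithm is strictly smaller than its modulus unless the argument is $0$.

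For $b_n^\pm$, the same argument applies after assuming some $b_n^\pm$ is a square. Since $b_n^\pm$ squares imply $a_n$ is a square in that setting, the last clause of \cref{lemma4.1} shows $\{N(u),N(v)\}$ contains a square. The minimum can then be taken over the restricted set, giving $\log|v/u|\ge\log\tilde q(c)$, and the same chain of inequalities yields the second statement.

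The main obstacle is handling the complex logarithm. First, $\sqrt c$ and $\sqrt{a_n}$ need consistent branches. Second, we must check that \cref{def4.4} combined with \cref{thmzineq1} gives the real-part identity; this is where $|c|\ge5$ is used, since it ensures $|a_{n-1}/\sqrt{a_n}|<1$ and keeps the principal logarithm well-defined. Third, the hypothesis $c\notin\Z^-$ is needed so that $\sqrt c$ is not purely imaginary in a way that breaks the $\epsilon$ definition. I would spend most effort verifying that taking $|\cdot|$ of the $\epsilon$ term loses nothing essential.
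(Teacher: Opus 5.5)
Your inequality chain after the factorization is the same as the paper's: you divide the two identities from \cref{lemma4.1}, take real parts of the logarithm, bound $|\log|d^{\pm 2}||\le\log 4$, and switch to $\tilde q(c)$ via the last clause of \cref{lemma4.1}. The gap is the sentence that disposes of $|u|=|v|$. It is false that coprime $u,v\in\Zi$ with $|u|=|v|$ force $|c|<5$: take $u=2+i$, $v=2-i$ (so $c=5$), or $u=3+2i$, $v=3-2i$ (so $c=13$). That reasoning only works over $\Z$, where $|u|=|v|$ means $u=\pm v$; this is exactly the point of the remark following the paper's proof. If $|u|=|v|$, the pair is not admissible in \cref{def4.3}, so you get no inequality $|v/u|\ge q(c)$ and the bound collapses. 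Proving $|u|\neq|v|$ is the real new content of the proof. It has to use the specific shape $s\pm a_{n-1}$ of the factors, not just coprimality.

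The paper proves $|u|\neq|v|$ by cases on $d$.

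For $d=1$, $|u^m|=|v^m|$ means $|s+a_{n-1}|=|s-a_{n-1}|$, i.e.\ $s\perp a_{n-1}$ as vectors in $\R^2$, so $s=\lambda i a_{n-1}$ with $\lambda\in\Q$. Since $\gcd(s,a_{n-1})=1$, $a_{n-1}$ is a unit times a rational integer and $s$ is $i$ times that unit times a rational integer. Hence $a_n,a_{n-1}^2\in\Z$, so $c^m\in\Z$, and since $m$ is odd this forces $c\in\Z$. Then $a_n=s^2$ is a non-positive integer, which forces $c\in\Z^-$, because $a_n>0$ for $c\in\Z^+$. This is where the hypothesis $c\notin\Z^-$ is actually used, not to keep $\sqrt c$ from being purely imaginary in \cref{def4.4}. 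For $c\in\Z^-$ one has $\sqrt{a_n}\in i\Z$ and $a_{n-1}\in\Z$, so $|u|=|v|$ really does occur when $d=1$; that is why negative integers are handled separately in \cref{thm:negative}.

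For $d=2$, $|u|=|v|$ gives $N(s+a_{n-1})=16\,N(s-a_{n-1})$. Since the gcd of the two factors is $2$, their $(1+i)$-adic valuations must be $6$ and $2$. So $m\,v_{1+i}(c)=8$, which is impossible for odd $m>1$.

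Your concern about strict versus non-strict inequality is a minor boundary issue that the paper's write-up also passes over; it is not the essential gap.
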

\begin{proof}
    In the following, we write $a_n$ for $a_n(c)$ since $c$ is fixed, and assume that $a_n$ is a square. Let $m=2^{n-1}-1.$ Since $n\ge2$, by \cref{lemma4.1} and the following remark we may write $c=uv$ with coprime $u,v$ such that $$(v^m,u^m)= ( \frac{1}{d}(\pm \sqrt{a_n} + a_{n-1}), d(\pm \sqrt{a_n} - a_{n-1})) \quad \text{ for } d \in\{1,2\}.$$

    %$$(v^m,u^m)=(\frac{1}{1+i}(\pm\sqrt{a_n}+a_{n-1}),(1+i)(\pm\sqrt{a_n}-a_{n-1})) \text{ if } d=1+i$$

We first show $|u|\neq|v|$ in all cases. Note $|u|\neq|v|\iff |u^m|\neq |v^m|$. Let $\sqrt{a_n}=A+Bi$, $a_{n-1}=C+Di$.\\

When $d=1$, $N(u^m)=N(v^m)$ implies
\begin{equation*}
\begin{split}
N(\pm \sqrt{a_n}-a_{n-1})=N(\pm\sqrt{a_n} +a_{n+1})& \iff N(A+Bi-(C+Di))=N(A+Bi+(C+Di))\\
&\iff(A-C)^2+(B-D)^2 =(A+C)^2+(B+D)^2\\
&\iff -2AC-2BD=2AC+2BD\\
&\iff 0=4(AC+BD),
\end{split}
\end{equation*}
so the vectors $\overrightarrow{\sqrt{a_n}}=\begin{bmatrix}
    A \\
    B
\end{bmatrix}$ and $\vec{a}_{n-1}=\begin{bmatrix}
    C \\
    D
\end{bmatrix}$ are perpendicular.

Then, we must have $\overrightarrow{\sqrt{a_n}}=\lambda\begin{bmatrix}
    -D \\
    C
\end{bmatrix}$ for some real $\lambda$, i.e. $$\sqrt{a_n}= \lambda(-D+iC)=\lambda i(C+Di)=\lambda ia_{n-1}.$$

Then $A=-\lambda D$, $B=\lambda C$. Since $a_n$ is a square, $A, B\in \Z,$ so $\lambda$ is a rational number $s/t\in \Q$. To have $\sqrt{a_n}\in \Zi$ we must have $t\mid a_{n-1}$. Let $\beta=a_{n-1}/t \in \Zi$, so $\beta\mid a_{n-1}$. Then $$\sqrt{a_n}=(s/t)i a_{n-1}=si\beta.$$ However, $a_n$ is relatively prime to $a_{n-1}$ as shown in the proof of \cref{lemma4.1}. Since $\beta\mid \gcd(\sqrt{a_n}, a_{n-1})=1$, we have $\beta\in \{\pm 1, \pm i\}$. Thus $\sqrt{a_n}$ is either real or pure imaginary. If $\beta=\pm1 =a_{n-1}/t$, then $a_{n-1}\in \Z$, and thus $\sqrt{a_n}=\lambda ia_{n-1}\in i\Z$. The reverse is true for $\beta=\pm i$. In either case, $a_n\in \Z$.

Since $a_n=a^2_{n-1} +c^m$ and $a_n, a^2_{n-1}\in \Z$, then $c^m\in \Z$. Since $m=2^{n-1}-1$ is odd, we must have $c\in \Z$ (as in \cite{Michels2021}). This contradicts $a_{n-1}$ being pure imaginary, since $a_k(c)=a_{k-1}^2+c^{2^{k-1}-1}, a_1(c)=1$, and thus $a_k\in \Z$ whenever $c\in \Z$. Thus $\sqrt{a_n}$ is pure imaginary and $c\in \Z$ with $\beta=\pm 1$. The only way $\sqrt{a_n}$ is pure imaginary and $c\in \Z$ is when $a_n(c)$ is a negative integer. Then $c$ is a negative integer, since $a_n(c)>0$ when $c\in \Z^+$. In this case $A=D=0$, with $$N(\sqrt{a_n}-a_{n-1})=N(\sqrt{a_n}+a_{n-1}) \iff N(iB-C)=N(iB+C).$$ Since $c\in \Zi$ is not a negative integer by hypothesis, we must have $N(u)\neq N(v)$.\\

When $d=2$, $N(u^m)=N(v^m)$ implies $|\frac{\sqrt{a_n}+a_{n-1}}{\sqrt{a_n}-a_{n-1}}|=4$. Then $N(\frac{\sqrt{a_n}+a_{n-1}}{\sqrt{a_n}-a_{n-1}})=16$:
\begin{equation}\label{d2}
   N(\sqrt{a_n}+a_{n-1})=16\cdot N(\sqrt{a_n}-a_{n-1}).
\end{equation} 
Since $d=\gcd(\sqrt{a_n}+a_{n-1}, \sqrt{a_n}-a_{n-1}) = 2$, we must have $v_{1+i} (\sqrt{a_n}-a_{n-1}) = 2$, hence in $\Z$, $$v_2(N(\sqrt{a_n}-a_{n-1}))=2.$$ Now $v_2(N(\sqrt{a_n}+a_{n-1}))=6$ in $\Z$ by \cref{d2}. Since  $(\sqrt{a_n}+a_{n-1}) (\sqrt{a_n}-a_{n-1})=c^m$, $$8=v_2(N(\sqrt{a_n}+a_{n-1})N(\sqrt{a_n}-a_{n-1}))= v_2(N(c^m))= m\cdot v_{1+i}(c),$$ so $m\mid 8$. This is a contradiction; since $n>2$, $m=2^{n-1}-1$ is an odd number greater than 1. Thus $|v|\neq|u|$ in all cases.\\

Now, since $|v|\neq|u|$, \begin{equation*}
\begin{split}
m\log q(c) \le m|\log(|v/u|)| = |\log(|(v/u)^m|)|&=\left|\log\left|d^{\pm2}\frac{\sqrt{a_n}+a_{n-1}}{\sqrt{a_n}-a_{n-1}}\right|\right|\\
&= \left|\log|d^{\pm2}|+\log\left|\frac{\sqrt{a_n}+a_{n-1}}{\sqrt{a_n}-a_{n-1}}\right|\right| \\
% &\le \left|\log|d^{\pm2}|\right|+\left|\log\left|\frac{\sqrt{a_n}+a_{n-1}}{\sqrt{a_n}-a_{n-1}}\right|\right|\\
    & \le \log{4} + \left|\log \frac{\sqrt{a_n}+a_{n-1}}{\sqrt{a_n}-a_{n-1}}\right|\\
    &= \log{4} + |\epsilon(n,c)/\sqrt{c}|.
\end{split}
\end{equation*}

This is equivalent to the inequalities shown, since $m=2^{n-1}-1$.\\

Assuming that $b_n^\pm(c)$ is square in $\Zi$ yields $N(u)$ or $N(v)$ is square by \cref{lemma4.1}, hence the bound is valid for $\tilde{q}(c)$ in place of $q(c)$.\\
\end{proof} 

\begin{remark} Note that \cite[Proposition 4.5]{evstb} assumed $c$ is a non-square positive integer $c=uv$, so $u\neq \pm v$ and thus $|u|\neq|v|$ is automatic.
\end{remark}
We now handle the negative integer case. We'll need \cite[Proposition 3.1]{evstb} as a lemma:

\begin{lemma} (Demark-Hindes-Jones-Misplon-Stoll-Stoneman,\cite[Proposition 3.1]{evstb})\label[lemma]{lem:negative}
    If $c\in \Z^-$, then $a_n\in \Z^-$ for all $n\ge 2$.
\end{lemma}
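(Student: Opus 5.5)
We argue by induction on $n$, following the structure of the recurrence $a_1=1$, $a_n(c)=a_{n-1}^2(c)+c^{2^{n-1}-1}$. Throughout, $c=-k$ with $k\in\Z^+$.

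Since $c\in\Z$, every $a_n(c)$ is a rational integer. Write $m=2^{n-1}-1$; it is odd for $n\ge 2$, so $c^m=-k^m<0$.

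\textbf{Base case.} $a_2=1+c=1-k$. This is negative when $k\ge 2$. For $k=1$ we have $c=-1$ and $a_2=0$, which is not in $\Z^-$. So we must assume $c\neq -1$ (i.e. $k\ge 2$), as the paper excludes throughout. Then $a_2\le -1$.

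\textbf{Inductive strengthening.} The bare statement $a_{n-1}<0$ is not enough, because the square $a_{n-1}^2$ could outweigh $c^m$. I would instead prove the stronger claim
$$-k^{2^{n-2}} < a_n < 0 \quad\text{for } n\ge 2,$$
equivalently $|a_n| < k^{2^{n-2}}$.

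For $n=2$: $|a_2|=k-1<k=k^{1}$, which holds.

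\textbf{Inductive step.} Suppose $|a_{n-1}|<k^{2^{n-3}}$.

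Upper bound: since $a_{n-1}$ is a nonzero integer,
$$a_n = a_{n-1}^2 - k^{2^{n-1}-1} < k^{2^{n-2}} - k^{2^{n-1}-1}\le 0,$$
because $2^{n-2}\le 2^{n-1}-1$ for $n\ge 2$ and $k\ge 2$. Hence $a_n<0$.

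Lower bound: $a_n \ge 1-k^{2^{n-1}-1}$. This gives $|a_n|<k^{2^{n-1}-1}$, which does not directly yield the claimed exponent $2^{n-2}$. The bound was mis-indexed; the natural comparison exponent for $|a_n|$ is $m=2^{n-1}-1$. So I would restate the claim as
$$-k^{2^{n-1}-1}<a_n<0 .$$
Then the induction step reads
$$a_n < k^{2(2^{n-2}-1)} - k^{2^{n-1}-1} = k^{2^{n-1}-2}-k^{2^{n-1}-1}<0,$$
and
$$a_n > -k^{2^{n-1}-1},$$
using $a_{n-1}^2\ge 1$. Both bounds close the induction.

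The main obstacle was choosing the right inductive bound. Once the comparison exponent is fixed at $2^{n-1}-1$, everything is elementary. This is exactly the argument of \cite[Proposition 3.1]{evstb}, which applies verbatim since $c$ is a rational integer.
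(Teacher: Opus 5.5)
Your final induction, $-k^{2^{n-1}-1}<a_n<0$ for $k=-c\ge 2$, is correct. It is essentially the argument of \cite[Proposition 3.1]{evstb} that the paper relies on, written in integer form: after dividing by $c^{2^{n-1}}=k^{2^{n-1}}$ it says $r<f_r^n(0)<0$. So you are using forward-invariance under $f_r$ of the interval $(r,0)\subset(-\sqrt{-r},0)$, then clearing the positive even-power denominator.

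You are also right that $c=-1$ must be excluded: there $a_2=0$, and the invariant-interval argument likewise needs $|r|<1$ so that $f_r(0)=r$ lies in the interval. This is harmless, since the lemma is only applied to $c\in\Z^-\setminus\Zi^2$.
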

% \begin{proof}
%     Let $r=1/c$ and $f_r(x)=x^2+r$, and consider the image of the interval $I=(-\sqrt{-r},0)$ under $f_r:\R\to \R$. We have $f_r(-\sqrt{-r})=0$ and $f_r(0)=r\in I$, so as $f_r$ is a continuous function with no critical points in $I$, it follows that $f_r(I)\subset I$. As $f_r(0)=r\in I$, inductively, $f_r^n(0)\in I$ for all $n\ge 1$. Hence $0>f_r^n(0)=a_n/c^{2^n}$, and hence $a_n<0$ for $n\ge 1$.
% \end{proof}
We can now prove that if $c\in \Z^-$ is non-square in $\Zi$, then $b_n^\pm$ is not a square in $\Zi$:
\begin{theorem}\label[theorem]{thm:negative}
    Suppose $c\in \Z^- \backslash \Zi^2$ is a negative integer. Then $f_r^n(x)$ is irreducible for all $n\ge1$.
\end{theorem}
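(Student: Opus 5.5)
The plan is to reduce everything to rational integers, using \cref{lem:negative} and the explicit shape of squares in $\Zi$, and then apply \cref{lem3.2}. Write $c=-k$ with $k\in\Z^+$. A Gaussian integer $(x+iy)^2$ is real only if $xy=0$, so the only negative integers that are squares in $\Zi$ are $-y^2$. Hence the hypothesis $c\notin\Zi^2$ means exactly that $k$ is not a perfect square in $\Z$. In particular $c\neq -1$, and $f_r$ is irreducible by \cref{prop2.1}.

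First I show $f_r^2$ is irreducible, again via \cref{prop2.1}. I check that $c=\alpha^2(2i-\alpha^2)$ has no solution with $c\in\Z^-$. Put $w=\alpha^2=x+iy$. Then $\operatorname{Im}(2iw-w^2)=2x-2xy$, so either $x=0$ or $y=1$.
\begin{itemize}
\item If $y=1$, then $w=x+i$ would have to be a square $(a+bi)^2$, forcing $2ab=1$, which is impossible.
\item If $x=0$, then $w=iy$ is a square only when $y=\pm 2t^2$. This gives $c=y(y-2)$, which equals $2t^2(2t^2-2)\ge 0$ or $2t^2(2t^2+2)>0$. Neither is a negative integer.
\end{itemize}
So $f_r^2$ is irreducible, and by \cref{lem3.2} it suffices to show that $\{b_n^+(c),b_n^-(c)\}$ contains no squares in $\Zi$ for every $n\ge 3$.

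Next, fix $n\ge 3$ and suppose $a_n$ is a square in $\Zi$; otherwise there is nothing to prove. By \cref{lem:negative}, $a_n\in\Z^-$ and $a_{n-1}\in\Z$. By the remark above, $\sqrt{a_n}=is$ for some $s\in\Z$ with $a_n=-s^2$. Then $b_n^\pm=i(a_{n-1}\pm is)=\mp s+ia_{n-1}$, and it suffices to treat $b=-s+ia_{n-1}$ (replace $s$ by $-s$ for the other sign).

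Suppose $b=(p+qi)^2$ with $p,q\in\Z$. Comparing real and imaginary parts gives $p^2-q^2=-s$ and $2pq=a_{n-1}$. Substituting into the recurrence with $m=2^{n-1}-1$ yields
$$c^m=a_n-a_{n-1}^2=-s^2-a_{n-1}^2=-(p^2-q^2)^2-4p^2q^2=-(p^2+q^2)^2.$$
Since $m$ is odd, $-c^m=k^m$, so $k^m=(p^2+q^2)^2$ is a perfect square. Because $m$ is odd, this forces $k$ itself to be a perfect square: every prime exponent in $k^m$ is even, so every prime exponent in $k$ is even. This contradicts the first paragraph. Hence neither $b_n^+$ nor $b_n^-$ is a square for any $n\ge 3$, and \cref{lem3.2} gives irreducibility of $f_r^n$ for all $n\ge 1$.

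The only step I expect to need care is the identity $c^m=-(p^2+q^2)^2$, since it requires getting the signs in $b_n^\pm$ right. It works uniformly for both signs because $s$ enters only through $s^2$. This route also sidesteps \cref{prop4.5}, which explicitly excludes $\Z^-$, and needs no bounds on $|c|$.
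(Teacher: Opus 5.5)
Your proof is correct and has the same skeleton as the paper's. Both use \cref{prop2.1} for $f_r$ and $f_r^2$, \cref{lem:negative} to force $\sqrt{a_n}\in i\Z$, and \cref{lem3.2} to reduce to $b_n^\pm$. Both close with the same contradiction: $(-c)^m$ is a perfect square with $m$ odd, so $-c$ is a square.

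The difference is in how you produce that perfect square. The paper routes through the coprime factorization $c=uv$ with $d\in\{1,2\}$ from \cref{lemma4.1}. A square $b_n^\pm$ makes one of $N(u),N(v)$ a square. The equality $|iB+C|=|iB-C|$ then links $|u|^m$ and $|v|^m$ up to a factor $d^2$, so both norms are squares and hence $|c|^m$ is a square.

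You instead write $b=(p+qi)^2$ and use $(p^2-q^2)^2+(2pq)^2=(p^2+q^2)^2$. Equivalently: because $\sqrt{a_n}\in i\Z$ and $a_{n-1}\in\Z$, one has $b_n^-=-\overline{b_n^+}$, hence $-c^m=b_n^+\overline{b_n^+}=N(b_n^\pm)$, and squares in $\Zi$ have square norms. Your version is self-contained. It does not lean on \cref{lemma4.1}, whose proof the paper only sketches, or on the $u,v,d$ bookkeeping. It also isolates the one fact that matters, $N(b_n^\pm)=|c|^m$. The paper's version has only the modest advantage of reusing machinery it already set up for \cref{prop4.5}.

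For $f_r^2$, the paper reduces mod $4$ to $c\equiv 0$, writes $\alpha=(1+i)\eta$, and studies $\Imag\bigl(4\eta^2(\eta^2\pm1)\bigr)$. You compute $\Imag(2iw-w^2)=2x(1-y)$ for $w=\alpha^2$ directly. Both are elementary; yours is a little shorter. One trivial slip: $2t^2(2t^2+2)=0$ at $t=0$, so you should write $\ge 0$ there; it is still never negative.
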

\begin{proof}

    Since $c$ is a negative integer, $a_n(c)\in \Z^-$ by \cref{lem:negative}. Thus, if $a_n(c)$ is a square in $\Zi$, $\sqrt{a_n(c)}\in i\Z$. If $b_n^\pm (c)$ is a square in $\Zi$ for some $n>2$, then by the proof of \cref{prop4.5} we may write $c=uv$ with $\gcd(u,v)=1$ such that $\{N(u),N(v)\}$ contains a square, and with $$(v^m,u^m)=\left(\frac{1}{d}(\pm\sqrt{a_n}-a_{n-1}),d(\pm\sqrt{a_n}+a_{n-1})\right)=\left(\frac{iB-C}{d},d(iB+C)\right)\quad \text{ for } B, C\in \Z, \text{ with } d \in \{1,2\}.$$
%Without loss of generality, suppose $N(\frac{iB-C}{d})=N(u^m)$, with $N(v^m)=N(d(iB+C))$.
By multiplicativity of the norm $N(\cdot) = N_{\C /\R} (\cdot)$, we have $N(v^m)= d^4\cdot N(u^m)$. Thus one of $\{N(u),N(v)\}$ being a square forces the other to be a square. Note $|v^m|=d^2|u^m|$. Since $c^m=u^mv^m$, we have $|c^m|=|c|^m=|u^m||v^m|=d^2|u^m|^2=d^2N(u)^m$ is a square in $\Z$. Since $m$ is odd, we must have that $|c|=-c$ is a square in $\Z$ (so in particular, $c$ is a square in $\Zi$), contrary to our hypothesis. Thus $b_n^\pm(c)$ is not a square in $\Zi$ for all $n> 2$.\\

We now show $f_r^2(x)$ is irreducible to apply \cref{lem3.2}, it will follow that we have $f_r^n$ irreducible over $\Zi$ for all $n\ge 1$. If $c\neq \alpha^2(2i-\alpha^2) \in \Z^-\backslash \Zi^2$, then $f^2_r$ is irreducible by \cref{prop2.1}: suppose the opposite for a contradiction.

If $c= \alpha^2(2i-\alpha^2)$, then $c\in \{-1\pm 2i, 0\} \bmod 4.$ We may assume $c\equiv 0 \bmod 4$ since $c\in \Z^-$. Then $(1+i)\mid \alpha$, so $c=4\eta^2(\eta^2\pm 1)$ for some $\eta\in \Zi$. Since $c\in \Z$, we may assume $\eta\in \Z\cup i\Z$: if $\eta=A+Bi$, then $$0=\Imag(c)=\Imag(4\eta^2(\eta^2\pm1))=16A^3B-16AB^3\pm 8AB.$$ If both $A,B$ are non-zero, then $16A^2-16B^2\pm 8=0$, so $\pm8=16(B^2-A^2)$. This is a contradiction, so we may assume $A$ or $B=0$.

Thus $\eta \in \Z\cup i\Z$. If $|\eta|>1$ in $\Z$ or $i\Z$ we have $c=4\eta^2(\eta^2\pm 1) \in\Z^+$. If $|\eta|=1$, we have $c\in \{0, 8\}$. Thus $c=\alpha^2(2i-\alpha^2)\in \Z^-$ is a contradiction, so $c\neq \alpha^2(2i-\alpha^2)$.

\end{proof}

The lower bound on $n$ from \cref{prop4.5} leads to the following $\Zi$ analogue of \cite[Corollary 4.6]{evstb}:

\begin{corollary}\label{cor4.6}
     %(Demark-Hindes-Jones-Misplon-Stoll-Stoneman, )
     Let $f(x)=x^2+1/c$ for $c\in \Zi\backslash\Zi^2$ with $|c|\ge 5$.
\begin{enumerate}
    \item If $f^k$ is irreducible for $$k=1+\left\lfloor\log_2 \left( 1+\frac{\log{4}+\Xi(|c|)/\sqrt{|c|}}{\log{\sqrt{1+1/|c|}}}\right)\right\rfloor,$$
 then all $f^n$ are irreducible. \label{itm:mbound}
    \item If $a_2(c)=c+1$ and $a_p(c)$ are non-square for all prime numbers $p$ with $$5\le p\le 1+\left\lfloor\log_2 \left( 1+\frac{\log{4}+\Xi(|c|)/\sqrt{|c|}}{\log{\sqrt{1+1/|c|}}}\right)\right\rfloor,$$
    then all $f^n$ are irreducible. \label{itm:pbound}
    %damn\item If $a_2=c+1$ is non-square, $|c|>50$, and $\tilde{q}(c)\ge 1.15|c|^{-1/30}$, then all $f^n$ are irreducible.\label{itm:tilde}
\end{enumerate}
\end{corollary}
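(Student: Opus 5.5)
The plan is to combine the lower bound of \cref{prop4.5} with the uniform estimates $|\epsilon(n,c)|<\Xi(|c|)$ (from \cref{def4.5}) and $\tilde q(c)\ge q(c)>\sqrt{1+1/|c|}$ (from \cref{def4.3}). Then $|\epsilon(n,c)/\sqrt{c}|<\Xi(|c|)/\sqrt{|c|}$. Since the right side of the inequality in \cref{prop4.5} is increasing in $|\epsilon|$ and decreasing in $q$ (or $\tilde q$), every $n\ge k$ satisfies
\[
n \ge 1+\log_2\left(1+\frac{\log 4+\Xi(|c|)/\sqrt{|c|}}{\log\sqrt{1+1/|c|}}\right)\ge 1+\log_2\left(1+\frac{|\epsilon(n,c)/\sqrt{c}|+\log 4}{\log\tilde q(c)}\right).
\]
Here $k$ is the integer defined in the statement, which is $1$ plus the floor of the bracketed logarithm, so it exceeds that logarithm. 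Hence, by \cref{prop4.5}, for every $n\ge k$ we get that $a_n(c)$ is non-square and that $\{b_n^+(c),b_n^-(c)\}$ contains no squares. The value of $k$ is enormous, since $\log\sqrt{1+1/|c|}\approx 1/(2|c|)$, so $k\ge 5$ automatically and the hypothesis $n\ge 5$ of \cref{prop4.5} is met.

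\cref{prop4.5} requires $c\notin\Z^-$, so I split off the negative-integer case first. If $c\in\Z^-\setminus\Zi^2$, \cref{thm:negative} already gives stability and both items hold trivially. So assume $c\notin\Z^-$.

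For \cref{itm:mbound}: if $f^k$ is irreducible, I argue inductively. The remark after \cref{lem3.2} says that if $f^n$ is irreducible and $\{b_{n+1}^\pm\}$ contains no squares, then $f^{n+1}$ is irreducible. The previous paragraph supplies the square-free condition for all $n+1>k$, so irreducibility propagates from $f^k$ to all $f^n$ with $n\ge k$. For $n<k$, irreducibility of $f^n$ follows from that of $f^k$, since a factorization of $f^n$ yields one of $f^k=f^n\circ f^{k-n}$.

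For \cref{itm:pbound}: $f$ is irreducible because $c\notin\Zi^2$. I want to show $a_n$ is non-square for all $n\ge2$ and then apply \cref{lemma2.2}.
\begin{itemize}
\item For $n\ge k$ this is \cref{prop4.5}.
\item For $2\le n<k$, if $n$ is even, use $a_2=c+1$ non-square together with the propagation argument in the proof of \cref{prop3.5}, which gives $a_{2j}$ non-square.
\item If $3\mid n$, use \cref{prop3.3} plus rigid divisibility, handling the excluded values $\pm2i,-89$ separately: $-89$ is a negative integer, and $\pm2i$ have $|c|<5$.
\item Otherwise $n$ has a prime factor $p\ge5$ with $p\le n<k$, so $a_p$ is non-square by hypothesis, and the argument of \cref{prop3.5} transfers this to $a_n$.
\end{itemize}
With all $a_n$ ($n\ge2$) non-square, \cref{lemma2.2} gives stability.

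The main obstacle is the transfer from $a_p$ non-square to $a_{kp}$ non-square. Rigid divisibility only preserves valuations of primes dividing $a_p$, so it fails if $a_p$ is a unit times a square, e.g.\ $a_p=i\alpha^2$. The proof of \cref{prop3.5} handles this only for $c\in\{0,1\}\bmod 2$. For $c\in\{i,1+i\}\bmod 2$, however, \Cref{tab:2a} already makes every $a_n$ non-square, so this gap closes. I expect some care to be needed with the $3\mid n$ case when $a_3$ is a unit times a square as well.
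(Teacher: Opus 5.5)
Your route is the paper's own. The paper's proof just transplants the proof of \cite[Corollary 4.6]{evstb} with $|\epsilon(n,c)|<\Xi(|c|)$ in place of $\epsilon(n,c)\le\epsilon(c)$, and your write-up is that transplant, with the $c\in\Z^-$ case split off explicitly via \cref{thm:negative}. Two steps need repair.

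The minor one is an off-by-one. Write $X$ for the bracketed $\log_2$. Then $k=1+\lfloor X\rfloor>X$, but $k<1+X$ unless $X\in\Z$. So your claim ``every $n\ge k$ satisfies $n\ge 1+X$'' fails at $n=k$, and the uniform bounds only guarantee that $n\ge k+1$ falls within the range of \cref{prop4.5}. This costs nothing in (1), where you only use $n+1>k$. In (2), run your case analysis over $2\le n\le k$ instead of $2\le n<k$; this works because the hypothesis includes primes $p\le k$.

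The substantive problem is how you close the obstacle you correctly identified. For $c\equiv 1+i\pmod 2$, the \Cref{tab:2a} entry you invoke is wrong. Indeed $c^2\equiv 2i\equiv 0\pmod 2$ and $a_2=c+1\equiv i$, so $a_n\equiv a_{n-1}^2\equiv 1\pmod 2$ for every $n\ge 3$, and $1$ is a square mod $2$. The paper's remark that one may assume $c\in\{0,1\}\bmod 2$ rests on this same entry, so the paper's argument has the same hole.

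Most of this is easily repaired. For odd $p$, $a_p\equiv 1\pmod 2$ cannot be $\pm i\beta^2$, so rigid divisibility carries non-squareness from $a_p$ to every $a_{jp}$, exactly as in the $c\equiv 0$ case of \cref{prop3.5}. What stays unproved is $a_{2^e}$ for $4\le 2^e\le k$ when $c+1=\pm i\alpha^2$, that is, $c=i\beta^2-1$ with $\beta$ prime to $1+i$ (e.g.\ $c=-5\pm 3i$). For such $c$, $a_2$ is automatically non-square, so they fall within the scope of (2). There $a_{2^e}=(c+1)P_{2^e}(c)$ with $P_{2^e}(c)\equiv i\pmod 2$. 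This is exactly the residue class of $\pm i\gamma^2$ with $\gamma$ odd, so the parity argument $P_{2n}(c)\equiv 1\pmod 2$ used for $c\equiv 1\pmod 2$ says nothing. You need a further argument for this family, or must exclude it, before (2) is proved. Part (1) is unaffected.
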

\begin{proof}
This is the proof \cite[Corollary 4.6]{evstb} using $|\epsilon(n,c)|\le \Xi(|c|)$ instead of $\epsilon(n,c)\le \epsilon(c)$.
\end{proof}

The $\Zi$ analogue of \cite[Proposition 5.4]{evstb} is again proved with minor modifications:

\begin{proposition}\label[proposition]{prop5.4}
%(Demark-Hindes-Jones-Misplon-Stoll-Stoneman, )
Let $c\in \Zi$ and set $f(x)=x^2+1/c$. If $N(c)\le 10^9$ and the second iterate $f^2$ is irreducible, then all iterates of $f$ are irreducible.
\end{proposition}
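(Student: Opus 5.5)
The plan mirrors \cite[Proposition 5.4]{evstb}: reduce stability to finitely many square tests using \cref{cor4.6}, and dispose of everything else with the congruence tables and the special cases already proved.

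\textbf{Reduction to a finite search.} First, by \cref{prop1} I may replace $c$ by $\bar c$ and so restrict to $c$ in quadrants 1 and 3. If $c\in\Zi^2$ then $f$ is reducible, so $c\notin\Zi^2$. By \cref{prop2.1}, irreducibility of $f^2$ means $c\neq\alpha^2(2i-\alpha^2)$. Then:
\begin{enumerate}
\item Negative integers $c$ are handled by \cref{thm:negative}.
\item The finitely many $c$ with $|c|<5$ are checked directly. For them I would locate a congruence from \Cref{tab:2a} or \Cref{tab:2b}, using \cref{prop3.5}, or apply \cref{thm3.6}.
\end{enumerate}
So I may assume $|c|\ge5$ and $c\notin\Z^-$.

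\textbf{Bounding the number of iterates to check.} For such $c$, \cref{cor4.6}\ref{itm:pbound} says it suffices to check two things: that $a_2=c+1$ is non-square, and that $a_p(c)$ is non-square for primes $5\le p\le k(c)$, where $k(c)$ is the explicit bound. The case $p=3$ is covered by \cref{prop3.3}, except for $c=-89$, which is excluded since it is a negative integer. Since $\log\sqrt{1+1/|c|}\approx 1/(2|c|)$ and $\Xi(|c|)\le 6.05$, we get $k(c)\approx 1+\log_2(2|c|\log 4)$. With $N(c)\le10^9$, i.e.\ $|c|\le\sqrt{10^9}\approx 31623$, this gives $k(c)\le 18$ or so. Hence the only primes to test are $p\in\{5,7,11,13,17\}$.

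A sharper route is to use $\tilde q(c)$ in place of $q(c)$, via \cref{prop4.5} and \cref{lem3.2}. This shortens the list further whenever $q(c)$ is noticeably larger than its worst-case bound $\sqrt{1+1/|c|}$.

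\textbf{Handling the remaining $c$.} First I would sieve with the congruences in \Cref{tab:2a} and \Cref{tab:2b}, together with cases \ref{itm:1mod2}--\ref{itm:odd} of \cref{thm3.6}. This removes most of the roughly $10^9$ Gaussian integers at once. The case $c+1\in\Zi^2$, where $a_2$ is a square, does not fit \cref{cor4.6}\ref{itm:pbound}. There I would instead use \cref{cor4.6}\ref{itm:mbound}: test the $b_n^\pm$ directly for $3\le n\le k(c)$, using \cref{lem3.2} and the remark after it, starting from irreducible $f^2$.

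For the surviving $c$, I would show $a_p(c)$ is non-square for each prime $p\le k(c)$ by a cheap certificate: find a small Gaussian prime $\pi$ with $\glegendre[2]{a_p(c)}{\pi}=-1$. Here $a_p(c)\bmod\pi$ is computed by the recurrence, so the huge number $a_p(c)$ is never formed. Any $c$ for which no certificate turns up among the first few hundred primes would then get an exact integer square-root test.

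\textbf{Main obstacle.} The hard step is computational: carrying the sieve over all $c$ with $N(c)\le10^9$ efficiently and making sure every residual case is certified. The cases I expect to need the most care are:
\begin{enumerate}
\item $c\equiv-1\bmod16$, where the mod-$2^k$ argument of \cref{1mod2} fails;
\item $c$ with $c+1$ a square;
\end{enumerate}
These need either the exact $b_n^\pm$ computation or the $\tilde q(c)$ refinement.
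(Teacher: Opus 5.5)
Your overall route matches the paper's. You clear the small cases with \Cref{tab:2a}, \Cref{tab:2b} and \cref{thm3.6}. For $|c|\ge 5$ you invoke \cref{cor4.6} with the bound $k\le 17$ at $|c|=10^{4.5}$, which reduces everything to a finite check of $a_2$ and $a_p$ for $p\in\{5,7,11,13,17\}$. Two of your steps are more careful than the paper's sketch, which passes over both:
\begin{enumerate}
\item negative integers via \cref{thm:negative}, e.g.\ $c=-2$, which escapes both tables and \cref{thm3.6} since $c+1=i^2$;
\item $c+1\in\Zi^2$ with $|c|\ge 5$ via \cref{cor4.6}\ref{itm:mbound}.
\end{enumerate}

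The gap is in your small-$|c|$ step: $c=2+4i$, and by \cref{prop1} also $2-4i$.
\begin{enumerate}
\item Here $f^2$ is irreducible, but $c+1=(2+i)^2$ is a square, $2\mid c$, and $v_{1+i}(c)=2$ is even.
\item So none of \cref{thm3.6}\ref{itm:1mod2}--\ref{itm:odd} applies, \Cref{tab:2b} is unavailable, and no entry of \Cref{tab:2a} matches.
\item Since $|c|=\sqrt{20}<5$, \cref{cor4.6} is also unavailable. ``Checking directly'' therefore cannot mean testing finitely many iterates; you need an argument valid for every $n$.
\end{enumerate}

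The paper supplies one. We have $c\equiv -1+i \bmod 3$, which has order $8$ in $(\Zi/3\Zi)^\times$ with $c^{-1}\equiv 1+i$. Hence $c^{2^{n-1}-1}\equiv 1+i \bmod 3$ for $n\ge 4$. The recurrence then gives $a_n\equiv 1-i \bmod 3$ for all $n\ge 3$. Since $1-i$ is a non-residue mod $3$, every $a_n$ with $n\ge 3$ is non-square, and \cref{lem3.2} (with $f^2$ irreducible) finishes. With this case added, your plan is complete to the same extent as the paper's proof, i.e.\ modulo actually running the finite computation.
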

\begin{proof}
We consider the set of non-squares not of the form $\alpha^2(2i-\alpha^2)$: this is a necessary and sufficient condition for $f_r^2$ to be irreducible by \cref{prop2.1}. When $|c|< 5$, the only cases not handled by \cref{thm3.6} are $c \in \pm \{ 4i, 2+4i, 4+2i\}$, but $c=-4i$ has the same behavior as $4i$ by \cref{prop1}. \\

Note that when $c\in\{4i, \pm(4+2i), -2-4i\}$ we have that $c+1$ is prime in $\Zi$. Hence \Cref{tab:2b} applies to these $c$ with the following congruence pairs $(c,z)$: $(-i, 2+i)$ for $c= 4i\equiv -i\bmod 2+i$, $(\pm (1+i),3+i)$ for $c=\pm(4+2i)\equiv \pm (1+i) \bmod 3+i$, and $(-1+i,3+2i)$ for $c=-2-4i\equiv -1+i\bmod 3+2i$. Thus $a_n(c)$ is non-square in $\Zi$ for all $n\ge 2$ for these $c$.  \\

This leaves $c= 2+4i$, with $c+1=(2+i)^2$ a square. However $\{a_n\}_{n\ge3} =\{1-i, 1-i, ...\}\bmod 3$. Since $1-i$ is non-square mod 3, $a_n$ is non-square for all $n\ge 3$. Since $f^2$ is irreducible by hypothesis, $f^n$ is irreducible for all $n\ge 1$  by \cref{lem3.2}.\\

%When $c\in\{-2, 2+4i\}$ then $c+1\in \{-1,3+4i\}=\{i^2, (2+i)^2\}$. Thus $c+1$ is square in $\Zi$. .%If $c=2$, consider $\{a_n\}_{n\ge 1}\equiv \{1,-1,1+2i , 2-i, 1+2i,...\}\bmod 4+i$ and note that $1+2i, 2-i$ are non-squares mod $4+i$, so $a_n$ is non-square for all $n\ge 3$. Since $c=-2$ has $f_r^2$ irreducible by hypothesis, $f^n_r$ is irreducible for all $n\ge 1$ by \cref{lemma2.2}.\\

We now assume $|c|\ge5$. By \cref{thm3.6} it suffices to consider $2\mid c$ or $16\mid c+1$, with $\pm c$ in the first quadrant by Corollary \ref{prop1}. In either case, when $a_2(c)$, $a_p(c)$ are non-square for integer primes 

\begin{equation}
    \begin{split}
    5\le p&\le 1+\left\lfloor\log_2 \left( 1+\frac{\log{4}+\Xi(10^{4.5})/\sqrt{10^{4.5}}}{\log{\sqrt{1+1/10^{4.5}}}}\right)\right\rfloor = 17
\end{split}
\end{equation}

then all $f^n$ are irreducible by Corollary \ref{cor4.6}. This is a finite computation. \\
\end{proof}

 \section{Applications to the density of primes dividing orbits}

 \quad Let $K$ be a number field with ring of integers $\mathcal{O}_K$. Let $S\subset \mathcal{O}_K$ be a set of prime ideals, $\mathfrak{p}\in S$, and denote by $N(\mathfrak{p})=\#\{\mathcal{O}_K/\mathfrak{p}\mathcal{O}_K\}$ the absolute norm of the ideal $\mathfrak{p}$. We denote by $D(S)$ the Dirichlet density of the set $S$: $$D(S)= \lim_{s\to 1^+} \frac{\sum_{\mathfrak{p}\in S} N(\mathfrak{p})^{-s}}{\sum_{\mathfrak{p}} N(\mathfrak{p})^{-s}}$$
 
 Since $K$ is a number field, this density can be shown to be equivalent to the the natural density of $S$, denoted by $$\delta(S) = \lim_{B\to \infty} \frac{\#\{\mathfrak{p}\in S : N(\mathfrak{p})\le B\}}{\#\{\mathfrak{p}: N(\mathfrak{p})\le B\}}.$$
 
The following is an extension to $\Zi$ of \cite[Proposition 6.1]{evstb}, requiring minor modifications:
 \begin{theorem}\label[theorem]{thm6.1}
 %(Demark-Hindes-Jones-Misplon-Stoll-Stoneman, )
     Let $c,c+1\in \Zi\backslash\Zi^2$, $r=1/c$, and assume that $b_n^\pm(c)=i(a_{n-1}(c)\pm \sqrt{a_n(c)})$ is non-square in $\Zi$ for all $n\ge 3$. Then for any $t\in \Qi$, we have $$D(\{\pi \text{ prime in } \Zi: \pi \text{ divides } O_{f_r}(t)\})=0.$$
 \end{theorem}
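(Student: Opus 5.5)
The plan follows the Jones/Odoni strategy used in \cite[Proposition 6.1]{evstb}. The aim is to show that the Galois groups $G_n:=\Gal(K_n/\Qi)$ of the splitting fields $K_n$ of $f_r^n(x)$ have the property that the proportion of elements fixing at least one root of $f_r^n$ tends to $0$ as $n\to\infty$. Chebotarev is then applied in each $K_n$.

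\textbf{Reduction to fixed points.} Fix $t\in\Qi$. Suppose a prime $\pi$ divides $O_{f_r}(t)$, say $v_\pi(f_r^{j}(t))>0$, and avoid the finitely many primes dividing $c$ or the denominator of $t$. Then $f_r^{j}(t)\equiv 0 \bmod \pi$, and for each $n\ge j$ the element $f_r^{n-j}(0)$ is congruent modulo $\pi$ to $f_r^{n}(t)$. For $n\ge j$ it follows that $f_r^{n}(x)-f_r^{n-j}(0)$ has a root $\bmod\ \pi$.

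The standard trick is cleaner: whenever $\pi$ divides some orbit element, the polynomial $f_r^n(x)$ acquires a root modulo $\pi$ for all large $n$, by backward iteration using rigid divisibility. Equivalently, $t$ reduces to a preimage of $0$, and $f_r^{j}$ has a root $\bmod\ \pi$, namely $\bar t$. Hence, for each fixed $n$, the set of such $\pi$ of large norm is contained in the union of two sets:
\begin{itemize}
\item $\{\pi : f_r^n \text{ has a root mod } \pi\}$;
\item $\{\pi : \pi \mid f_r^{j}(t)\ \text{for some } j<n\}$, which is finite.
\end{itemize}
For unramified $\pi$, Chebotarev gives $D(\{\pi: f_r^n\text{ has a root mod }\pi\})$ equal to the proportion of $g\in G_n$ fixing some root of $f_r^n$. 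The target density is therefore bounded above by this proportion for every $n$.

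\textbf{Galois theory.} Irreducibility of all $f_r^n$ follows from the hypotheses and \cref{lem3.2}, since $c,c+1$ non-square gives $f_r^2$ irreducible via \cref{prop2.1} and $a_2=c+1$. The key input is then the stronger statement that $K_n/K_{n-1}$ has maximal degree $2^{2^{n-1}}$ often enough. Following Jones, the quadratic subextension $K_{n-1}(\sqrt{f_r^n(0)})$ governs this, and in $\Qi$ the relevant discriminant condition becomes non-squareness of $a_n$. Since $a_n$ may in fact be a square, the refined criterion is needed. Adjoin $\sqrt{a_n}$ and use the factorization $f_r^n(x)=\bigl(f_r^{n-1}(x)-\sqrt{-r}\,\cdot\bigr)\bigl(\cdots\bigr)$ over $\Qi(\sqrt{-r})$. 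The relevant norms become, up to squares, $b_n^\pm(c)$. Non-squareness of $b_n^\pm$ is then exactly what guarantees that the elements of $G_n$ acting on level $n$ include the ``odd'' automorphisms at each step. This is the step I expect to be the main obstacle: building the correct tower, namely $K_{n-1}\subset K_{n-1}(\sqrt{a_n})\subset K_n$, possibly with $\Qi(\sqrt{c})$ and $\Qi(\sqrt{c+1})$ adjoined. Within it I would verify, via the Capelli/discriminant computation of \cref{lemma2.2} applied over $\Qi(\sqrt{a_n})$ to the two conjugate factors, that $G_n$ contains an element acting as the generator of the last-level $\Z/2$ above every vertex of a positive proportion of the tree.

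\textbf{Martingale step.} With that in hand, I would run Odoni's/Jones's martingale argument. Let $X_n(g)$ be the number of fixed points of $g\in G_n$ on roots of $f_r^n$, under Haar measure on $G_\infty=\varprojlim G_n$. Then $X_n$ is a nonnegative integer martingale, because each fixed root has two preimages and $g$ fixes both or swaps them with equal conditional probability. That equal probability is guaranteed precisely by the maximality or odd-element property at infinitely many levels. Such a martingale converges almost surely, hence is eventually constant. The probability of being eventually constant at a positive value is zero whenever infinitely many steps are ``fair coin'' steps. Therefore $\Pr(X_n>0)\to 0$, and letting $n\to\infty$ in the Chebotarev bound gives density $0$.
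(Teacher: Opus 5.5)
Your reduction to ``$f_r^n$ has a root modulo $\pi$'' followed by Chebotarev over $\Qi$ is fine. The justification is reversed, though: if $\pi\mid f_r^j(t)$ with $j\ge n$, then $f_r^{j-n}(t)$ is a root of $f_r^n$ modulo $\pi$, and no backward iteration or rigid divisibility is needed.

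The genuine gap is the Galois step. You flag it as the main obstacle but never carry it out, and it is the whole content of the theorem. The martingale argument needs two things:
\begin{itemize}
\item[(a)] $\Gal(K_n/K_{n-1})\neq 1$ at \emph{every} level. This is what makes the children of a fixed vertex fixed or swapped with conditional probability $1/2$, so it is needed at every level, not just infinitely many.
\item[(b)] For infinitely many $n$, $\Gal(K_n/K_{n-1})$ is the full elementary abelian $2$-group on the relevant level, which gives independent fair coins above the fixed vertices.
\end{itemize}
Your substitute condition (an element swapping the children above each vertex of a positive proportion of the tree) does not force $X_n\to 0$. For instance, if every lift of $\sigma$ swaps the children above exactly one of its two fixed vertices, then $X_n=X_{n-1}=2$ whichever lift is chosen.

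Condition (b) means the relevant element $f_r^{n-1}(0)\pm i\sqrt r$, on the two subtrees over $\Qi(\sqrt r)$, is a non-square in the splitting field $K_{n-1}$. Your hypothesis, via the proof of \cref{lem3.2}, only gives non-squareness in $\Qi(\sqrt r)$, and Capelli turns that into irreducibility, not into largeness of $\Gal(K_n/K_{n-1})$.

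The upgrade is not routine here. Suppose $a_n\in\Zi^2$ and put $y=f_r^{n-1}(0)$, $z=\sqrt{f_r^n(0)}$, $\rho=i\sqrt r$. Then $(y+z+\rho)^2=2(y+z)(y+\rho)$, so for $n\ge 3$ we get $y+\rho\equiv b_n^\pm$ modulo squares of $\Qi(\sqrt r)$. Since $b_n^+b_n^-=c^{2^{n-1}-1}$ with $\gcd(b_n^+,b_n^-)\mid 2$, every odd prime dividing $b_n^\pm$ to odd order has $v_\pi(c)$ odd. Such a prime ramifies in $\Qi(\sqrt r)=\Qi(\sqrt c)\subseteq K_{n-1}$. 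So the standard device, a prime of odd valuation unramified in $K_{n-1}$, is unavailable away from $1+i$. Likewise, your tower $K_{n-1}\subset K_{n-1}(\sqrt{a_n})\subset K_n$ degenerates exactly when $a_n$ is a square, which is the only case where the refined $b_n^\pm$ hypothesis matters.

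The paper does not reprove any of this; it imports it. It passes to $K=\Qi(\sqrt r,\sqrt{\bar r})$, where $f_r=(x+i\sqrt r)(x-i\sqrt r)=g_1g_2$. It checks that $g_i(f_r^{n-1}(0))=f_r^{n-1}(0)\pm i\sqrt r$ is a non-square in $K$ for all $n\ge 2$: this is equivalent to $b_n^\pm\notin\Zi^2$ over $\Qi(\sqrt r)$, plus a short argument that these elements stay non-squares in $K$. It then applies Hamblen--Jones--Madhu \cite[Theorem 1.1(2)]{orbits} to the two subtrees; that theorem is where the martingale and the maximality input live. Finally it descends from degree-one primes of $K$ to primes of $\Zi$ via norms and Chebotarev.

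To close your argument you have two options:
\begin{itemize}
\item Cite that theorem in place of your ``Galois theory'' and ``martingale'' steps, and descend as the paper does.
\item Prove (a) and (b) yourself for the two subtrees over $\Qi(\sqrt r)$. Then your Chebotarev reduction over $\Qi$ does finish the proof, since elements of $\Gal(K_n/\Qi)$ not fixing $\sqrt r$ swap the two subtrees and so fix no root.
\end{itemize}
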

 \begin{proof}
Let $K:=\Qi(\sqrt{r},\sqrt{\bar{r}})$, and note $K_1:=\Qi(\sqrt{r})$ (from the proof of \cref{lem3.2}) is a subfield. Now $f_r=(x+i\sqrt{r})(x-i\sqrt{r})$ over $K$ or $K_1$. Let $g_1=(x+i\sqrt{r})$, $g_2=(x-i\sqrt{r})$. We'll show $g_i(f^{n-1}(0))$ is non-square in $K$ for all $n\ge 2$. Note $g_i(f^{n-1}_r(0))=f_r^{n-1}(0)\pm i\sqrt{r}\in K_1,$ and that following the proof of \cref{lem3.2}, $f_r^{n-1}(0)\pm i\sqrt{r}$ is a square in $K_1$ if and only if $i(f_r^{n-1}(0)\pm \sqrt{f^n_r(0)})$ is a square in $\Qi,$ which is equivalent to $b_n^\pm=i(a_{n-1}\pm \sqrt{a_n})$ being a square in $\Zi$. By assumption, $b_n^\pm$ is non-square in $\Zi$ for any $n\ge 3$, so $g_i(f^{n-1}_r(0))$ is not a square in $K_1$. If it were square in $K$, then there exist $a,b\in K_1$ such that $g_i(f^{n-1}_r(0))=(a+b\sqrt{\bar{r}})^2$ with $b\neq 0$. Then $g_i(f^{n-1}_r(0))=f_r^{n-1}(0)\pm i\sqrt{r}$ can be written as
        $$a^2+\bar{r}b^2+2ab\sqrt{\bar{r}}= f_r^{n-1}(0)\pm i\sqrt{r}.$$
 In particular, $2ab=0$ otherwise $2ab\sqrt{\bar{r}}\not\in K_1$ can be written as $ f_r^{n-1}(0)\pm i\sqrt{r} -(a^2+\bar{r}b^2)\in K_1$, so $a=0$ since $b\neq 0$ by assumption. Thus $$f_r^{n-1}(0)\pm i\sqrt{r} = \bar{r}\cdot b^2\in \Qi$$
is a contradiction since  $f_r^{n-1}(0)\pm i\sqrt{r}\in K_1\backslash\Qi$. Thus $f_r^{n-1}(0)\pm i\sqrt{r}$ is not a square in $K$.
%Moreover, $g_i(f_r(0))= r\pm i\sqrt{r}$, which is a square in $K$ if and only if $(r\pm \sqrt{r^2+r})/2$ is a square in $\Qi$. Because $c+1$ is non-square in $\Qi$, it follows that $r^2+r=\frac{1}{c^2}+\frac{1}{c}=\frac{1+c}{c^2}$ is not a square in $\Qi$ either, proving that $g_i(f_r(0))$ is non-square in $\Qi$. \\

Thus we may apply part $(2)$ of \cite[Theorem 1.1]{orbits} twice to show 

\begin{equation}\label{pdens}
    0=\lim_{B\to \infty} \frac{\# \{ \mathfrak{p}\in S: N(\mathfrak{p})\le B\}}{\# \{ \mathfrak{p}: N(\mathfrak{p})\le B\}}
\end{equation}
where $N(\mathfrak{p})= \#(\mathcal{O}_K /\mathfrak{p}\mathcal{O}_K)$ is the absolute norm of the ideal $\mathfrak{p}$ and $S$ is the set of primes $\mathfrak{p}$ in the ring of integers $\mathcal{O}_K$ of $K$ that divide $g_i(f^{n-1}_r(t))$ for at least one $i\in \{1,2\}$ and at least one $n\ge 2$.\\

Note $K=\Qi(\sqrt{r},\sqrt{\bar{r}})$ is a Galois extension of $\Q$ of degree 8. Excluding ramified primes $\mathfrak{p}\in \mathcal{O}_K$ (this is a finite set of primes), $\mathfrak{p}\in \mathcal{O}_K$ either has absolute norm $p\in \Z$ (when $p$ splits in $\mathcal{O}_K$), or absolute norm $p^k$ for $2\le k\le 8$ otherwise. We may thus assume that $N(\mathfrak{p})=p^k$ for some $1\le k\le 8$.\\
 %This is because $K=\Qi(\sqrt{r})$ is a quadratic Galois extension of $\Qi$, and $N_{K/\Qi}(\mathfrak{q})= \mathfrak{p}^{[\mathcal{O}_K/\mathfrak{q}:\Zi/\mathfrak{p}]}$ when $\mathfrak{p}$ is the prime ideal lying below $\mathfrak{q}$.
 
 %If $\mathfrak{p}$ has norm $p$ then $p$ splits in $\mathcal{O}_K$. If $\mathfrak{p}$ has norm $p^2$ then $p$ is inert in $\mathcal{O}_K$.\\
 
 Note that for $k\ge 2$, $\#\{n\le B: n=p^k \text{ for some prime } p\}$ has asymptotic density zero relative to $\# \{n\le B: n=p  \text{ for some prime } p\}$ and so \cref{pdens} is equivalent to 

 \begin{equation}\label{pdens1}
     0=\lim_{B\to \infty} \frac{\# \{ \mathfrak{p}\in S: N(\mathfrak{p})=p\le B\}}{\# \{ \mathfrak{p}: N(\mathfrak{p})=p\le B\}}
 \end{equation}

Suppose $\mathfrak{p}\in S$, and say $\mathfrak{p}\mid g_i(f^{n-1}_r(t))$ for some $n\ge 2$. Then
$$N_{K/\Qi}(\mathfrak{p}) \mid N_{K/\Qi}(g_i(f_r^{n-1}(t)))=\prod_{\sigma\in \Gal(K/\Qi)} \sigma(g_i(f_r^{n-1}(t)))= (f_r^n(t))^2.$$

Let $(p)=\Z\cap \mathfrak{p
}\mathcal{O}_K$, $(\pi)=\Zi\cap \mathfrak{p}\mathcal{O}_K$ be the primes lying below $\mathfrak{p}$, and note $\pi \mid N_{K/\Qi}(\mathfrak{p})$. Note that $N(\mathfrak{p})=p$ if and only if $p$ splits in $\mathcal{O}_K$. This is true if and only if $\pi$ splits in $\mathcal{O}_K$, i.e. $N(\pi)=p$ and $r$ is a quadratic residue mod $\pi$ and $\overline{\pi}$. Otherwise, $N(\mathfrak{p})=p^k$ for some $2\le k\le 8$. But $\pi\mid (f^n_r(t))^2$, so $\pi \mid f^n_r(t)$ and thus $\overline{\pi}\mid \overline{f^n_r(t)}=f^n_{\overline{r}}(\overline{t})$ with $$0\equiv f_r(f_r^{n-1}(t))\equiv (f_r^{n-1}(t))^2+r \bmod \pi$$ and $$0\equiv f_{\overline{r}}(f_{\overline{r}}^{n-1}(\overline{t}))\equiv (f_{\overline{r}}^{n-1}(\overline{t}))^2+\overline{r} \bmod \overline{\pi}.$$ Thus $r$ must be a quadratic residue mod $\pi$, and $\overline{r}$ is a quadratic residue mod $\overline{\pi}$ implies $r$ is one as well. %We must also have $-1$ a quadratic residue mod $p$: if $p\equiv 3\bmod4$, $p$ is inert in $\Zi$, hence $N(\mathfrak{p})=p^k$ for some $2\le k\le 4$. 
It follows that the numerator of \cref{pdens1} is $$4\#\{\pi \in \Zi : N(\pi)=p\le B \text{ and } \pi \text{ divides } O_f(t) \}.$$ The denominator is
$$4\#\{\pi \in \Zi: N(\pi)=p\le B \text{ and } r \text{ is a quadratic residue mod } \pi \text{ and } \overline{\pi}\}.$$
By Chebotarev's density theorem, the proportion of integer primes that split completely in a Galois extension of degree $n$ is $1/n$, thus the latter is asymptotic to $C\cdot \#\{\pi\in \Zi:N(\pi)=p\le B\}$ for some constant $C$. It follows that $$D(\{\pi \text{ prime in } \Zi: \pi \text{ divides } O_f(t)\})=0$$ as desired.
\end{proof}

\begin{remark}
    The hypothesis that $b_n^\pm(c)=i(a_{n-1}\pm \sqrt{a_n})$ is non-square is strictly weaker than $a_n(c)$ being non-square for $n\ge 2$. When $a_n(c)$ is non-square for all $n\ge 2$, the conclusion follows by (2) of \cite[Theorem 1.1]{orbits}.
\end{remark}

% We also can now complete the case over $\Q$:
% \begin{theorem}
%     Let $b_n^\pm(c)$ be defined as in \ref{bn}. If $c\in \Z_{>1}$ with $c\equiv 1\mod 2$, then $b_n^\pm(c)$ is not a square in $\Zi$  for all $n\ge3$.
% \end{theorem}

% \begin{proof}
%     When $c\in \Z_{>1}$, $a_n(c) \in \Z_{>1}$, so $a_n(c)\in \Zi^2\implies a_n(c)\in \Z^2$. Suppose $a_n(c)$ is a square, or else $b_n^\pm$ is not a square in $\Zi$. In particular, $b_n^\pm (c)=i(a_{n-1}(c)\pm \sqrt{a_n}(c))\in i\Z$. The only squares in $i\Z$, i.e. $\Zi^2\cap i\Z=\{\pm 2x^2 i:x\in \Z\}$: note $(a+ib)^2= a^2-b^2 + 2abi$ so $a=\pm b$ if there is no imaginary part. Since $c^{2^{n-1}-1}= b_n^- b_n^+$, we have a contradiction with $1\equiv 0 \mod 2$.
    
% \end{proof}
\appendix

\bibliographystyle{acm}
\bibliography{sample}

\end{document}